\newtheorem{theorem}{Theorem}
\theoremstyle{plain}
\newtheorem{claim}[theorem]{Claim}
\newtheorem{definition}[theorem]{Definition}
\newtheorem{fact}[theorem]{Fact}
\newtheorem{lemma}[theorem]{Lemma}
\newtheorem{remark}[theorem]{Remark}
\numberwithin{equation}{section}
\numberwithin{theorem}{section}
\def\cA{\mathcal{A}}
\def\cX{\mathcal{X}}
\def\cY{\mathcal{Y}}
\def\lF{\ell_F}
\def\lham{\ell_{\mathrm{ham}}}
\def\ham{\mathrm{ham}}
\def\eps{\varepsilon}
\def \degen{\mathrm{degen}}
\def \ext{\text{ext}}
\def \r{\gamma}
\def\Zf{\mathbb{Z}_f}
\def\NN{\mathbb{N}}
\def\RR{\mathbb{R}}
\def\vecone{\boldsymbol{1}}
\def\vecu{\boldsymbol{u}}
\def\vece{\boldsymbol{e}}
\let\phi=\varphi
\title{Factors and loose Hamilton cycles in sparse pseudo-random hypergraphs}
\date{\today}
\author{Hi\d{\^{e}}p H\`an}
 \author{Jie Han}  
 \author{Patrick Morris}
\thanks{HH: Departamento de  Matem\'atica y Ciencia de la Computaci\'on, Universidad de Santiago de Chile, Las Sophoras 173, Santiago, Chile, {\tt hiep.han@usach.cl}. Research supported by FONDECYT Regular grant 1191838. \\
JH: Department of Mathematics, University of Rhode Island, 5 Lippitt Road, Kingston, RI, USA, 02881, {\tt jie\_han@uri.edu.} Research partially supported by Simons Foundation \#630884.
\\
PM: Institut f\"ur Mathematik, Freie Universit\"at Berlin, Arnimallee 3, 14195 Berlin, Germany and Berlin Mathematical School, Germany, {\tt pm0041@math.fu-berlin.de}. Research supported {in part by a Leverhulme Trust Study Abroad Studentship (SAS-2017-052\textbackslash 9) and by the Deutsche Forschungsgemeinschaft (DFG, German Research
Foundation) under Germany's Excellence Strategy - The Berlin Mathematics
Research Center MATH+ (EXC-2046/1, project ID: 390685689).}}
\date{\today}
\begin{document}

\maketitle

\begin{abstract}
We investigate the emergence of subgraphs in sparse pseudo-random $k$-uniform hypergraphs, using 
the following comparatively weak notion of pseudo-randomness.
A $k$-uniform hypergraph~$H$ on $n$ vertices is called \emph{$(p,\alpha,\eps)$-pseudo-random} if for all (not necessarily disjoint) vertex  subsets $A_1,\dots, A_k{\subseteq} V(H)$
with $|A_1|\cdots |A_k|{\geq}\alpha n^{k}$ we have \[{e(A_1,\dots, A_k)=(1\pm\eps)p |A_1|\cdots |A_k|.}\]
For any linear $k$-uniform $F$ we provide a bound on $\alpha=\alpha(n)$ in terms of $p=p(n)$ and $F$, 
such that (under natural divisibility assumptions on $n$) 
any $k$-uniform $\big(p,\alpha, o(1)\big)$-pseudo-random $n$-vertex hypergraph  $H$  with a mild minimum vertex degree condition
contains an $F$-factor. The approach also enables us to establish the existence of loose Hamilton cycles in sufficiently pseudo-random hypergraphs and, along the way, we also derive conditions which guarantee the appearance of any fixed sized subgraph. All 
results imply corresponding bounds for stronger notions of hypergraph pseudo-randomness such as jumbledness or large spectral gap.    

As a consequence,  $\big(p,\alpha, o(1)\big)$-pseudo-random $k$-graphs as above {contain}: $(i)$ a perfect matching if  $\alpha=o(p^{k})$ and  $(ii)$ a   loose Hamilton cycle  if $\alpha=o(p^{k-1})$. 
 This extends the works of Lenz--Mubayi, and Lenz--Mubayi--Mycroft who studied the analogous problems in the dense setting.

\end{abstract}



\section{Introduction}


Pseudo-random graphs, vaguely speaking, are deterministic graphs which resemble their random counterparts in many characteristic properties.
The systematic study of the topic 
was initiated by  Andrew Thomason~\cite{Thomason1, Thomason2} who
introduced  a variant of
 the following notion of uniform edge distribution.
A graph  $G=(V,E)$  is called \emph{$(p,\beta)$-jumbled} if  for all (not necessarily disjoint) $A,B{\subseteq} V$ we have
\footnote{Throughout the paper we write $x=y\pm z$ to denote that $y-z\leq x\leq y+z$.} 
\begin{equation}\label{eq:defjumbled}
e(A,B):= |\{(a,b)\in A\times B\colon \{a,b\}\in E\}|= p|A||B|\pm \beta \sqrt{|A||B|}. 
\end{equation}
{The definition of} jumbledness captures  how close a graph is to having uniform edge distribution, with the parameter~$\beta$ controlling the discrepancy from this paradigm.  
Further, $\beta$  also controls  the size of subsets  $A,B{\subseteq} V$ for which the lower bound in~\eqref{eq:defjumbled}    
becomes void, namely,  once $|A||B|=o(\beta^2/p^2)$ holds.   
The random graph~$G(n, p)$ is $\big(p, O(\sqrt{pn})\big)$-jumbled almost always, which 
 is essentially optimal since it follows from the proof of Erd\H{os} and Spencer~\cite{erdos1972imbalances} that any graph with edge density, say, $p<0.99$ satisfies $\beta=\Omega(\sqrt{pn})$. 
 






One topic  of great importance and popularity in the area concerns   the appearance of certain subgraphs $F$ in sufficiently pseudo-random {graphs} $G$. 
 Here,~$F$ can be a small, fixed size graph such as a triangle, an odd cycle {or}  a fixed size clique, or it can be a large, indeed spanning subgraph of~$G$ such as a
 perfect matching, a Hamilton cycle, 
 or a $K_r$-factor\footnote{That is, vertex disjoint copies of the $r$-clique $K_r$ covering all the vertices of $G$.}. 
 The fundamental question then concerns the degree of pseudo-randomness 
 which ensures  
 that $F$ is a subgraph of $G$ and we 
distinguish here the (dense) \emph{quasi-random} case, when
 $p=\Omega(1)$ and $\beta=o(n)$, and 
the (sparse) \emph{pseudo-random} case,  when
 $p=o(1)$ and $\beta=\beta(n,p)$ is a function of $n$ and $p$.
Most of the time proofs of results in the latter case can easily be modified to cover the former as well and in this
sense problems concerning pseudo-randomness are typically more difficult than the corresponding ones for quasi-randomness.
For the quasi-random case the subgraph containment problem is well understood~\cite{CGW,komlos1997blow}; for the pseudo-random case, however, it
turned out to be  notoriously difficult  already  for small graphs~$F$, and even more so for spanning subgraphs.  
Thus, while  bounds exist for general graphs $F$ (see e.g.~\cite{KRS, sparseblowup}), only  few are known to be (essentially) best possible: triangles, odd cycles, perfect matchings, Hamilton cycles and 
triangle-factors~\cite{A94,AK,KrivelevichSudakov,Nenadov}.  
For further information on pseudo-random graphs and the related subgraph containment problem we refer the reader to the survey~\cite{KrivelevichSudakov}.

\subsection*{(Linear) pseudo-random hypergraphs}
A $k$-uniform hypergraph, $k$-graph for short,
 is  a pair $H=(V,E)$ with a vertex set $V=V(H)$ and an edge set $E=E(H){\subseteq}\binom{V}k$, where $\binom{V}k$ denotes the set  of all $k$-element 
 subsets of $V$. 
  Launched by Chung and Graham~\cite{chung1990quasi}, the investigation of pseudo-random $k$-graphs is widely popular, albeit mostly restricted to the 
  dense case due to the complexity of the matter   \cite{quasihyper,Chung10,Ch90,Ch91,wquasi,G06,G07,KNRS,KRS02,LenzMubayi_eig,lenz2015poset,RS07a,RS07b,RSk04,Towsner}.
There are several generalisations of~\eqref{eq:defjumbled} to $k$-graphs,
 the simplest and most natural of which   is perhaps the following. 
 A $k$-graph $H=(V,E)$ is called  \emph{$(p,\beta)$-jumbled} if 
 for all (not necessarily  disjoint) $A_1,\dots, A_k{\subseteq} V$ we have
 \begin{equation}\label{eq:defkjumbled}
e(A_1,\dots,A_k)=  p\prod_{i\in[k]}|A_i|\pm \beta {\prod_{i\in[k]}|A_i|}^{1/2}. 
\end{equation}
where $e(A_1,\dots,A_k)$ denotes the number of tuples $(a_1,\dots,a_k)$, $a_i\in A_i$, which form an edge in $H$.

   Analogously to the graph setting, we separate {the} (dense)~\emph{quasi-random} case where
$p=\Omega(1)$ and $\beta= o(n^{k/2})$, a range for which~\eqref{eq:defkjumbled} only provides control over the edge distribution between linear size sets $A_i$, $i\in[k]$,
and the (sparse)~\emph{pseudo-random} case where $p=o(1)$ and $\beta=\beta(n,p)$.
  
As discussed in detail below, questions concerning quasi-random hypergraphs, in particular the emergence of subgraphs therein, have received a lot of attention  in recent years, 
see e.g.~\cite{KNRS, wquasi,LenzMubayi1,LMM,Friedman, FriedmanWigderson,HavTho, LenzMubayi_eig}.
These results about quasirandom hypergraphs required considerable effort and so it is no surprise that much less is known in the 
sparse (pseudo-random) setting.
We refer to~\cite{KMST1,KMST2, FKL, conlon2014green} for results concerning subgraph containment using stronger and more complicated notions of sparse pseudo-randomness, most prominently the work by Conlon, Fox, Zhao~\cite{conlon2014green} reproving the Green-Tao Theorem.
 


\subsubsection*{Quasi-randomness and linear hypergraphs}
The subgraph containment problem for (dense) quasi-random $k$-graphs, $k\geq3$, 
has been an interesting topic and is a good example of how quasi-random hypergraphs can behave in a much more subtle manner than  graphs.
Indeed, with respect to subgraph statistics it is well known~\cite{CGW} that 
the number of labelled copies of any fixed size graph~$F$ in a large
 quasi-random graph with edge density {$p=\Omega(1)$} is roughly as expected from the random graph~$G(n,p)$.
%
For $k\geq 3$, however, R\"odl noted that by a construction of~\cite{ErdosHajnal}, quasi-random $k$-graphs may not contain a single copy of, 
say, a $(k+1)$-clique, let alone the expected number of such copies. 
In contrast and somewhat surprisingly, the  works of Kohayakawa {et al.}~\cite{KNRS} and Conlon {et al.}~\cite{wquasi}  show that quasi-randomness is strongly related, indeed \emph{equivalent},
to the counting property of \emph{linear} $k$-graphs,
those in which any  two edges  intersect in at most one vertex. 
More precisely, a sequence of  $k$-graphs~$(H_n)_{n\to\infty}$, $n=|V(H_n)|$, is $(p,o(n^{k/2}))$-jumbled (aka quasi-random)
if and only if $H_n$ contains $p^{e_F}n^{v_F}+o(n^{v_F})$  labelled copies of {any} linear $k$-graph $F$.
Due to this reason,  quasi-randomness for $k$-graphs is often referred to as \emph{linear quasi-randomness}.
Much research has looked into stronger notions of uniform edge distribution and stronger counting properties. Their relations
have only been clarified  recently and we refer to~\cite{Towsner} (see also~\cite{quasihyper}) for further information. 



\medskip

With regards to spanning subgraphs in quasi-random $k$-graphs (with a mild minimum vertex degree), the situation also turned out to be more complex. Indeed, for graphs the famous \
Blow-up Lemma~\cite{komlos1997blow} of Koml\'os, S\'ark\"ozy and Szemer\'edi implies that a quasi-random graph with linear minimum  degree contains any bounded-degree spanning subgraph. For $k$-graphs, no such universal statement is known but some natural spanning linear  subgraphs have been studied. 
Lenz and Mubayi~\cite{LenzMubayi1} and Lenz, Mubayi, and Mycroft~\cite{LMM} investigated the existence of 
perfect matchings,~$F$-factors for linear~$F$, and  loose Hamilton cycles.
As usual an \emph{$F$-factor} in a $k$-graph $H$ is a collection of vertex disjoint copies of $F$ in $H$ which {cover} all of $V(H)$. 
Furthermore,
a $k$-uniform \emph{loose cycle} is a $k$-graph whose 
vertices can be cyclically ordered in such a way that each of its edges consists of~$k$ consecutive vertices, and  each edge intersects the subsequent edge 
(where the edge ordering is inherited by the {ordering} of the vertices) in exactly one vertex.
We say that the $k$-graph $H$ contains a \emph{loose Hamilton cycle} if it contains a loose cycle on $|V(H)|$ vertices
 as a 
subgraph.

For a $k$-graph $H$ and sets $U_1,\dots, U_{k-1} {\subseteq} V(H)$, let $\deg(v;U_1,\dots, U_{k-1})=e(\{v\},U_1,\dots, U_{k-1})$ denote the \emph{degree of $v\in V(H)$ in $(U_1,\dots, U_{k-1})$}.
When $U_i=U$ for all $i\in[k-1]$ let\footnote{Recall that we define $e(A_1,\ldots, A_k)$ to count labelled edges and so here any edge containing $v$ and vertices of $U$ is counted $(k-1)!$ times in $\deg(v;U)$.}   $\deg(v;U)=\deg(v;U_1,\dots, U_{k-1})$ and define  the  \emph{minimum vertex degree} of $H$ by
 $\delta(H)=\min_{v\in V(H)}\deg(v;V(H))$.
Lenz and Mubayi showed the following concerning  factors in quasi-random $k$-graphs .
\begin{theorem}[\cite{LenzMubayi1}]\label{thm:densecaseF}
For all  $k\geq 2$,  $0<c,p<1$ and all linear $k$-graphs $F$ on $v_F$ vertices there exists an $n_0$ and an $\eps>0$ such that the following holds.
If  $H$ is a $(p,\eps n^{k/2})$-jumbled  $k$-graph on $n\in v_F\mathbb N$ vertices such that $n>n_0$, and $H$ has  minimum vertex degree $\delta(H)>c n^{k-1}$,
then $H$ contains an $F$-factor. \qed
\end{theorem}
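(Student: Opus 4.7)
The plan is to apply the absorbing method of R\"odl, Ruci\'nski and Szemer\'edi. The argument decomposes into three phases: building an absorbing set $A \subseteq V(H)$, finding an almost-perfect $F$-packing in $H-A$, and using $A$ to swallow the remaining leftover into a complete $F$-factor. Throughout, the workhorse is the counting characterisation of quasi-randomness flagged in the excerpt: since $H$ is $(p,\eps n^{k/2})$-jumbled with $\eps$ small, for every fixed linear $k$-graph $F'$ on $v'$ vertices with $e'$ edges, $H$ contains $(1\pm o(1))p^{e'}n^{v'}$ labelled copies of $F'$, and the analogous count holds with boundedly many vertices of $F'$ pinned to prescribed linear-size subsets of $V(H)$.

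For the absorbing phase, for each $v_F$-set $T\subseteq V(H)$ I would define an \emph{$F$-absorber} for $T$ to be a bounded-size set $A_T\subseteq V(H)\setminus T$ such that both $H[A_T]$ and $H[A_T\cup T]$ admit $F$-factors. A natural gadget takes $A_T$ to consist of two vertex-disjoint copies of an auxiliary linear $k$-graph $F^\star$, designed so that $F^\star\cup T$ has a second decomposition into copies of $F$. Encoding "two such copies attached to $T$" as a single linear $k$-graph $G_T$ and invoking the counting lemma yields $\Omega(n^{|A_T|})$ absorbers for every $T$; the minimum degree hypothesis $\delta(H)>cn^{k-1}$ is used to prevent the rooted count at the vertices of $T$ from degenerating. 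A standard random-selection argument (include each vertex in $A$ independently with small probability, then discard overlapping or defective gadgets) then produces an absorbing set $A$ of size $o(n)$ with the property that every $L\subseteq V(H)\setminus A$ of size at most $n^{1/2}$ with $v_F\mid |L|$ satisfies: $H[A\cup L]$ has an $F$-factor.

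For the near-cover, I would pass to the auxiliary hypergraph whose vertices are $V(H)\setminus A$ and whose edges are vertex sets of copies of $F$ in $H-A$. Applying the counting lemma to $F$ and to "$F$ plus an overlapping edge" gives the near-regularity and codegree bounds required by the Pippenger--Spencer theorem (or equivalently a R\"odl nibble), yielding an $F$-matching that covers all but $o(n)$ vertices. The resulting leftover $L$ has $|L|=o(n)$ and $v_F\mid |L|$ by the divisibility on $n$, so the absorbing property of $A$ produces an $F$-factor on $A\cup L$, completing the $F$-factor of $H$.

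The main obstacle is Step~2: constructing an absorber gadget $F^\star$ that is itself a \emph{linear} $k$-graph and that encodes the "switching" behaviour which lets $T$ be either covered from within or bypassed. For simple $F$ (e.g.\ loose cycles, single edges) this is routine, but a uniform construction valid for arbitrary linear $F$ demands some care, since two $F$-decompositions must overlap in a prescribed way while keeping the union linear. Linearity is essential: only for linear auxiliary hypergraphs does $(p,\eps n^{k/2})$-jumbledness deliver the sharp count that drives the $\Omega(n^{|A_T|})$ lower bound on absorbers per $T$, and without this the probabilistic construction of $A$ collapses.
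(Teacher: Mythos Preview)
This theorem is quoted from Lenz--Mubayi with a \qed\ and is not proved in the paper; what the paper does prove is the sparse generalisation, Theorem~\ref{thm:Ftilings}, which specialises to Theorem~\ref{thm:densecaseF} when $p$ is constant. So the relevant comparison is between your classical absorbing scheme and the paper's proof of Theorem~\ref{thm:Ftilings}.

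Your outline is sound and is close in spirit to the original Lenz--Mubayi argument: random-selection absorbing plus a nibble for the near-cover. The paper, by contrast, uses Montgomery's \emph{template} method (Definition~\ref{templatedefn}, Lemma~\ref{lem:template}): rather than selecting absorbers randomly and relying on concentration to guarantee many absorbers per $v_F$-set, it places one absorber per edge of a bounded-degree template $T$ whose flexible set provides the switching. This buys them control in the sparse regime $p=o(1)$, where the random-selection route would demand $\Omega(n^{|A_T|})$ absorbers per $T$ and hence a much stronger pseudo-randomness condition than the paper aims for. In the dense case your approach works, but it does not immediately scale to the sparse statement the paper actually proves.

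On your ``main obstacle'': the paper resolves it explicitly. Lemma~\ref{lem:AF} constructs, for any linear $k$-graph $F$ on $f$ vertices, a linear absorber $(A_F,\cX)$ on $f^2$ vertices with $f$ roots, by placing copies of $F$ on the rows and on the shifted diagonals of a $\Zf\times\Zf$ grid; the row-factor and the diagonal-factor give the two required $F$-factors, and linearity is immediate from the grid structure. So the gadget you were worried about exists uniformly for all linear $F$, and you could plug it directly into your scheme. One minor inconsistency to fix: you promise absorption of any leftover of size at most $n^{1/2}$, but your nibble only produces a leftover of size $o(n)$; either strengthen the absorbing set to handle leftovers of size $\eta n$ for small $\eta$, or (as the paper does in Claim~\ref{claim:3}) greedily reduce the leftover below the absorption threshold before invoking the absorbing property.
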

The result of Lenz, Mubayi and Mycroft concerning loose Hamilton cycles reads as follows.
\begin{theorem}[\cite{LMM}]\label{thm:densecaseHC}
For all  $k\geq 2$ and  $0<c,p<1$,  there exists an $n_0$ and an $\eps>0$ such that the following holds.
If  $H$ is a $(p,\eps n^{k/2})$-jumbled  $k$-graph on $n\in (k-1)\mathbb N$ vertices such that $n>n_0$, and  $H$ has  minimum vertex degree $\delta(H)>c n^{k-1}$,  
then $H$ contains  a loose Hamilton cycle.\qed
\end{theorem}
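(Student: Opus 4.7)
The plan is to apply the absorbing method in the quasi-random regime, where $p = \Omega(1)$ makes the hypothesis $\beta \leq \varepsilon n^{k/2}$ a linear quasi-randomness condition. The first ingredient I would assemble is a counting/embedding lemma for linear $k$-graphs: combining $(p, \varepsilon n^{k/2})$-jumbledness with the results of Kohayakawa--Nagle--R\"odl--Schacht~\cite{KNRS} and Conlon et al.~\cite{wquasi}, the number of labelled copies of any fixed linear $k$-graph $F$ in $H$ is $(1 \pm o_\varepsilon(1)) p^{e_F} n^{v_F}$, together with a partial-embedding version that lets one extend a fixed partial copy of $F$ into almost all tuples of remaining vertices.

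Write $\ell = k-1$. Step one is the construction of an \emph{absorbing path} $A$. For every vertex $v \in V(H)$, I would first exhibit a constant-size \emph{absorber} $L_v$: a loose path with fixed endpoints that admits an alternative loose-path spanning structure on $V(L_v) \cup \{v\}$ having the same endpoints. Such absorbers are themselves linear $k$-graphs, so the counting lemma applied inside the link of $v$ produces polynomially many rooted copies of $L_v$ for each $v$. A standard random selection with alterations (in the spirit of R\"odl--Ruci\'nski--Szemer\'edi) then yields a family of pairwise disjoint absorbers such that each $v$ retains polynomially many absorbers dedicated to it. Linking these absorbers by short loose connecting paths, whose existence follows from the minimum-degree hypothesis combined with the counting lemma, produces a single loose path $A$ of length $o(n)$ with the property that, for any $R \subseteq V(H) \setminus V(A)$ with $|R| = o(|A|)$ and $|R| \equiv 0 \pmod{\ell}$, the path $A$ may be rerouted to a loose path on $V(A) \cup R$ with the same endpoints.

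Step two is to cover almost all remaining vertices. I would set aside a small random reservoir $R_0 \subseteq V(H) \setminus V(A)$; with positive probability $R_0$ inherits $(p, 2\varepsilon n^{k/2})$-jumbledness and a minimum-degree condition of the expected order, and has the further property that almost any two admissible loose-path endpoints in $V(H) \setminus R_0$ can be joined by a short loose path through $R_0$. In $H' := H - V(A) - R_0$, which is still quasi-random with $\delta(H') = (c - o(1))n^{k-1}$, greedily build a loose path $P$ by iteratively extending through a typical $(k-1)$-tuple at the current endpoint. Jumbledness plus the vertex-degree lower bound guarantees $\Omega(n)$ extension choices at each step until only $o(n)$ vertices remain uncovered, and the initial parameters are chosen so that the residue of the leftover set modulo $\ell$ is zero. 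To close up, connect an endpoint of $P$ to an endpoint of $A$, and vice versa, via short loose paths through $R_0$, then invoke the absorbing property of $A$ to swallow the $o(n)$ unused vertices, producing a loose Hamilton cycle.

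The main obstacle, as usual with absorbing arguments, lies in the simultaneous construction of the absorbing path and the reservoir, together with the verification that every vertex retains enough absorbers and connectors even after the small random subsets are removed and that the divisibility constraints modulo $\ell$ line up after each stage. The minimum-degree hypothesis is indispensable precisely here: it rules out the possibility that a handful of vertices are starved of extensions or absorbers, a local pathology that global jumbledness alone cannot preclude, thereby ensuring both the greedy extension in the long-path stage and the successful linking of absorbers in the absorbing-path and close-up stages.
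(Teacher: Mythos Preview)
This theorem is quoted from~\cite{LMM} with a \qed{} and is not proved directly in the paper; rather, the paper establishes the stronger sparse result Theorem~\ref{thm:hamcyc}, which specialises to Theorem~\ref{thm:densecaseHC} when $p$ is a constant. So ``the paper's own proof'' here is the machinery of Sections~\ref{sec:embeddingsubgraphs}--\ref{sec:proofthms} applied in the dense regime.

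Your sketch is correct for the dense statement and is essentially the original Lenz--Mubayi--Mycroft route: single-vertex path absorbers, random selection with deletions \`a la R\"odl--Ruci\'nski--Szemer\'edi, a reservoir for connections, and a greedy long-path phase followed by absorption of leftovers. (One small imprecision: you should not literally work ``inside the link of $v$'', since linear quasi-randomness of $H$ says nothing about its links; what you need is a \emph{rooted} counting lemma for linear $k$-graphs with $v$ pinned, which does follow from KNRS-type counting together with the minimum degree of $v$.)

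The paper takes a genuinely different absorbing route. Instead of single-vertex absorbers and random selection, it uses Montgomery's absorber--template method: it builds an explicit $(k-1)$-rooted path absorber of edge degeneracy $k-1$ (Lemma~\ref{lem:AHC3}), proves a sparse rooted counting lemma (Lemma~\ref{lem:rootcount}), and then greedily places one rooted absorber on each edge of a bounded-degree template $T$ (Lemma~\ref{lem:absorber} with Lemma~\ref{lem:template}) to obtain a $T$-compatible family whose union is the absorbing set. The flexible set $Z$ inherits its absorbing power from the template's matching robustness, replacing the ``each vertex has many private absorbers'' guarantee entirely.

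What each approach buys: your RRS-style argument is simpler and perfectly adequate when $p=\Omega(1)$, because each vertex then has $\Theta(n^{c})$ absorbers for some fixed $c>0$, so random selection survives. The template method is what the paper actually needs, since in the sparse regime $p=o(1)$ the number of absorbers through a given vertex is only $p^{O(1)}n^{c}$, which is too small for the RRS selection/deletion step to succeed; the template method sidesteps this by requiring only \emph{one} rooted absorber per template edge, and the careful degeneracy bound $k-1$ on the absorber is exactly what drives the pseudo-randomness hypothesis $\alpha=\eps p^{k-1}$ in Theorem~\ref{thm:hamcyc}.
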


We note here that the results in~\cite{LenzMubayi1} and~\cite{LMM} are actually slightly stronger than stated and  apply to $k$-graphs satisfying only the lower bound of the edge count in \eqref{eq:defkjumbled}.

\subsubsection*{Sparse pseudo-random hypergraphs}
In the sparse regime (aka the pseudo-random case) the notion~\eqref{eq:defkjumbled} has been studied by Haviland and Thomason~\cite{HavTho} and later by Friedman~\cite{Friedman} and Friedman, Wigderson~\cite{FriedmanWigderson} in the context of hypergraph spectral gap.
Concerning the emergence of spanning subgraphs in pseudo-random $k$-graphs 
we note that  there is a wealth of literature 
on this topic for graphs, i.e., when $k=2$ (see, e.g.,~\cite{kHC,sparseblowup,HKMP18,han2019finding,KrivelevichSudakov,krivelevich2004triangle,Nenadov}). 
For higher uniformity, Krivelevich, Frieze and Loh~\cite{FKL} used a stronger and more complicated notion of pseudo-randomness 
to study  packing of tight Hamilton cycles.
The only result concerning~\eqref{eq:defkjumbled} we are aware of  is due to Lenz and Mubayi~\cite{LenzMubayi1} who studied perfect matchings in sparse pseudo-random $3$-graphs.  
Their result is stated in terms of {hypergraph} eigenvalues, a notion originated in the work of Friedman and Wigderson~\cite{Friedman, FriedmanWigderson} for regular  $k$-graphs and extended 
to all  $k$-graphs by Lenz and Mubayi in~\cite{LenzMubayi_eig}.
Given a $k$-graph $H=(V,E)$  (possibly with loops) on the vertex set $[n]=\{1,\dots,n\}$ and  the standard basis 
$\vece_1,\dots,\vece_n$ of $W=\RR^n$,  define the $k$-linear form $\tau_H:W^k\to\mathbb R$ by
\[\tau_H(\vece_{i_1},\vece_{i_2},\dots,\vece_{i_k})=\begin{cases}1&\text{if } \{i_1,\dots ,i_k\}\in E,\\ 0&\text{otherwise.}\end{cases}\]
By multi-linearity this thus determines $\tau_H$.  Let $J$ denote the all-one $k$-linear form, i.e., the form defined as above, but with 
$J(\vece_{i_1},\dots,\vece_{i_k})=1$ for all $i_1,\dots ,i_k\in[n]$.
Let $\|\cdot\|$ denote the Euclidean 2-norm on $\RR^n$ and for a $k$-linear form $\phi\colon W^k\to\RR$ let its spectral norm be defined as
\[\|\phi\|=\sup_{\|\vecu_1\|=\dots=\|\vecu_k\|=1}|\phi(\vecu_1,\dots,\vecu_k)|\] Then the \emph{first} and the \emph{second eigenvalue} of $H$ are defined as
\[\lambda_1(H)=\|\tau_H\| \qquad \text{and}\qquad \lambda(H)=\left\|\tau_H-\frac{k!e(H)}{n^k}J\right\|.\]
It was shown that $k$-graphs (\cite{FriedmanWigderson} for regular and~\cite{LenzMubayi_eig} for all $H$) with second eigenvalue $\lambda=\lambda(H)$
are $( p,\lambda)$-jumbled with $p=\frac{k! e(H)}{n^k}$.
The following is  the  result by Lenz and Mubayi, which ours  will improve upon in several  aspects, see Theorem~\ref{cor:matchings} and the discussion thereafter. 

\begin{theorem}[\cite{LenzMubayi1}]\label{thm:sparsecasematching1}
For all  $0<c<1$ there exists an $n_0$ and an $\eps>0$ such that the following holds for all $n<n_0$.
Let  $H$ be a  $3$-graph on $n\in 3\mathbb N$ vertices, with edge density $p:=\frac{6|E(H)|}{n^3}$,  
minimum co-degree\footnote{That is, every pair of vertices in $V(H)$ is contained in at least $\delta_2(H)$ edges.} 
$\delta_2(H)>c pn^{2}$ and second eigenvalue \[\lambda(H)\leq \eps p^{16}n^{3/2}.\]
Then $H$ contains a perfect matching. \qed
\end{theorem}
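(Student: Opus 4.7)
The plan is to apply the absorbing method in three stages. First I would convert the eigenvalue hypothesis into a usable jumbledness statement: by the Friedman--Wigderson/Lenz--Mubayi connection, $\lambda(H)\leq \eps p^{16}n^{3/2}$ implies that $H$ is $(p,\eps p^{16}n^{3/2})$-jumbled. Applied to \eqref{eq:defkjumbled}, this gives $e(A_1,A_2,A_3)=(1\pm o(1))\,p\,|A_1||A_2||A_3|$ for any triples of vertex sets whose size-product is sufficiently large, e.g.\ $|A_i|\geq p^{10}n$. The very generous exponent $p^{16}$ leaves ample room for several nested applications of this estimate.

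Next I would build an absorbing matching $M_a$ of size $o(n)$ that can ``swallow'' any sufficiently small leftover vertex set~$L$ of size divisible by~$3$. The natural gadget in the $3$-uniform setting is a \emph{$T$-absorber} for a triple $T=\{u,v,w\}$: a $6$-set $\{x_1,\dots,x_6\}$ disjoint from $T$ such that $\{x_1,x_2,x_3\}$, $\{x_4,x_5,x_6\}$, $\{u,x_1,x_4\}$, $\{v,x_2,x_5\}$ and $\{w,x_3,x_6\}$ are all edges of $H$. The ``old'' pair of edges on $\{x_1,\dots,x_6\}$ covers the gadget before absorption, while the three ``new'' edges together cover $\{u,v,w,x_1,\dots,x_6\}$ after absorption. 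Using the codegree hypothesis together with iterated jumbledness, I would show that every triple $T$ has $\Omega(p^{5}n^{6})$ absorbers. A standard probabilistic selection (pick each vertex into a random set with carefully chosen probability, apply Chernoff/Janson to control both the number of absorbers per triple and the size of the selected set, and then delete a small number of offending elements) produces a small vertex set $V_a$ of size $o(n)$ whose induced hypergraph contains both a perfect matching $M_a$ and many disjoint $T$-absorbers for every triple $T\subseteq V(H)\setminus V_a$.

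For the almost-perfect matching on $V(H)\setminus V(M_a)$ I would run an iterative edge-removal / semi-random nibble argument. At each step the remaining sub-hypergraph still inherits the jumbledness estimate, so as long as $\Omega(n)$ vertices are uncovered it contains many edges with roughly balanced vertex degrees; this allows one to extract a matching $M'$ leaving only an $o(n)$-size leftover~$L$. Since $|V(H)|$, $|V(M_a)|$ and $3|M'|$ are all divisible by $3$, so is $|L|$; partitioning $L$ arbitrarily into triples and absorbing each triple into $M_a$ via its pre-arranged absorber produces a perfect matching of $H$.

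The main obstacle, I expect, is the absorber-counting step. Each absorber requires five edges, and each additional edge constraint in the count is enforced via the jumbledness inequality, which propagates a multiplicative error of order $\beta/(p\sqrt{|A_1||A_2||A_3|})$. After five nested applications, and with the subsequent need for a uniform union bound over all $\Omega(n^{3})$ triples~$T$ in the probabilistic selection, one is forced into a condition of the shape $\beta \ll p^{c}n^{3/2}$ for a fairly large constant $c$; the exponent $16$ in the hypothesis is precisely what makes this error-propagation manageable. Balancing these parameters so that the final Chernoff/Janson union bound still succeeds is the technical heart of the argument.
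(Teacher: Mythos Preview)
This theorem is not proven in the paper: it is quoted from \cite{LenzMubayi1} (note the citation in the theorem header and the \qedsymbol\ terminating the statement), so there is no ``paper's own proof'' to compare against. The paper's contribution is Theorem~\ref{cor:matchings}, which \emph{supersedes} Theorem~\ref{thm:sparsecasematching1} by weakening the eigenvalue condition from $\lambda(H)=o(p^{16}n^{3/2})$ to $\lambda(H)=o(p^{5/2}n^{3/2})$ and relaxing the codegree hypothesis to a vertex-degree hypothesis.

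Your sketch is a reasonable outline of a classical absorbing argument and is, in spirit, what one expects the original Lenz--Mubayi proof to look like: per-triple six-vertex absorbers, a random selection with Chernoff/Janson plus cleanup, and a greedy almost-cover. That scheme inherently needs a union bound over all $\Theta(n^3)$ triples, which is exactly why a large exponent like $p^{16}$ is forced. By contrast, the present paper avoids the per-triple union bound entirely: it uses Montgomery's template (Lemma~\ref{lem:template}) together with a rooted counting lemma (Lemma~\ref{lem:rootcount}) and a greedy embedding of a $T$-compatible family (Lemma~\ref{lem:absorber}). The absorbing structure is built deterministically on a linear-size template rather than selected randomly, and the flexibility comes from the template's matching property rather than from having many absorbers for \emph{every} triple. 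This is what buys the drop from exponent $16$ to $5/2$ (equivalently $\alpha=o(p^k)$ in the $(p,\alpha,\eps)$-pseudo-random formulation).

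So: your proposal is not wrong as a proof strategy for the cited result, but it is not what this paper does, and it cannot reach the paper's improved bound.
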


\subsection*{Main results} Our results {rely} on the  following  weaker version of~\eqref{eq:defkjumbled}. 
Given a $k$-graph $H$ and $p,\alpha,\eps\in [0,1]$ we say that 
 $H$ is  \emph{$(p,\alpha,\eps)$-pseudo-random} if for all (not necessarily disjoint) 
 subsets $A_1,\dots, A_k{\subseteq} V(H)$  with
$|A_1|\cdots |A_k|\geq \alpha |V|^k$ we have
\begin{equation}\label{eq:defpseudo}
e(A_1,\dots,A_k)=(1\pm\eps)p\cdot|A_1|\cdots |A_k|. 
\end{equation}
Our reasons to use this notion of pseudo-randomness are twofold. Firstly, it  is weaker than~\eqref{eq:defkjumbled}  and the notion of spectral gap studied by Friedman and Wigderson~\cite{Friedman,FriedmanWigderson} and Lenz and Mubayi~\cite{LenzMubayi_eig}. Indeed, 
note that for any $\eps>0$,  
a $(p,\beta)$-jumbled $k$-graph (and hence $k$-graphs  with density $p$ and second eigenvalue $\lambda(H)\leq \beta$) is 
$(p,\alpha,\eps)$-pseudo-random with $\alpha= \frac{\beta^2} {\eps^2 p^2 n^k}$. 
Secondly, we believe that this notion is the most natural for this work and free from unnecessary intricacies.  Indeed, 
in what follows  it will be sufficient to have $\eps>0$, which controls the variation from the average density, to be a sufficiently small constant. The parameter $\alpha=\alpha(n,p)$ then controls the  size of the vertex sets for which the condition is non-trivial. 
When investigating subgraphs of  pseudo-random (hyper-)graphs, this is often the parameter which is at the crux of the proofs, forcing the degree of pseudo-randomness necessary.

Regarding the emergence of a perfect matching in sufficiently pseudo-random hypergraphs we show the following.
\begin{theorem}\label{cor:matchings}
For all integers $k\geq 3$ and $c>0$ there is an $\eps>0$ and an $n_0$ such that for any $n>n_0${,}
any $(p,\eps p^k,\eps)$-pseudo-random $k$-graph on $n\in k\mathbb N$ vertices with $\delta(H)\geq c pn^{k-1}$ contains a perfect matching. 

In particular, this holds if $H$ satisfies the minimum degree condition and is $(p, \beta)$-jumbled or is of density $p$ and has second eigenvalue 
$\lambda(H)\leq \beta$ with $\beta<\eps p^{k/2+1}n^{k/2}$. 
\end{theorem}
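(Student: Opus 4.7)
The plan is to obtain Theorem~\ref{cor:matchings} as an immediate corollary of the main $F$-factor result announced in the abstract, by taking $F$ to be the single-edge $k$-graph on $k$ vertices. A perfect matching in $H$ is, by definition, an $F$-factor for this choice of $F$, and this one-edge hypergraph is vacuously linear (the condition that two distinct edges intersect in at most one vertex is empty). The divisibility assumption $n \in k\mathbb{N}$ corresponds to the natural divisibility requirement $v_F \mid n$ needed for an $F$-factor to exist. The quantitative hypothesis $\alpha = \eps p^k$ matches the threshold $\alpha = o(p^k)$ for perfect matchings explicitly announced in the abstract, which is exactly what is required once $\eps$ is chosen as a sufficiently small constant, and the minimum vertex degree hypothesis $\delta(H) \geq c p n^{k-1}$ is precisely the mild minimum degree condition demanded by the general factor theorem. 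Thus the first statement should follow as a direct application, with no additional work needed beyond the main result of the paper.

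For the ``in particular'' part, I would invoke the comparison between the three notions of pseudo-randomness recorded just above the theorem statement. A $(p,\beta)$-jumbled $k$-graph is $(p, \alpha, \eps)$-pseudo-random with $\alpha = \beta^2/(\eps^2 p^2 n^k)$. Substituting the assumed bound $\beta \leq \eps' p^{k/2+1} n^{k/2}$ yields $\alpha \leq (\eps')^2 p^k/\eps^2$, and so choosing $\eps' = \eps^{3/2}$ gives $\alpha \leq \eps p^k$, which places $H$ within the hypothesis of the first part of the theorem. The spectral case then reduces immediately to the jumbledness case, because density $p$ together with second eigenvalue $\lambda(H) \leq \beta$ implies $(p,\beta)$-jumbledness by the results of Friedman--Wigderson and Lenz--Mubayi cited above.

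Consequently there is no separate obstacle in the perfect matching case: all substantive work is absorbed into the main $F$-factor theorem, whose proof (the real engine here) must supply the pseudo-randomness threshold $\alpha = o(p^k)$ and handle the mild minimum degree hypothesis. The content of Theorem~\ref{cor:matchings} is precisely what that general theorem yields when $F$ is specialised to a single edge, together with the routine translation from the stronger notions of pseudo-randomness.
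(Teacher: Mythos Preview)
Your proposal is correct and matches the paper's approach exactly: the paper states that Theorem~\ref{cor:matchings} is a direct consequence of Theorem~\ref{thm:Ftilings} by taking $F$ to be a single edge, noting that then $\degen(F)=\Delta'(F)=0$ so that $\ell=\degen(F)+\Delta'(F)+k=k$, and the ``in particular'' part follows from the recorded implication that $(p,\beta)$-jumbledness (and hence the spectral condition) yields $(p,\beta^2/(\eps^2 p^2 n^k),\eps)$-pseudo-randomness. The only refinement worth making explicit is the computation $\ell=k$ from the formula in Theorem~\ref{thm:Ftilings}, rather than appealing to the abstract.
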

Theorem~\ref{cor:matchings} is a direct consequence of Theorem~\ref{thm:Ftilings} from below which addresses the existence of $F$-factors for a linear $F$.
Theorem~\ref{cor:matchings} improves upon Theorem~\ref{thm:sparsecasematching1} for $3$-graphs, and extends it   
 to $k$-graphs. 
 Indeed, Theorem~\ref{thm:sparsecasematching1} relies on the stronger notion of pseudo-randomness
coming from the second eigenvalue of hypergraphs and requires that $\lambda =o(p^{16}n^{3/2})$, whilst ours only requires  $\lambda(H)=o(p^{5/2}n^{3/2})$. 
Moreover,  our result only relies on minimum vertex degree, whilst  Theorem~\ref{thm:sparsecasematching1} requires minimum co-degree condition, concretely, that all pairs of vertices in $H$ are contained in $\Omega(pn)$ edges, 
which  is a rather strong restriction in general.
Indeed, while  pseudo-randomness implies that most vertices have high degree, making    
the minimum vertex degree the natural one to consider in this context,
 it is easy to construct pseudo-random hypergraphs 
 with a substantial proportion of pairs of vertices having co-degree zero.  

Our approach also covers the case of loose Hamilton cycles 
and thus extends Theorem~\ref{thm:densecaseHC} to sparse pseudo-random $k$-graphs as follows.
\begin{theorem}\label{thm:hamcyc}
For any given integer $k\geq 3$ and  $c>0$ there is an $\eps>0$ and an $n_0$ such that for every $n>n_0$ the following holds.
Suppose that 
$H$ is a  $(p,\eps p^{k-1},\eps)$-pseudo-random $k$-graph on $n\in (k-1)\mathbb N$  vertices with $\delta(H)\geq c pn^{k-1}$.
Then $H$ contains a loose Hamilton cycle.

In particular, this holds if $H$ satisfies the minimum degree condition and is $(p, \beta)$-jumbled or is of density $p$ and has second eigenvalue { $\lambda(H) \leq \beta$ with $\beta<\eps p^{(k+1)/2}n^{k/2}$. }
\end{theorem}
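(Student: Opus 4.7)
The plan is to apply the absorption framework, reducing the problem to an application of the $F$-factor result Theorem~\ref{thm:Ftilings} with $F$ a short loose path. The strategy has five ingredients: (a) an \emph{absorbing loose path} $P_{\text{abs}}$ of sublinear length that can incorporate any sufficiently small leftover vertex set while keeping its two endpoints fixed; (b) a \emph{reservoir} $R \subseteq V(H) \setminus V(P_{\text{abs}})$ of linear size with flexible linking properties; (c) a near-perfect tiling of the remainder by constant-size loose paths, via Theorem~\ref{thm:Ftilings}; (d) cyclic linking of these tiles and $P_{\text{abs}}$ through $R$; and (e) absorption of the unused reservoir vertices into $P_{\text{abs}}$ to close up the loose Hamilton cycle.

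In more detail, for (a) I would define an \emph{absorber} for a vertex $v$ to be a pair of short loose paths sharing both endpoints whose vertex sets differ exactly by $\{v\}$. Counting such absorbers rooted at $v$ via iterated applications of~\eqref{eq:defpseudo}, using $\delta(H) \geq cpn^{k-1}$ at each step to stay inside the regime where the pseudo-random lower bound is meaningful, yields polynomially many gadgets per vertex. A Chernoff-style sparsification then selects a small family covering every vertex many times, and these are concatenated via short pseudo-random connectors into a single loose path $P_{\text{abs}}$ of length $o(n)$. Step~(b) is also probabilistic: a uniformly random $R$ of size $\eta n$ for small $\eta$ typically has the property that any two endpoint configurations of constant size in $V(H) \setminus R$ can be joined by many internally disjoint short loose paths using internal vertices in $R$. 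For step~(c) I apply Theorem~\ref{thm:Ftilings} to the subhypergraph $H' = H \setminus (V(P_{\text{abs}}) \cup R)$, which inherits the relevant hypotheses with slightly worse constants, taking $F$ to be the loose path on $1 + t(k-1)$ vertices for a constant $t$ chosen so that the divisibility condition is met. In step~(d) I repeatedly consume a constant number of reservoir vertices to join consecutive tiles and the two ends of $P_{\text{abs}}$ into a single loose cycle; since only $O(n/t)$ joins are performed the reservoir suffices. Finally (e) invokes the absorbing property of $P_{\text{abs}}$ on the set of reservoir vertices not used for linking.

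The main obstacle is the construction of the absorbing path in~(a), together with the verification that every step can be carried out under the relatively weak pseudo-randomness $\alpha = \eps p^{k-1}$. Indeed, the lower bound in~\eqref{eq:defpseudo} is only available when $|A_1| \cdots |A_k| \geq \eps p^{k-1} n^k$, so every intermediate neighbourhood used to extend a path or a gadget must be kept of size $\Omega(pn)$. This forces the absorber gadgets to be built iteratively, invoking the minimum-degree hypothesis at each extension, and requires the reservoir to be shown—via a concentration argument—to retain pseudo-random edge counts relative to much smaller queried tuples. Once the absorber and reservoir are in place with the correct quantitative properties, the near-spanning tiling and the linking and absorption stages follow by standard pseudo-random arguments.
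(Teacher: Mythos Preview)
Your outline follows the classical R\"odl--Ruci\'nski--Szemer\'edi absorption scheme, but there are two genuine gaps that prevent it from going through under the hypothesis $\alpha=\eps p^{k-1}$.

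First, step~(c) is a non-starter. Theorem~\ref{thm:Ftilings} applied with $F$ a loose path of length $t\geq 2$ requires $(p,\eps p^{\ell},\eps)$-pseudo-randomness with $\ell=\degen(F)+\Delta'(F)+k\geq 1+1+k=k+2$ (indeed $\ell=k+3$ once $t\geq 3$), and even for a single edge one needs $\ell=k$. The hypothesis of Theorem~\ref{thm:hamcyc} only gives $\alpha=\eps p^{k-1}$, which is strictly weaker, so you cannot invoke Theorem~\ref{thm:Ftilings} here at all. The paper does \emph{not} reduce to the factor result; instead it builds a single long path greedily (via Claim~\ref{claim:1} and a maximality argument in Claim~\ref{claim:3}), which only needs to extend one edge at a time and therefore only ever requires degeneracy control up to $k-1$.

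Second, the absorbing step~(a) as you describe it will not survive at $\alpha=\eps p^{k-1}$. The natural single-vertex absorber (two loose paths with the same endpoints differing in $\{v\}$) has edge degeneracy $k$ in the sense of Section~\ref{sec:embeddingsubgraphs}; this is exactly what the paper remarks about the absorbers in~\cite{LMM}. Counting such gadgets via Lemma~\ref{lem:rootcount} then needs $\alpha=O(p^k)$, not $O(p^{k-1})$. Moreover, even if you could count gadgets, the Chernoff sparsification you sketch is precisely what fails in the sparse regime: with $p=o(1)$ the number of absorbers per vertex is a vanishing fraction of the total, and one cannot simultaneously sample densely enough for per-vertex concentration and sparsely enough to make the family near-disjoint. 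This is the reason the paper adopts Montgomery's template method (Lemma~\ref{lem:template}) together with a bespoke path absorber of edge degeneracy exactly $k-1$ (Lemma~\ref{lem:AHC3}); the template replaces the random selection, and the degeneracy-$(k-1)$ absorber is what makes $\alpha=\eps p^{k-1}$ suffice. Both of these are essential and neither appears in your plan.
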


Finally we generalize Theorem~\ref{cor:matchings} and address the appearance of $F$-factors  in sufficiently pseudo-random $k$-graphs for any constant size linear $k$-graph $F$.
It should be noted that even the appearance  of a single copy of $F$ has not been explicitly studied for the sparse 
  case, neither for~\eqref{eq:defkjumbled} nor for~\eqref{eq:defpseudo}. Nevertheless, we  will show 
that the argument by Kohayakawa {et al.}~\cite{KNRS} for (dense) quasi-random $k$-graphs can be extended to the sparse range, i.e., 
that sufficiently pseudo-random $k$-graphs contain the ``expected'' number of copies of any constant size, linear $k$-graph~$F$. 

But which degree of pseudo-randomness is sufficient for a given linear $k$-graph $F$? 
Which may be the crucial parameter(s) of $F$ which determine(s)/affect(s) the pseudo-randomness needed?
Even in the case of graphs the former question is notoriously difficult  and wide open, being  resolved for a handful of specific graphs $F$ only. 
While upper bounds  on the pseudo-randomness required do exist and are believed to be tight in many cases, 
matching lower bound constructions are rare and difficult.
A common
strategy to find a copy of $F$ in pseudo-random graphs relies on sequential 
embedding of its vertices, see e.g.~\cite{KRS, KRSSS, sparseblowup}. Hence
the (vertex) degeneracy of~$F$, i.e.,  $d_F=\max \{\delta (F')\colon F'\subset F\}$, the largest minimum degree over all induced subgraphs of~$F$, %
is usually a crucial parameter in determining the pseudo-randomness required.
We refer to~\cite{KRS, KRSSS, sparseblowup} for some results in similar settings in graphs which involve the vertex degeneracy (or its trivial 
upper bound, the maximum degree) or related notions of degeneracies~\cite{CFZ,ABSS}.

Unfortunately, vertex embedding strategies  fail when dealing with higher uniformities and this is probably one of the main reasons why 
the notion of linear quasi-randomness remained 
obscure for a long time. 
As it turns out, sequential edge embedding is a suitable strategy in this setting, making   
the following notion of edge degeneracy a natural parameter to consider.  

Briefly speaking the \emph{edge degeneracy} of a linear $k$-graph~$F$ is simply the vertex degeneracy of its line graph, i.e.,
the graph  on the vertex set $E(F)$ in which two distinct vertices $e,e'\in E(F)$ are connected if they intersect.
Let us expand on this notion!
For an edge $e$ in~$F$  we denote by  $\deg(e)$ its degree in the line graph of $F$.
Thus $\deg(e)=\sum_{v\in e} (\deg_{F}(v)-1)$  which is 
equal to the
number of edges $e'\neq e$ in~$F$ which intersect~$e$.
Let $\delta' (F)=\min_{e\in F}  \deg(e)$  and $\Delta'(F)=\max_{e\in F}  \deg(e)$ denote the minimum and the maximum edge-degree of~$F$, respectively.  Similar to the vertex degeneracy, the {edge degeneracy} of~$F$ can be defined as
 \[\degen(F)=\max\left\{\delta'(F')\colon F'\subset F, V(F')=V(F)\right\},\]
the largest minimum edge-degree taken over all subgraphs $F'\subset F$ on the vertex set $V(F)$.
There are  further similarities between this notion of degeneracy and the vertex degeneracy.
For example,  it is easily seen that $\degen(F)$ is the minimum~$d$ for which there is an ordering $e_1,\dots, e_s$ of 
the edges of~$F$~($s=|E(F)|$) such that each $e_i$ has  edge-degree at most $d$ in the spanning subgraph of $F$ induced by 
the edges $e_1,\dots, e_{i}$.
Moreover, the edge degeneracy is at most the maximum edge-degree, $\degen(F)\leq \Delta'(F)$.


As a consequence of {one of our auxiliary results, Lemma \ref{lem:rootcount},} we  obtain the following concerning the appearance of a single copy of a fixed size linear $k$-graph. 
\begin{theorem} \label{prop:small subgraph appearence}
 For any linear $k$-graph $F$, there exists an $\eps>0$ and an $n_0$ such that for every $n>n_0$, any $n$-vertex $\big(p,\eps p^{\degen(F)},\eps \big)$-pseudo-random $k$-graph $H$ contains a copy of  $F$. 
 
 In particular, the same conclusion holds if $H$ is $(p, \beta)$-jumbled or is of density $p$ and has second eigenvalue 
 $\lambda(H)\leq \beta$ with $\beta<\eps p^{\degen(F)/2+1}n^{k/2}$. 
\end{theorem}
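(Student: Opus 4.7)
The plan is to count labelled homomorphisms of $F$ into $H$ by sequentially embedding the edges of $F$ in an order that witnesses $d := \degen(F)$. Fix an ordering $e_1,\dots,e_s$ of $E(F)$ such that for each $i$, the edge $e_i$ has edge-degree at most $d$ in the spanning subgraph $F_i := F[\{e_1,\dots,e_i\}]$. Let $V_i := V(F_i)$, let $d_i := |e_i \cap V_{i-1}|$ be the number of vertices of $e_i$ already present after step $i-1$, and set $a_i := k - d_i$. Since $F$ is linear, each of the $d_i$ ``old'' vertices of $e_i$ comes from a distinct preceding edge, so $d_i$ is at most the edge-degree of $e_i$ in $F_i$, and in particular $d_i \leq d$ throughout.

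The goal is to show, by induction on $i$, that $T_i=(1\pm\eta_i)\,p^{\,i}\,n^{|V_i|}$, where $T_i$ denotes the number of labelled homomorphisms $\phi\colon V_i\to V(H)$ preserving $e_1,\dots,e_i$ and $\eta_i$ is a small error depending only on $\eps$ and $F$. Since the number of non-injective homomorphisms is $O_F(n^{v(F)-1})$ --- of lower order compared to $T_s$ in the density regime where the hypothesis is non-vacuous --- the case $i=s$ yields an injective copy of $F$ in $H$.

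The inductive step is exactly where Lemma~\ref{lem:rootcount} does the heavy lifting. Group partial embeddings $\phi_i$ counted by $T_i$ according to the image $y := \phi_i(e_{i+1}\cap V_i) \in V(H)^{d_{i+1}}$ of the interface with $e_{i+1}$, and let $M_i(y)$ denote the number of $\phi_i$'s with that interface image. Then
\begin{equation*}
T_{i+1}=\sum_{y\in V(H)^{d_{i+1}}} M_i(y)\cdot\deg_H(y),
\end{equation*}
where $\deg_H(y)$ counts the $a_{i+1}$-tuples in $V(H)^{a_{i+1}}$ completing $y$ to an edge of $H$. I read Lemma~\ref{lem:rootcount} as supplying root-uniformity of this sum: up to a negligible exceptional set of $y$'s, the value $M_i(y)$ is essentially $T_i/n^{d_{i+1}}$. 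Combined with the straightforward identity $\sum_y \deg_H(y)=e(V(H),\dots,V(H))=(1\pm\eps)\,p\,n^k$, which comes from one invocation of~\eqref{eq:defpseudo} with $A_1=\cdots=A_k=V(H)$, this gives $T_{i+1}\approx T_i\cdot p\cdot n^{a_{i+1}}$ and closes the induction.

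The principal obstacle is therefore Lemma~\ref{lem:rootcount} itself. The naive approach --- fix the roots as singletons and apply~\eqref{eq:defpseudo} directly --- fails, since the resulting product of set sizes, $1^{d_{i+1}}\cdot n^{a_{i+1}}$, typically falls below the pseudo-randomness threshold $\alpha n^k=\eps p^d\,n^k$. The lemma must instead be proved by an argument that only invokes~\eqref{eq:defpseudo} on genuinely large candidate sets, for instance via a nested induction in which at each stage one tracks, for every unplaced vertex of $F$, a shrinking but still linearly-sized candidate set and averages over root images in aggregate rather than one at a time. The exponent $\degen(F)$ enters precisely because it bounds the maximum number of vertices that must ever be simultaneously rooted during any sequential embedding of $F$, and so dictates the sparsest regime $\alpha=\eps p^{\degen(F)}$ for which pseudo-randomness continues to be exploitable throughout the entire inductive process.
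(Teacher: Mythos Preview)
Your proposal correctly locates the work in Lemma~\ref{lem:rootcount}, but it both misuses and fails to prove that lemma. The theorem is simply the $r=0$ case of Lemma~\ref{lem:rootcount} (see Remark~\ref{rem:tightcount}): with no roots, the lemma already yields $\tfrac12 p^{e(F)}n^{v(F)}$ labelled copies of $F$ directly. Your outer induction, invoking the lemma at each step to estimate $M_i(y)$, is therefore superfluous --- and in any case does not go through as written, since to apply the lemma to $M_i(y)$ you must root $F_i$ at the interface $e_{i+1}\cap V_i$, whereas the paper's definition of a rooted $k$-graph requires edges through distinct roots to be pairwise \emph{disjoint}; two interface vertices can easily lie in earlier edges $e_a,e_b$ with $e_a\cap e_b\neq\emptyset$ (linearity only forbids $|e_a\cap e_b|\ge 2$), so this hypothesis may fail. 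Your reading of $\degen(F)$ as ``the maximum number of vertices ever simultaneously rooted'' is also off the mark: in the $r=0$ application nothing is ever rooted.

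The real gap is that you do not prove Lemma~\ref{lem:rootcount}, and your sketch does not match the paper's mechanism. The paper does not maintain per-vertex candidate sets. Its inductive step removes the last edge $e_q$ to form $F'$ on the same vertex set, writes the $F$-count as $p\cdot\#\{F'\}$ plus an error $\sum_{T\cong F'}(\mathbf{1}_H(K_T)-p)$, and then --- the key idea --- deletes \emph{all} vertices of $e_q$ to form $F_*$. For each copy $T_*$ of $F_*$ the completions to $F'$ are parametrised by a product $W_1\times\cdots\times W_k$ of neighbourhood sets, so the error becomes $\sum_{T_*}\bigl|e(W_1,\dots,W_k)-p\prod_j|W_j|\bigr|$, and pseudo-randomness is applied to these $k$ large sets at once, never to a singleton. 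The exponent $\degen(F)$ enters because the edge-weight $w_q\le\degen(F)$ governs the typical scale $p^{w_q}|U|^k$ of $\prod_j|W_j|$, which is exactly the threshold at which~\eqref{eq:defpseudo} bites; a dyadic split on $|\mathrm{ext}(T_*)|$ together with the induction hypothesis applied to both $F'$ and $F_*$ then closes the bound. Your proposal does not supply this mechanism.
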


We will in fact show that one has  roughly the correct number of copies of $F$ under the same pseudo-random conditions. 
The pseudo-randomness condition in Theorem~\ref{prop:small subgraph appearence} is known to be tight for graph triangles and conjectured to be tight for graph cliques and therefore cannot be improved in general. 
Indeed, for graph triangles we have $\degen(K_3)=2$  and 
by Alon's construction~\cite{A94} there are $K_3$-free {$n$-vertex $d$-regular graphs with $d=\Omega(n^{2/3})$, which are $(d/n,\lambda)$-jumbled with  $\lambda=O(n^{1/3})$.}

Moving on to the appearance of $F$-factors in sufficiently pseudo-random $k$-graphs {our} result  establishes the following. 
\begin{theorem}\label{thm:Ftilings}
For given integers $f\geq k\geq 2$ and  $c>0$ there is an $\eps>0$ and an $n_0$ such that for every $n>n_0$ the following holds.
Let $F$ be  a linear $k$-graph on $f$ vertices and let   \[\ell:=\degen(F)+\Delta'(F)+k.\]
Suppose 
$H$ is a  $(p,\eps p^{\ell},\eps)$-pseudo-random $k$-graph  on $n\in f\mathbb N$ vertices with $\delta(H)\geq c pn^{k-1}$. 
Then there is an  $F$-factor of $H$. 

In particular, this holds if $H$ satisfies the minimum degree condition and is $(p, \beta)$-jumbled or is of density $p$ and has second eigenvalue { $\lambda(H)\leq \beta$ with $\beta<\eps p^{\ell/2+1}n^{k/2}$. }
\end{theorem}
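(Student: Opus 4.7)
\medskip

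My plan is to deploy the \emph{absorbing method}, splitting the construction of an $F$-factor into three steps: (i)~set aside a small \emph{absorbing} set $A \subseteq V(H)$ with the property that for every sufficiently small leftover set $L \subseteq V(H) \setminus A$ with $|A|+|L| \in f\mathbb{N}$, the induced sub-hypergraph $H[A \cup L]$ admits an $F$-factor; (ii)~find an \emph{almost-$F$-factor} in $H - A$ whose uncovered set $L$ is of size $o(|A|)$; (iii)~use the absorbing property of $A$ to complete the factor on $A \cup L$. Since Theorem~\ref{cor:matchings} is just the $F = K_k^{(k)}$ case of this statement, the bound $\ell = \degen(F) + \Delta'(F) + k$ must account for all three steps simultaneously.

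\medskip

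For the almost-tiling step I would run a greedy/random removal process: pick copies of $F$ one by one in $H - A$, delete their vertices, and repeat until the uncovered set is small. The key input is the counting version of Theorem~\ref{prop:small subgraph appearence} (the referenced Lemma~\ref{lem:rootcount}), applied to the sub-hypergraph on the still-uncovered vertices. A standard inheritance observation shows that $(p,\eps p^{\ell},\eps)$-pseudo-randomness together with a linear minimum-degree hypothesis is robust under deletion of a $o(1)$-fraction of vertices, with the parameters only mildly worsened; so copies of $F$ can be found throughout the process and, moreover, in roughly the expected count. This step alone only consumes the $\degen(F)$-portion of $\ell$.

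\medskip

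The heart of the argument, and the step dictating the exponent~$\ell$, is the construction of $A$. I intend to follow the Rödl–Ruciński–Szemerédi / Montgomery-style template: for every vertex $v \in V(H)$ first produce a large family $\mathcal{A}_v$ of \emph{local absorbers}, i.e.\ pairs $(F_1,F_2)$ of copies of $F$ in $H$ satisfying $V(F_1) = (V(F_2)\setminus\{u\}) \cup \{v\}$ for some $u \neq v$; then assemble a linear-in-$n$ collection of disjoint local absorbers into $A$ by a probabilistic selection (Chernoff/Janson bounds applied to the random subset of absorbers chosen) followed by a bipartite-matching clean-up that assigns to every potential leftover vertex its own absorbing gadget. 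Counting $v$-absorbers reduces to counting copies of $F$ rooted at a given vertex together with a further linear extension that swaps in $v$; this is precisely the kind of rooted extension count furnished by Lemma~\ref{lem:rootcount}, and the exponent needed is exactly $\degen(F)+\Delta'(F)+k$, where $\degen(F)$ pays for the base copy, $\Delta'(F)$ for the edges incident to the ``swap'' vertex, and the additive~$k$ for the extra rooting incurred by the swap edge itself.

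\medskip

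The principal obstacle will be organizing this absorber bookkeeping uniformly for an \emph{arbitrary} linear $k$-graph $F$: choosing an absorbing gadget whose degeneracy profile matches $\ell$, certifying via Lemma~\ref{lem:rootcount} that the rooted counts behave as in the random model, and ensuring that the clean-up matching step goes through despite possible vertex-pairs of low codegree in $H$ (which Theorem~\ref{cor:matchings} already had to handle, unlike Theorem~\ref{thm:sparsecasematching1}). Once these counts are in place, the probabilistic assembly of $A$ and the final absorption of $L$ follow the now-standard pattern, the only subtlety being to carry out the random selection under \eqref{eq:defpseudo} rather than under the stronger \eqref{eq:defkjumbled}, which is harmless since we only ever count within vertex sets of size $\Omega(p^{\ell}n)$.
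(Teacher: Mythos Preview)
Your three–step outline (absorbing set, greedy almost-factor, absorb the leftover) matches the paper's architecture, but the absorbing step as you describe it has a concrete gap. Your ``local absorber'' is a pair $(F_1,F_2)$ with $V(F_1)=(V(F_2)\setminus\{u\})\cup\{v\}$, so the two copies share $f-1$ vertices. The union $F_1\cup F_2$ is then \emph{not linear}: already when $F$ is a single edge, the edges $e_1\ni v$ and $e_2\ni u$ meet in $k-1\ge 2$ vertices, and the same happens for any edge of $F$ through the swapped vertex. Lemma~\ref{lem:rootcount}, however, only counts copies of \emph{linear} $k$-graphs---its entire edge-by-edge induction relies on the fact that removing the last edge $e_q$ leaves a graph in which each remaining edge meets $e_q$ in at most one vertex, so that the extension sets $W_1,\dots,W_k$ are well defined. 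So you cannot invoke Lemma~\ref{lem:rootcount} on your swap gadget, and the accompanying degeneracy accounting (``$\degen(F)$ for the base copy, $\Delta'(F)$ for the swap vertex, $k$ for the swap edge'') is therefore unsupported.

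A second, related issue: your gadget is a \emph{switcher} (it exchanges $u$ for $v$), not an absorber---the set $V(F_1)\cup V(F_2)$ has $f+1$ vertices and admits no $F$-factor either with or without $v$. Converting switchers into genuine absorbers requires either Lo--Markstr\"om reachability chains or a Montgomery template; you mention ``Montgomery-style template'' but then describe the classical R\"odl--Ruci\'nski--Szemer\'edi random selection, which is a different mechanism and in the sparse pseudo-random regime typically demands stronger hypotheses.

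The paper resolves both problems at once by building an explicit linear absorber $(A_F,\cX_F)$ with $f$ roots (Lemma~\ref{lem:AF}): place $f$ disjoint row-copies $F_0,\dots,F_{f-1}$ of $F$ on the grid $\Zf\times\Zf$ and $f-1$ diagonal copies $F^1,\dots,F^{f-1}$, taking the main diagonal as the roots. This $A_F$ is linear by construction, the rows give the complete $F$-factor, the diagonals give the internal one, and a direct edge-exposure computation yields $\degen(A_F,\cX_F)\le\degen(F)+\Delta'(F)+k=\ell$. The absorbing set is then assembled deterministically via Lemma~\ref{lem:absorber}, embedding a $T$-compatible family of copies of $A_F$ where $T$ is an $(f,m)$-template; no probabilistic selection is used.
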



Note that in $(p,\eps p^{\ell},\eps)$-pseudo-random $k$-graphs, with $\ell=\degen(F)$ we obtain a copy of $F$ by Theorem~\ref{prop:small subgraph appearence}, and with $\ell=\degen(F)+\Delta'(F)+k$ we get an $F$-factor by Theorem~\ref{thm:Ftilings}.
Recall that $\degen(F)\leq \Delta'(F)$,  thus $\ell\leq 2\Delta'(F)+k$  which may serve as
a simple upper bound on $\ell$.
By taking~$F$ to be a single edge we have $\Delta'(F)=0$, thus Theorem~\ref{thm:Ftilings} implies Theorem~\ref{cor:matchings} concerning perfect matchings. 
In general, Theorem~\ref{thm:Ftilings} extends Theorem~\ref{thm:densecaseF} from dense quasi-random $k$-graphs to sparse pseudo-random $k$-graphs and establishes the first explicit  condition  on  pseudo-randomness that guarantees the existence of a  factor.

It is difficult to comment on the tightness of our general results. 
Indeed, at this stage it is not even clear what to conjecture as the threshold for the appearance of subgraphs of pseudo-random hypergraphs. 
For graphs it is  believed that  the obstruction for finding an $F$-factor in a pseudo-random graph is in fact the appearance of a single copy of $F$. 
That is, essentially the same pseudo-random conditions that guarantee a single copy of $F$ actually guarantee an $F$-factor. 
This intuition is confirmed only for the case $F=K_3$  where there still remains a $\log$ factor between the upper and lower bounds~\cite{A94,Nenadov} and in general 
finding $F$-free pseudo-random graphs is a very challenging problem, see e.g.~\cite{bishnoi2020construction}. 

One may conjecture that the same phenomenon occurs in hypergraphs, i.e., that the threshold for the appearance of a single copy of a linear $k$-graph $F$
and that for the appearance of $F$-factors coincide. In this respect Theorem \ref{prop:small subgraph appearence} provides conditions for the appearance of a fixed linear hypergraph and so may serve as a benchmark for future work. 

Any further results providing constructions or  conditions for finding general subgraphs, even for constant sized subgraphs of hypergraphs, 
would be very interesting. 
Given the generality of Theorem \ref{prop:small subgraph appearence}  and also Theorem \ref{thm:Ftilings}, 
it will not too hard to
 to improve on the 
conditions for particular linear $k$-graphs $F$.


\subsection{Proof overview and organisation}\label{sec:outline}
As mentioned above, Theorem~\ref{cor:matchings} is a consequence of Theorem \ref{thm:Ftilings}. The proofs of Theorem \ref{thm:Ftilings} and Theorem \ref{thm:hamcyc} work by \emph{absorption}, {a method popularised by R\"odl, Ruci\'nski and Szemer\'edi, see e.g. \cite{rodl2006dirac}. Both proofs follow the same scheme and so
 we will deal with them together.  In} the following we give a brief outline, ignoring some technical details. 

The main step is to show that  a sufficiently pseudo-random $H=(V,E)$ 
contains an \emph{absorbing} set $A\subset V$ as follows: there is a \emph{flexible} set $Z\subset A$ and an integer $m=\Omega(n)$
so that
\begin{description}
\item[$F$-factors] for \emph{any} $Z'\subset Z$ of size $m$
 the induced $k$-graph $H[A\setminus Z']$  contains an $F$-factor, 
 \item[Ham-cyc] for \emph{any} $Z'\subset Z$ of size $m$ the induced $k$-graph  $H[A\setminus Z']$  contains  
a spanning loose  path with some fixed end vertices $a_1$ and $a_2$ independent of~$Z'$.
\end{description}
{Considering}  the remaining vertices $V\setminus A$ and with the flexibility in mind,  we find some $Z'\subset Z$ of size $m$ so that 
 $H\big[(V\setminus A)\cup Z'\big]$ can be covered with disjoint $F$-copies or a spanning loose path with end vertices $a_1$ and~$a_2$, respectively.
The flexibility property of $Z$ then implies that $H$ contains
 an $F$-factor or a loose Hamilton cycle, respectively. 

The absorbing set $A$ is obtained as the vertex set  of an
 absorbing structure $\cA$ in $H$, which is a family of  copies of  special $k$-graphs called \emph{absorbers}. These absorbers and
  their properties can be found in Section~\ref{sec:absorber}, see Lemma~\ref{lem:AF} for $F$-factors and Lemma~\ref{lem:AHC3} for loose Hamilton cycles. 
To ensure the absorbing property {of} $A=V(\cA)${,}  
the copies in~$\cA$ are not disjoint but overlap according to a certain prescribed structure called {a} \emph{template} (see~Lemma~\ref{lem:template}), a concept introduced by Montgomery~\cite{M14a,montgomery2019spanning}. 
This  approach requires that we deal with \emph{rooted} copies of  absorbers, i.e., copies in which the overlapped (aka root) vertices are pre-embedded.
In Lemma~\ref{lem:rootcount}  from Section~\ref{sec:embeddingsubgraphs} we show how to find such rooted copies in sufficiently pseudo-random hosts and in
Lemma~\ref{lem:absorber} we show how to put many of them together while controlling the intersection structure of the root vertices. 
When the intersection structure is a suitable  template 
this lemma yields the absorbing structure $\cA$, but it will also be useful in other steps of the proof.

In Section \ref{sec:proofthms} we prove Theorem \ref{thm:Ftilings} and Theorem \ref{thm:hamcyc} both together. 
We close the section with the following. 

\subsection*{Notation and  properties of pseudo-random hypergraphs}

Throughout the paper we omit floor and ceiling signs where they do not affect the arguments.
Further, we write $\alpha\ll \beta\ll \gamma$ to mean that it is possible to choose the positive constants $\alpha, \beta,\gamma$ from right
to left. More precisely, there are increasing functions $f$ and $g$ such that, given $\gamma$, whenever we choose some
$\beta<f(\gamma)$ and $\alpha<g(\beta)$, the subsequent statements hold. Hierarchies of other lengths are defined similarly.

As previously mentioned, we write $x=y\pm z$ to denote that $y-z\leq x\leq y+z$. We also write equations such as $y_1\pm z_1=y_2\pm z_2$ which means that  $y_1+z_1\leq y_2+z_2$ and $y_1-z_1\geq y_2-z_2$. Thus if $x=y_1\pm z_1$ and $y_1\pm z_1=y_2\pm z_2$ then $x=y_2\pm z_2$.  More involved equations using $\pm$ should all be read the same; if we turn all the $\pm$ signs to $+$, then the equation holds if we replace all the $=$ signs  with  $\leq$ and if we  turn all the $\pm$ signs to $-$, the equations hold with $\geq$ signs replacing the $=$ signs. 
Next, we collect some easy consequences of the definition of pseudo-randomness  \eqref{eq:defpseudo}. 

\begin{fact}\label{fact:subset}
Given a  $(p,\alpha,\eps)$-pseudo-random hypergraph $H$ and $U,V_1, \dots, V_k{\subseteq} V(H)$;
\begin{itemize} \item For any $\gamma>0$, if  $|U| \geq \gamma^{\frac{1}{k}} |V|$, 
then $H[U]$ is $(p,\alpha/\gamma ,\eps)$-pseudo-random. 
\item If $|V_1|\cdots |V_k|<\alpha |V|^k$, then
\[\begin{aligned}\label{eq:smallUi}|e(V_1,\dots,V_k)-p\cdot|V_1|\cdots |V_k||\leq (1+\eps)p\cdot \alpha |V|^k.\end{aligned}\]
\end{itemize}

\end{fact}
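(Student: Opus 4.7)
Both parts follow by direct application of the pseudo-randomness condition~\eqref{eq:defpseudo} combined with the obvious monotonicity $e(V_1,\dots,V_k)\leq e(V_1',\dots,V_k')$ whenever $V_i\subseteq V_i'$ for each~$i$. The plan is to treat the two bullets in turn; each is essentially a one-step reduction to~\eqref{eq:defpseudo}.

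\textbf{First item.} Given $A_1,\dots,A_k\subseteq U$ with $|A_1|\cdots|A_k|\geq (\alpha/\gamma)|U|^k$, the hypothesis $|U|\geq\gamma^{1/k}|V|$ multiplies up to yield
\[
|A_1|\cdots|A_k|\geq \frac{\alpha}{\gamma}|U|^k\geq \alpha|V|^k,
\]
which is precisely the threshold for the $(p,\alpha,\eps)$-pseudo-randomness of $H$ to apply to $A_1,\dots,A_k$. Since each $A_i\subseteq U$, the count $e(A_1,\dots,A_k)$ is the same whether taken in $H$ or in $H[U]$, so the conclusion $e_{H[U]}(A_1,\dots,A_k)=(1\pm\eps)p|A_1|\cdots|A_k|$ follows immediately, establishing that $H[U]$ is $(p,\alpha/\gamma,\eps)$-pseudo-random.

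\textbf{Second item.} Here the plan is \emph{minimal enlargement}: construct $V_i'$ with $V_i\subseteq V_i'\subseteq V(H)$ such that $|V_1'|\cdots|V_k'|$ just reaches the threshold $\alpha|V|^k$. Because the product starts below $\alpha|V|^k$ and can be raised up to $|V|^k\geq \alpha|V|^k$ by enlarging the $V_i'$ coordinate by coordinate, such an enlargement exists (up to a harmless integer-rounding correction). Applying~\eqref{eq:defpseudo} to $V_1',\dots,V_k'$ gives $e(V_1',\dots,V_k')\leq (1+\eps)p\alpha|V|^k$, and monotonicity then bounds $e(V_1,\dots,V_k)\leq (1+\eps)p\alpha|V|^k$ as well. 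Since the assumption $|V_1|\cdots|V_k|<\alpha|V|^k$ also gives $p|V_1|\cdots|V_k|\leq p\alpha|V|^k$, both quantities appearing in the difference $e(V_1,\dots,V_k)-p|V_1|\cdots|V_k|$ lie in the interval $[0,(1+\eps)p\alpha|V|^k]$, and so the absolute value of their difference is at most $(1+\eps)p\alpha|V|^k$, as required.

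The only genuine technicality is the integer-rounding step in the minimal enlargement — the product may overshoot $\alpha|V|^k$ by at most a single coordinate's worth — but this contributes at most a lower-order error that is absorbed by the $(1+\eps)$ factor. Otherwise, no step presents any real obstacle; both parts are essentially definitional consequences of~\eqref{eq:defpseudo} together with monotonicity.
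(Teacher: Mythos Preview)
Your proof is correct and follows essentially the same approach as the paper: the first item is unpacked directly from the definition, and for the second item you enlarge the $V_i$ to sets whose product of sizes meets the pseudo-randomness threshold, then use monotonicity of $e(\cdot)$ together with the trivial bound $p|V_1|\cdots|V_k|<p\alpha|V|^k$. You are in fact slightly more careful than the paper in flagging the integer-rounding issue in the enlargement step, which the paper glosses over.
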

\begin{proof}
The first property follows directly from the definition. For the second
we extend each $V_i$ to a $U_i\subset V$ so that 
$|U_1|\cdots|U_k|=\alpha |V|^k$. Then pseudo-randomness yields
\[0\leq e(V_1,\dots, V_k)\leq e(U_1,\dots, U_k)\leq (1+\eps) p\cdot\alpha |V|^k,\]
and the second property follows when  $e(V_1,\dots, V_k)\geq  p\cdot|V_1|\cdots |V_k| $. When $p\cdot|V_1|\cdots |V_k|>  e(V_1,\dots, V_k)$ the second property is immediate. 
\end{proof}
Our next lemma shows that a pseudo-random hypergraph cannot be too sparse. 
\begin{lemma} \label{lem:lowerboundonp}
Let $\eps>0$ and $H$ be $(p,\eps p^\ell,\eps')$-pseudo-random $k$-graph such that $\eps'\leq 1/2$. Then $p=\Omega(n^{-s})$, where $s:=\frac{k(k-1)}{\ell(k-1)+k}$. In particular, if $\ell\geq k-1$, then $p = \omega(\log n/n)$. 
\end{lemma}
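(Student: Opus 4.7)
The plan is to combine the pseudo-randomness hypothesis, which will force the independence number $\alpha(H)$ of $H$ to be small, with a standard Tur\'an-type probabilistic lower bound on $\alpha(H)$, in order to pin $p$ from below.

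\textbf{Step 1 (upper bound on $\alpha(H)$).} Set $m := \lceil \eps^{1/k} p^{\ell/k} n \rceil$, so that $m^k \geq \eps p^\ell n^k$. For any $A \subseteq V(H)$ with $|A| = m$, the pseudo-randomness condition applied with $A_1 = \cdots = A_k = A$ yields $e(A,\dots,A) \geq (1-\eps')pm^k > 0$. Since $e(A,\dots,A) = k!\, e(H[A])$ is a nonnegative multiple of $k!$, it must be at least $k!$, so $e(H[A]) \geq 1$. Hence every $m$-subset of $V(H)$ contains an edge, i.e.\ $\alpha(H) < m$.

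\textbf{Step 2 (lower bound on $\alpha(H)$).} The usual probabilistic deletion argument (pick each vertex of $V(H)$ independently with probability $q \in [0,1]$, then delete one vertex from each induced edge) gives $\alpha(H) \geq qn - q^k e(H)$. Using pseudo-randomness with $A_i = V(H)$ to bound $e(H) \leq 2pn^k/k!$ and optimising at $q \asymp (pn^{k-1})^{-1/(k-1)}$ (valid as long as $pn^{k-1} \gtrsim 1$), we obtain $\alpha(H) \geq c_k\, p^{-1/(k-1)}$ for some constant $c_k > 0$. In the complementary regime $pn^{k-1} \ll 1$, the trivial bound $\alpha(H) \geq n - e(H) \geq n/2$ already contradicts $\alpha(H) < m = o(n)$, ruling out that regime entirely.

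\textbf{Step 3 (combine).} Chaining the two bounds gives $c_k p^{-1/(k-1)} \leq \alpha(H) < m \leq 2\eps^{1/k} p^{\ell/k} n$; rearranging yields $p = \Omega(n^{-s})$ with $s = k(k-1)/(\ell(k-1)+k)$. For the ``in particular'' clause, $\ell \geq k-1$ forces $s \leq k(k-1)/((k-1)^2+k) = k(k-1)/(k^2-k+1) < 1$, and hence $pn/\log n = \Omega(n^{1-s}/\log n) \to \infty$, i.e.\ $p = \omega(\log n/n)$.

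The one conceptual point is Step 1: even though pseudo-randomness in this sparse regime places no direct lower bound on vertex degrees, it still forces every sufficiently large set to contain at least one edge, via the trivial integrality lower bound on the positive quantity $(1-\eps')pm^k$. Once this observation is in hand, the Tur\'an bound closes the argument by a routine computation.
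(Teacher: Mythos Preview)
Your proof is correct and follows essentially the same approach as the paper: combine the pseudo-randomness hypothesis (which forces every set of size roughly $\eps^{1/k}p^{\ell/k}n$ to span an edge) with a Tur\'an-type lower bound on the independence number. The only cosmetic difference is that the paper cites Spencer's hypergraph Tur\'an bound directly to obtain $\alpha(H)\geq c' p^{-1/(k-1)}$, whereas you rederive it via the standard random sampling with deletion argument; the resulting inequality and final computation are the same.
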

%
\begin{proof}
Extending Tur\'an's theorem to $k$-graphs, Spencer~\cite{spencer1971turan} (see also \cite[p.~434]{berge1973graphs}) 
showed that any $k$-graph with average  (vertex-)degree $d$ has an independent set of size $c nd^{-1/(k-1)}$ for some $c=c(k)>0$. 
As $e(H)\leq pn^k+\eps' p n^k \leq 2pn^k$ we infer that $H$ has an independent  set $I$ of size  $c{n}{(2kpn^{k-1})^{-1/(k-1)}}= c' p^{-\frac{1}{k-1}}$ for some $c'=c'(k)>0$. Clearly $e(I,\ldots,I)=0$, yet, if $|I|^k\geq  \eps p^\ell n^k$ then  pseudo-randomness implies  $e(I,\ldots,I)\geq p|I|^k(1-\eps')>0$, which is a contradiction.
Thus, $\eps p^\ell n^k>|I|^k\geq (c'p^{-\frac{1}{k-1}})^k$ which then yields $p=\Omega(n^{-s})$.
\end{proof}

\section{Finding small subgraphs in pseudo-random hypergraphs} \label{sec:embeddingsubgraphs}

A \emph{rooted $k$-graph} is a pair $(F,\cX)$ with a $k$-graph $F$ on a vertex set~$V(F)=\{x_1,\dots,x_r,u_1,\dots, u_f\}$ and the (possibly empty) tuple
$\cX=(x_1,\dots,x_r)$ of specified vertices such that
 for every two vertices $x_i, x_j$ of $\cX$   any  edge   containing $x_i$ is disjoint from any  edge   containing $x_j$. Vertices in $\cX$ are called  \emph{roots} of $(F,\cX)$ (or simply of~$F$).
Our aim is to  find ``rooted copies'' of $(F,\cX)$ in a 
``sufficiently'' pseudo-random~$H$. 

Formally, let~$H$ be a $k$-graph with specified vertices $(y_1,\dots, y_r)=\cY$ and  $U\subseteq V(H)$ a vertex subset. 
A \emph{rooted copy} of $(F,\cX)$ in $(H,\cY,U)$ (or simply of $F$ in $H$) is an (edge preserving) embedding $\phi\colon V(F)\to V(H)$ such that $\phi(x_i)=y_i$ for all $i\in[r]$ and $\phi(u_i)\in U$ 
for all $i\in[f]$.

To deal with rooted $k$-graphs we need to extend our notion of edge degeneracy.
Recall that the edge degeneracy $\degen(F)$, for a linear $k$-graph $F$, is the vertex degeneracy of its line graph. 
Equivalently,  let $\deg(e)=\sum_{v\in e}(\deg(v)-1)$ be the degree of $e$ in the line graph of $F$, which is 
equal to the number of edges $e'\neq e$ in~$F$ which intersect~$e$. 
Then $\degen(F)$ is the minimum~$d$ for which there is an 
 \emph{edge exposure}, i.e.,  a permutation
$\sigma\in S_s$ of the edge set $E(F)=\{e_1,\dots, e_s\}$, such that each $e_{\sigma(i)}$ has  degree at most $d$ in the subgraph of $F$ on the vertex set $V(F)$ and with the edges $e_{\sigma(1)},\dots, e_{\sigma(i)}$. 
For a rooted $(F,\cX)$ we additionally require  from the edge exposure $\sigma$ from above that
all edges containing a root appear before edges not containing any root, i.e., there are no $i>j$ such that $e_{\sigma(i)}$ contains a root and 
$e_{\sigma(j)}$ does not. At times, we will write $\degen(F,\cX)$ to make it clear that we refer to the rooted graph and hence only consider these restricted edge exposures when calculating the degeneracy.  However we will also simply write $\degen(F)$ if $\cX$ is clear from context.

\subsection{A counting lemma for rooted $k$-graphs} \label{sec:counting}
When $H$ is sufficiently pseudo-random with respect to a fixed rooted graph $F$, and 
satisfies certain mild degree conditions, then the following lemma  guarantees many rooted copies of $F$ in $H$.
It is an extension of an argument by Kohayakawa et al.~\cite{KNRS} to the sparse case.  

\begin{lemma}[Rooted counting]\label{lem:rootcount}
For  integers $k, f\geq 2$, $r\geq 0$ and  $1\geq c >0$ there is an $\eps>0$ and an $n_0\in N$ such that the following holds for all $n\geq n_0$.
Let~$\big(F,(x_1,\dots, x_r)\big)$ be a rooted linear $k$-graph  on $r+f$ vertices and with edge degeneracy $\ell$.
Suppose that~$H$ is a $(p,\eps p^\ell,\eps)$-pseudo-random $k$-graph on~$n$ vertices with\footnote{Here and throughout, $\Delta_2(H)$ denotes the maximum $2$-degree in $H$ i.e. $\Delta_2(H):=\max_{u\neq v\in V(H)}|\{e\in E(H):\{u,v\}\in e\}|$.}  $\Delta_2(H)<\eps pn^{k-1}$, $U\subseteq V(H)$ a set of size 
$|U|\geq c n$ and $y_1,\dots, y_r\in V(H)$ vertices, which
satisfy $\deg(y_i;U)\geq cp|U|^{k-1}$ for each $i\in[r]$.
Then there are at least 
\[\frac12(cp)^{e(F)} |U|^f\] rooted copies of $\big(F,(x_1,\dots,x_r)\big)$ in $\big(H,(y_1,\dots,y_r),U\big)$.

\end{lemma}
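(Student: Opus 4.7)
The plan is to count rooted copies of $(F, \cX)$ by sequentially embedding edges in the order given by an edge exposure $\sigma = (e_1, \ldots, e_s)$ witnessing $\degen(F,\cX) = \ell$, placing root-containing edges first. Write $W_i := \cX \cup e_1 \cup \cdots \cup e_i$, $S_i := e_i \cap W_{i-1}$ (the ``seed'' of $e_i$ already embedded before it is processed), and $v_i := |W_i \setminus \cX|$. The degeneracy property, linearity of $F$, and the fact that each edge of $F$ contains at most one root together imply $|S_i| \leq \ell + 1$, so every $e_i$ brings in at least one fresh vertex. Let $P_i$ denote the set of rooted partial embeddings of the subhypergraph $F_i$ into $H$ (sending $x_j \mapsto y_j$, non-roots into $U$, and mapping each $e_j$ with $j \leq i$ to an edge of $H$), and write $N_i := |P_i|$. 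The inductive goal is
\[N_i \geq (1 - O(i\eps))(cp)^i |U|^{v_i},\]
so that at $i = s$ (with a trivial $|U|^{f - v_s}$ factor for any isolated non-root vertices of $F$) one obtains the claimed lower bound.

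For the inductive step, expand
\[N_i = \sum_{\phi \in P_{i-1}} \deg_H\bigl(\phi(S_i); U, \ldots, U\bigr) = \sum_t m(t) \deg_H(t; U, \ldots, U),\]
where $m(t) := |\{\phi \in P_{i-1}: \phi(S_i) = t\}|$. If $|S_i| = 1$ and $e_i$ is a root-edge, the only coordinate of $t$ is forced to some $y_j$ and the hypothesis $\deg(y_j; U) \geq cp|U|^{k-1}$ directly yields $N_i \geq cp|U|^{k-1} N_{i-1}$. For $|S_i| \geq 2$ (with or without a root), pseudo-randomness with threshold $\alpha = \eps p^\ell$ -- valid since $|U|^k \geq c^k n^k \gg \eps p^\ell n^k$ -- applied to the $k - |S_i|$ unfixed coordinates bounds the set $B_i$ of ``bad'' tuples (those with $\deg_H(t; U, \ldots, U) < (1 - \eps) p|U|^{k - |S_i|}$) by $|B_i| \lesssim p^\ell n^{|S_i|}$.

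The main obstacle is controlling $\sum_{t \in B_i} m(t)$, i.e., preventing $P_{i-1}$ from concentrating on bad seed tuples, since the naive estimate $m(t) \leq |U|^{v_{i-1} - |S_i|}$ is too weak. The idea is to exploit the edge structure of $F_{i-1}$: edges $e_j$ ($j < i$) incident to a vertex of $S_i$ force their images to pass through coordinates of $t$, which is controlled at the root coordinate by the degree hypothesis and at every other pair of coordinates by the assumption $\Delta_2(H) < \eps pn^{k-1}$; the remaining ``free'' edges of $F_{i-1}$ are counted by applying the inductive argument one more time. This yields a uniform bound of the form $m(t) \lesssim p^{e(F_{i-1})} |U|^{v_{i-1} - |S_i|}$, so that $\sum_{t \in B_i} m(t) \lesssim p^\ell n^{|S_i|} \cdot p^{e(F_{i-1})} |U|^{v_{i-1} - |S_i|} = O(\eps) \cdot N_{i-1}$. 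Iterating across all $s$ steps and then subtracting the negligible count of non-injective embeddings (collisions between vertices sharing an edge of $F$ are bounded by $\Delta_2(H)/p|U|^{k-1} = O(\eps)$, others by pseudo-randomness applied to smaller auxiliary structures) leaves the claimed $\tfrac12 (cp)^{e(F)} |U|^f$ injective rooted copies of $(F, \cX)$.
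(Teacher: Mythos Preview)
The core gap is at the step where $|S_i|\geq 2$. Your claimed bound $|B_i|\lesssim p^{\ell}n^{|S_i|}$ on the number of ``bad'' seed tuples cannot be extracted from $(p,\eps p^{\ell},\eps)$-pseudo-randomness: that notion only controls $e(A_1,\dots,A_k)$ for products of \emph{large} sets, and says nothing about codegrees of a fixed pair (or larger tuple) of vertices. Indeed, the paper itself remarks that one can build pseudo-random $k$-graphs in which a constant fraction of vertex pairs have codegree zero, so for $|S_i|=2$ one may well have $|B_i|=\Theta(n^2)$ rather than $O(p^{\ell}n^2)$. The companion uniform bound $m(t)\lesssim p^{\,i-1}|U|^{v_{i-1}-|S_i|}$ is also too optimistic: the $\Delta_2(H)$ hypothesis yields at best $m(t)\leq \Delta_2(H)^{c}\cdot(\dots)$, which is off by powers of $n$. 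For instance, for the $3$-uniform loose triangle with one root and $S_3=\{a,d\}$, one only gets $m(t)\leq \Delta_2(H)^2\lesssim \eps^2 p^2 n^4$, not the $p^2|U|^2\sim p^2 n^2$ you need. With both factors failing, the control on $\sum_{t\in B_i} m(t)$ collapses.

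The paper's proof sidesteps this by a different decomposition. Instead of fixing the already-embedded vertices $S_i$ of the new edge and counting extensions through them (a codegree problem), it removes the \emph{entire} vertex set of the last (root-free) edge $e_q$, passing to $F_*=F[V(F)\setminus e_q]$. For each rooted copy $T_*$ of $F_*$ one obtains sets $W_1,\dots,W_k\subseteq U$ (the joint link-neighbourhoods determined by $T_*$ at the $k$ vertices of $e_q$) so that extensions of $T_*$ to $F'=F-e_q$ are exactly the tuples in $\ext(T_*)=W_1\times\cdots\times W_k$, and extensions to $F$ are exactly $e(W_1,\dots,W_k)$. Pseudo-randomness now applies directly to a full $k$-fold product of sets, and one writes $\sum_T \mathbf 1_H(K_T)=p\cdot \#\{T\}+\sum_T(\mathbf 1_H(K_T)-p)$, bounding the error by grouping over $T_*$ and using dyadic buckets on $|\ext(T_*)|$ together with the inductive count for $F'$ and $F_*$. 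Your edge-by-edge scheme naturally leads to a codegree-type extension problem that the hypothesis does not control; reorganising the induction so that the last edge is embedded \emph{as a whole} into a product of link sets is the missing idea.
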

Note that  $\Delta_2(H)<\eps pn^{k-1}$ is a rather weak condition, which moreover can be dropped if $\ell\geq k-1$. Indeed, in this case $p\gg1/n$   by \
Lemma~\ref{lem:lowerboundonp} and thus $\Delta_2(H)\leq n^{k-2}\ll pn^{k-1}$.



\begin{proof}
Given  $k, f,r$ and $c$ we choose $0<1/n_0 \ll \eps \ll \gamma\ll  {1/{f^{2}}}, c$. 
Fix $H$, $U$, and $y_1,\dots, y_r$ satisfying the assumptions of the lemma. 
Without loss of generality we assume that $cp|U|^{k-1}\leq\deg(y_i;U)\leq (1+\eps)cp|U|^{k-1}$ for each $i\in[r]$, by passing to a subgraph of $H$ if necessary. 
For a rooted linear $k$-graph $\big(F,(x_1,\dots, x_r)\big)$ 
let $t_{F}=e(F)-\sum_{i\in[r]}\deg_{F}(x_i)$ denote the number of edges containing no root vertices.
By induction on $t=t_{F}$ we show that for any such $\big(F,( x_1,\dots, x_r)\big)$ on at most $r+f$ vertices and with 
edge degeneracy at most $\ell$, there are

\begin{align}\label{eq:rootcount}
(1\pm{(t+1)}\cdot\gamma)c^{e( F)-t}p^{e( F)}|U|^{v( F)-r}
\end{align}
rooted copies of $\big( F,( x_1,\dots, x_r)\big)$ in $\big(H,(y_1,\dots,y_r),U\big)$.
As $t_{ F}\leq \binom f2$ the lemma then follows by the choice of~$\gamma$. 

Consider first the case $t=0$, i.e., all edges of $F$ contain some root vertex. As
$F$ is linear and edges of $F$  containing different root vertices are disjoint, 
a rooted copy of $F$ is simply a disjoint union of stars with each star centered at some $y_i$. 
The 
 degree conditions   
therefore yield the correct count on the number of rooted copies. 
Indeed, for any $y_i$ and any set $X\subset U$  of at most $r+f$ vertices  
 the number of edges containing $y_i$ and a vertex from $X$ is  
 at most $|X|\Delta_2(H)$.
Thus, for each $i\in[r]$ and each of the  $\deg_F(x_i)$ edges in $F$ containing $x_i$ there are 
$\deg_H(y_i;U)\pm (r+f)\Delta_2(H)=(1\pm\eps)cp|U|^{k-1}\pm (r+f)\eps pn^{k-1}$
ways to choose the image of this edge
to build a copy of $F$. 
With~$I$ denoting the number of isolated vertices of $F$,  the number of rooted copies of $F$ in~$H$ is then
\[(1\pm\gamma/2)\prod_{i\in[r]}\big(cp|U|^{k-1}\big)^{\deg_F(x_i)}\big(|U|\pm (r+f)\big)^{I}= (1\pm\gamma)(cp)^{e(F)}|U|^{v(F)-r},\] 
proving the induction base. 


For the induction step let $t\geq 1$, let
$F$ be a $k$-graph  with $t_F=t$, and let
 $\sigma$ be an edge exposure which certifies the edge degeneracy of $F$. 
Note that the  induction hypothesis applies  to any proper subgraph of $F$ as it has edge degeneracy at most $\ell$
and strictly fewer edges.
 Let $F' = F_{t-1} = (V(F), E(F)\setminus \{e_{q}\})$ where $e_q$ 
is the last edge of $F$ according to the ordering  $\sigma$, i.e., $q:=\sigma\big(t+\sum_{i=1}^r \deg_F(x_i)\big)$. Note that $F'$ is defined on the same vertex set as $F$. For 
 a labelled copy $T$ of~$F'$ in $H$ we denote by $K_{T}$ the $k$-set of vertices of $T$ which corresponds to~$e_{q}$ in $F$. 
Let $\vecone_H: \binom V{k} \rightarrow \{0, 1\}$ be the indicator function of the edge set of $H$. In this notation a copy $T$ of $F'$ in $H$ extends to a copy of $F$ if and only if 
$\vecone_H(K_T)=1$,
consequently, summing over all copies $T$ of $F'$ in $H$  the number of copies of $F$ in~$H$ is
\[
\sum_{T\subseteq H}\vecone_H(K_T) = \sum_{T\subseteq H} (p + \vecone_H(K_T) - p) =\sum_{T\subseteq H} p+\sum_{T\subseteq H} (\vecone_H(K_T) - p).
\]
Noting that $e(F')=e(F)-1$ and $v(F')=v(F)$ the induction hypothesis yields 
\[\sum_{T\subseteq H} p= (1\pm t\cdot\r)c^{e(F)-t}p^{e(F)}|U|^{v(F)-r}\]
and in the following we will give a  bound to the error term $ \sum_{T\subseteq H} (\vecone_H(K_T) - p)$.

Without loss of generality suppose that $e_{q}=\{u_1,\dots,u_k\}$ and let $F_*=F[V(F)\setminus e_{q}]$ be the subhypergraph of $F$ obtained by removing the vertices  
$u_1,\dots,u_k$. 
Due to linearity  any edge in $E(F')\setminus E({F_*})$ intersects ~$e_{q}$ in at most one $u_i$. Hence,
for any copy $T_*$ of $F_*$ there are  sets $W_{i}\subseteq U$, $i\in[k]$, such that any $k$-tuple  $K\in W_1 \times\ldots \times W_k=:\ext(T_*)$ extends $T_*$ to a copy of $F'$. 
{Explicitly},~$W_i$ is the intersection of the neighbourhoods of the $(k-1)$-sets in $T_*$, which are the images of those $(k-1)$-sets in $F_*$ contained in an edge with $u_i$ in $F'$.
(Such a  copy of $F'$ then extends to a copy of $F$ if and only if $K\in E(H)$.) Let $z:=p^{w_q}|U|^k$, where $w_{q}$ is the \emph{weight}  of~$e_q$ according to the edge 
exposure $\sigma$, that is $w_q=\sum_{i=1}^k \deg_{F'} (u_i)$ . 
 Using that $w_q\leq \ell$ and Fact~\ref{fact:subset}, we have that $H[U]$ is $(p,\eps' p^{w_q},\eps')$ pseudo-random for $\eps'=\sqrt{\eps}$ (using that $\eps\ll c$). There we obtain that

\begin{align}\label{eq:T*}\left|\sum_{T\subseteq H} (\vecone_H(K_T) - p) \right|&\le \sum_{T_*\subseteq H} \left| \sum_{K\in \ext(T_*)} (\vecone_H(K) - p) \right|=
 \sum_{T_*\subseteq H} \Big|e(W_1,\dots,W_k) - p|\ext(T_*)| \Big| \nonumber \\
&\le \sum_{\substack{T_*\subseteq H\\ |\ext(T_*)|\geq \eps' z}} \eps' p|\ext(T_*)|+\sum_{\substack{T_*\subseteq H\\|\ext(T_*)|<\eps' z}}(1+\eps')\eps' p\cdot z. \end{align}
Here, the estimate for the first sum comes from the definition~\eqref{eq:defpseudo} whilst the  second sum follows from~Fact~\ref{fact:subset}. 
Note that  each edge in $E(F)\setminus (E(F_*)\cup\{e_{q}\})$ contains exactly one vertex from $e_{q}$, 
hence  we have   $e(F)=e(F')+1=e(F_*)+w_{q}+1$. Thus, with $t_*=t_{F_*}<t$ denoting the number of root-free edges of $F_*=(F_*,(x_1,\ldots,x_r))$,
we obtain from the  induction hypothesis  the following for the  second sum in \eqref{eq:T*}: 

\begin{align}\label{eq:T*small}
\sum_{\substack{T_*\subseteq H\\ |\ext(T_*)|<\eps' z}}(1+\eps')\eps' pz&\leq 2 c^{e(F_*)-t_*}p^{e(F_*)} |U|^{v(F)-k-r}\cdot (1+\eps')\eps'  pz 
\leq\frac\gamma4 c^{e(F)-t}p^{e(F)} |U|^{v(F)-r}.
\end{align}

To derive a bound for the first sum in \eqref{eq:T*}, we will split the sum further.  Define $J:=\log 1/\eps' +w_{q}\log 1/p$ and for all $0\leq j\leq J$, let $b_j$ be the number of copies $T_*$ of $F_*$ in $H$ such that $2^j \eps' z\leq|\ext(T_*)|\leq 2^{j+1}\eps' z$. Note that this covers  all possible copies as $2^{J+1}\eps' z\geq |U|^k$. 
Then the  number of rooted copies of $F'$ in $H$ is at least
$\sum_{j=0}^Jb_j2^j\eps' z$ and, by induction hypothesis, at most $2p^{e(F')}|U|^{v(F')-r}= 2p^{e(F)-1}|U|^{v(F)-r}$. 
Consequently, the first sum  in~\eqref{eq:T*} is
\[\begin{aligned}
 \sum_{\substack{T_*\subseteq H\\ \eps' z \leq |\ext(T_*)|\leq |U|^k}} \eps' p|\ext(T_*)|&\leq \sum_{j=0}^{J} \sum_{\substack{T_*\subseteq H\\2^j\eps' z\leq|\ext(T_*)|\leq 2^{j+1}\eps' z}} \eps' p|\ext(T_*)|\\& \leq \sum_{j=0}^Jb_j2^{j+1}\eps'^2pz \leq 4\eps' p^{e(F)}|U|^{v(F)-r} \leq \frac{\gamma}4 (cp)^{e(F)}|U|^{v(F)-r}.
\end{aligned}\]
Together with \eqref{eq:T*} and \eqref{eq:T*small} we conclude that $|\sum_{T\subseteq H} (\vecone_H(K_T) - p)|<\frac{\gamma}2 c^{e(F)-t}p^{e(F)}|U|^{v(F)-r}$
which finishes the proof of the lemma.
\end{proof}

\begin{remark}\label{rem:tightcount}
Note that if we take $r=0$ in Lemma~\ref{lem:rootcount}, we can drop the maximum co-degree assumption as it is not required in the proof and so this  gives Theorem~\ref{prop:small subgraph appearence}. In fact, the proof of Lemma~\ref{lem:rootcount} actually establishes the stronger bounds~\eqref{eq:rootcount}, which under the same pseudo-randomness condition on $H$, yields the counting property for linear $k$-graphs. 
In general, the  condition is tight up to a multiplicative constant as seen,
e.g., by Alon's construction~\cite{A94} of triangle-free {$n$-vertex $d$-regular graphs with $d=\Omega(n^{2/3})$, which are $(d/n,\lambda)$-jumbled with  $\lambda=O(n^{1/3})$.}
On the other hand the bound can be improved for other graphs, e.g., when $F$ is a larger odd cycle (see \cite{KrivelevichSudakov}).
\end{remark}

\subsection{Embedding compatible families} \label{subsec:compatiblefams}
In this section we  use the counting lemma, Lemma~\ref{lem:rootcount}, to build a \emph{linear sized} structure in the host $k$-graph.
This structure is key to the absorption step but will  also be useful in other parts of the proof. 
Let $(A,\cX)$ be a fixed rooted hypergraph with~$r$ root vertices. 
The structure we look to find 
will consist of many  rooted copies of $A$ in~$H$, which respect a certain intersection restriction on the root vertices but  {are} disjoint otherwise.
Formally, let $T=(V_T,E)$  be a labelled $r$-graph with the vertex set $V_T\subset V(H)$ (which captures the intersection structure)\footnote{Note that each edge $e\in E=E(T)$ is a labelled $r$-set in $V_T\subset V(H)$.}.
Then  $\{A_{e}\}_{e\in E(T)}$   is called {a \emph{$T$-compatible family of copies of  $(A,\cX)$}} {(or simply $T$-compatible)} if:
\begin{enumerate} \item each $A_e$, $e\in E(T)$, is a 
rooted copy of $(A,\cX)$ in $(H,e,V(H))$, i.e., a rooted copy of $A$ in $H$ which maps $\cX$ to $\cY=e$;
\item each $A_e$, $e\in E(T)$, intersects $V_T$ exactly in $e$; and
\item for any two edges $e,e'\in E(T)$,  the copies~$A_{e}$ and $A_{e'}$ intersect exactly in $e\cap e'$. 
\end{enumerate}
In particular, note that  the copies $A_{e}$ and $A_{e'}$, $e\neq e'$, are  disjoint {outside of $V(T)$. } Our next lemma shows that if $T$ has bounded degree and our host hypergraph $H$ is suitably pseudo-random with respect to some $k$-graph $A$, then we can find some $T$-compatible family of copies of $A$. 

\begin{lemma}
\label{lem:absorber}
Given integers $k\geq 2, f, r\geq 0$, $\Delta\geq 1$  and $c>0$ there are $\eps>0$ and $n_0$ such that for $n>n_0$ the following holds.
Suppose that 
\begin{itemize}\item $(A,\cX)$ is a rooted linear $k$-graph with  $r+f$ vertices, $r$ of which are roots, 
\item $H$ is a $(p,\eps p^{\ell},\eps)$-pseudo-random $n$-vertex $k$-graph, with $\ell\geq \degen{(A,\cX)},$ and $\Delta_2(H)<\eps pn^{k-1}$, 
 $Y\subset~V(H)$ with $|Y|\leq\frac{n}{200\Delta^2 (r+f)^2}$ and $\deg_H(v;V\setminus Y)>cpn^{k-1}$ for all $v\in Y$,
 \item  $T$ is an ordered $r$-graph on the vertex set $V_T=Y$ with maximum vertex degree $\Delta_1(T)\leq \Delta$.
\end{itemize}
Then there exists a $T$-compatible family of rooted copies of $(A,\cX)$ in $H$. 
\end{lemma}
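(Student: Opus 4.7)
The natural approach is a simple greedy/iterative construction: process the edges $e_1, e_2, \dots, e_m$ of $T$ in the given order and, at each step $i$, use Lemma~\ref{lem:rootcount} to find a rooted copy $A_{e_i}$ of $(A,\cX)$ with its roots mapped to $e_i$ and its $f$ non-root vertices embedded into a suitably restricted region $U_i \subseteq V(H)$. Concretely, set $W_0 = \emptyset$ and, having built $A_{e_1},\dots, A_{e_{i-1}}$, define
\[
W_i \;=\; \bigcup_{j<i} \big(V(A_{e_j})\setminus e_j\big), \qquad U_i \;=\; V(H)\setminus Y \setminus W_i.
\]
Then apply Lemma~\ref{lem:rootcount} to the induced subgraph $H[U_i \cup e_i]$ with root tuple $e_i$ and non-root embedding region $U_i$; pick any one of the many rooted copies produced and call it $A_{e_i}$. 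Because $U_i \cap Y = \emptyset$ and we exclude all previously used non-root vertices, each copy intersects $V_T = Y$ exactly in $e_i$ and is disjoint from prior copies outside of $Y$, which yields exactly the $T$-compatibility requirements.

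To apply Lemma~\ref{lem:rootcount} at step $i$, I need three things: (a) $|U_i| \geq c'n$ for some constant $c'>0$; (b) the restriction of $H$ to $U_i\cup e_i$ is $(p,\eps' p^{\ell},\eps')$-pseudo-random with $\Delta_2 < \eps' pn^{k-1}$; (c) every root $y\in e_i$ satisfies $\deg_H(y;U_i) \geq c'p|U_i|^{k-1}$. For (a), since $T$ has maximum vertex degree at most $\Delta$, standard double-counting gives $|E(T)| \leq \Delta|Y|/r$, so each added copy contributes $f$ vertices to $W$ and hence $|W_i| \leq f\Delta |Y|/r$; combined with the hypothesis $|Y|\leq n/(200\Delta^2(r+f)^2)$, we obtain $|U_i| \geq n - (1+f\Delta/r)|Y| \geq n/2$. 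Property (b) then follows from the first item of Fact~\ref{fact:subset} applied with $\gamma = (|U_i\cup e_i|/n)^k \geq 2^{-k}$, giving pseudo-randomness with parameter $2^k\eps p^\ell$, and $\Delta_2$ is inherited from $H$.

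For (c), use that $\deg_H(y;V\setminus Y)\geq cpn^{k-1}$ by hypothesis, so
\[
\deg_H(y;U_i) \;\geq\; \deg_H(y;V\setminus Y) - \big|\{e\in E(H): y\in e,\ e\cap W_i \neq \emptyset\}\big|.
\]
The subtracted quantity is at most $(k-1)|W_i|\Delta_2(H) \leq (k-1)(f\Delta|Y|/r)\cdot \eps pn^{k-1}$, which by the bound on $|Y|$ and the choice $\eps \ll c$ is at most $(c/2) pn^{k-1}$. Hence $\deg_H(y;U_i) \geq (c/2)pn^{k-1} \geq (c/2)p|U_i|^{k-1}$, as required.

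Choosing the constants in the order $1/n_0 \ll \eps \ll 1/\Delta, 1/(r+f), c$ (so that the counting lemma applies with some $c' = c'(c,\Delta,f,r) > 0$), the above checks go through uniformly for all $i\leq m$, and Lemma~\ref{lem:rootcount} yields at least one rooted copy $A_{e_i}$ at each step. The only real content is the bookkeeping of the three conditions above; the main (minor) obstacle is making sure $\eps$ is chosen small enough relative to $c,\Delta,r,f$ so that the degree loss through $W_i$ and the inflation of the pseudo-random parameter on $U_i$ both remain manageable throughout all $m$ iterations, which is uniform in $i$ since we never shrink $|U_i|$ below $n/2$.
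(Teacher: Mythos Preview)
There is a genuine gap in step~(c), and it is the reason the paper's proof is more elaborate than a one-shot greedy. You bound the degree loss at a root $y\in e_i$ by
\[
(k-1)\,|W_i|\,\Delta_2(H)\ \leq\ (k-1)\,\frac{f\Delta|Y|}{r}\cdot \eps p n^{k-1},
\]
and then claim that ``by the bound on $|Y|$ and the choice $\eps\ll c$'' this is at most $(c/2)pn^{k-1}$. But $|Y|$ is allowed to be of order $n$ (anything up to $n/(200\Delta^2(r+f)^2)$), so $|W_i|$ is of order $n$ as well, and the right-hand side is of order $\eps p n^{k}$, not $\eps p n^{k-1}$. Equivalently, using the trivial bound $\Delta_2(H)\leq n^{k-2}$ gives a loss of order $n^{k-1}$, which dominates $cpn^{k-1}$ whenever $p=o(1)$. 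No fixed choice of $\eps$ can absorb this extra factor of $n$; the degree of a root into $U_i$ may legitimately drop below the threshold needed for Lemma~\ref{lem:rootcount} once linearly many non-root vertices have been used.

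This is precisely why the paper cannot run the greedy naively. Its proof instead (i) reserves a side set $W$ and first disposes of the few root vertices with low degree into $W$; (ii) runs the greedy on the remaining edges in a separate region $U'$, \emph{allowing} failures but showing via pseudo-randomness that at most $O(\eps p^\ell n)$ root vertices ever become degree-deficient, hence at most $O(\Delta\eps p^\ell n)$ edges fail; (iii) embeds those few failed edges greedily into the untouched reserve $W'$, where all their roots still have good degree by construction. The key point you are missing is that the total number of edges whose embedding fails is $o(pn)$, which is small enough for Claim~\ref{claim:Tsmall} (a greedy over $o(pn)$ edges, where the degree-loss bound $fs\cdot n^{k-2}=o(pn^{k-1})$ \emph{does} hold) to finish the job.
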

We note here again that by Lemma~\ref{lem:lowerboundonp} the condition  $\Delta_2(H)<\eps pn^{k-1}$ can be dropped if $\ell\geq k-1$.

\begin{proof}

Let integers $k,f,r, \Delta$  and $c$ be given. We choose $\gamma= \frac{1}{100 \Delta f}$ and 
as $c>0$ only appears in the lower bound for the degree condition we may assume that $c< { \gamma/8}$.
We apply the counting lemma, Lemma~\ref{lem:rootcount}, with the parameters $k,f,r, c_{\ref{lem:rootcount}}=\frac c{2^k}$
to obtain $\eps_{\ref{lem:rootcount}}$.
We choose $\eps =\eps_{\ref{lem:rootcount}}\frac{(c\gamma)^{k}}{ 4^{k}f\Delta}$  and let~$n_0$ be sufficiently large.
Let $(A,\cX)$, $H$ and $T$ with $V_T=Y$ be  as in the lemma. 

The idea is to construct the required family of rooted copies of $(A,\cX)$ by  repeatedly using the greedy type Algorithm~\ref{alg:absorber}, which simply
extends the family of rooted hypergraphs when it can and records the failure otherwise.
 \begin{algorithm}[t]
    \caption{Greedy builder}\label{alg:absorber}
    \SetAlgoLined
    \SetKwInOut{Input}{Input}
    \Input{ $(A,\cX)$, $H$, $\hat T$, $X\subset V(H)$;}
   Let $(e_1,\dots,e_t)$ be an ordering of $E(\hat T)$\;
    $X^1:=X$, $I^0:=\emptyset$,   $\cA^0:=\emptyset$, and  $s:=1$\;

   \While{$s\leq t$}  {
   	\uIf {there is   a rooted copy $A_{s}$ of $(A,\cX)$ in $(H,e_s,X^s)$}
   		{$\cA^{s}:=\cA^{s-1}\cup\{A_{s}\}$\;
		  $I^{s}:=I^{s-1}$\;

    		$X^{s+1}:= X^{s}\setminus V(A_s)$\; 
    	}
		\uElse{$I^{s}:=I^{s-1}\cup\{e_s\}$\;}
			$s:=s+1$\;

    }
  \end{algorithm}

This greedy approach almost succeeds in finding all the copies of $(A,\cX)$ but we can not guarantee that it will log no failures. To deal with the small number of failures, we have to run the algorithm several times. First, we analyse the following simple case  where we can successfully embed a small number of copies. \begin{claim}\label{claim:Tsmall}
Suppose that  $\hat T\subset T$ is a subhypergraph of $T$ with $e(\hat T)\leq \frac{c^k}{4f}pn$ and $X\subset V(H)\setminus V_T$ is a set of size $|X|\geq c n$  
so that $\deg(v;X)>cp|X|^{k-1}$ holds for each $v$ contained in an edge of $\hat T$. 
Then there  is a $\hat T$-compatible family of rooted copies of $(A,\cX)$ whose vertices are entirely contained in $V_T\cup X$.
\end{claim}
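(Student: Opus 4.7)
The plan is to feed the data $(A,\cX), H, \hat T, X$ into Algorithm~\ref{alg:absorber} and argue that no step logs a failure, so that $I^t=\emptyset$ and $\cA^t=\{A_s\}_{s\in[t]}$ is the desired $\hat T$-compatible family. The three properties in the definition of compatibility follow directly from the algorithm: each $A_s$ is a rooted copy of $(A,\cX)$ with roots mapped to $e_s$; its non-root vertices lie in $X^s\subseteq X\subseteq V(H)\setminus V_T$, so $A_s$ meets $V_T$ precisely in $e_s$; and the update $X^{s+1}:=X^s\setminus V(A_s)$ ensures that any two distinct copies intersect only in $e_s\cap e_{s'}$. Hence it suffices to show that at every step $s\in[t]$ the algorithm successfully extends the family.

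To do this we plan to apply Lemma~\ref{lem:rootcount} at each step with $U:=X^s$, roots $\cY:=e_s$, and a degree constant of $c/2$ in place of its $c$. Writing $R:=X\setminus X^s$ for the $(s-1)f$ non-root vertices consumed so far, one has $|R|\leq tf\leq \tfrac{c^k}{4}pn$; since $c^{k-1}p\leq 1$ this yields $|X^s|\geq \tfrac{3c}{4}n$, comfortably verifying the size hypothesis. The pseudo-randomness of $H$ at level $\eps p^\ell$ and the bound $\Delta_2(H)<\eps pn^{k-1}$ are inherited directly from the hypotheses of Lemma~\ref{lem:absorber}. Once the degree condition $\deg_H(v;X^s)\geq(c/2)p|X^s|^{k-1}$ is confirmed for each $v\in e_s$, Lemma~\ref{lem:rootcount} will supply at least $\tfrac{1}{2}(cp/2)^{e(A)}|X^s|^f>0$ rooted copies of $(A,\cX)$ in $(H,e_s,X^s)$, any of which becomes $A_s$.

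The delicate step is therefore to control the degree loss $\deg_H(v;X)-\deg_H(v;X^s)$ for $v\in e_s$. A union bound across the $k-1$ coordinate positions bounds this loss by $(k-1)\cdot e(\{v\},R,X,\ldots,X)$; combining the pseudo-randomness of $H$ with Fact~\ref{fact:subset} (for the regime in which the product of set sizes drops below $\eps p^\ell n^k$) yields an estimate of the form $(k-1)(1+\eps)p\bigl(|R||X|^{k-2}+\eps p^\ell n^k\bigr)$. Plugging in $|R|\leq\tfrac{c^k}{4}pn$ and $|X|\geq cn$, and using the smallness $c<\gamma/8$ adopted earlier in the proof of Lemma~\ref{lem:absorber} together with the choice of $\eps$, this loss is bounded above by $\tfrac{1}{2}\deg_H(v;X)$. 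Hence $\deg_H(v;X^s)\geq\tfrac{c}{2}p|X|^{k-1}\geq\tfrac{c}{2}p|X^s|^{k-1}$ as required. The main obstacle of the argument is exactly this degree-preservation bookkeeping, which is precisely why the hypothesis restricts $e(\hat T)$ to at most $\tfrac{c^k}{4f}pn$.
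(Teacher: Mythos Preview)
Your overall strategy—run Algorithm~\ref{alg:absorber} and invoke Lemma~\ref{lem:rootcount} at each step to show that no failure is ever logged—is exactly the paper's, and your verification of $\hat T$-compatibility and of the size bound $|X^s|\ge\tfrac{3c}{4}n$ is fine. The gap is in the degree-loss estimate. Bounding $e(\{v\},R,X,\ldots,X)$ via pseudo-randomness together with Fact~\ref{fact:subset} produces an additive error of order $\eps p^{\ell+1}n^k$, so to make the total loss at most $\tfrac12\deg(v;X)\ge\tfrac{c^k}{2}pn^{k-1}$ you would need $\eps p^{\ell}n=O(c^k)$. Nothing in the hypotheses rules out $p=\Theta(1)$ (the dense quasi-random regime is covered by the main theorems), and then $p^\ell n\to\infty$, so no constant choice of $\eps>0$ suffices. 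The structural problem is that one of your $k$ sets is the singleton $\{v\}$: the product of sizes is $O(n^{k-1})$, and whenever $\eps p^\ell n$ is large you are forced into the ``small'' regime of Fact~\ref{fact:subset}, whose conclusion $e\le(1+\eps)\eps p^{\ell+1}n^k$ is far too weak here.

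The paper avoids pseudo-randomness for this step and instead uses the codegree: removing the at most $ft$ vertices of $R$ destroys at most $O\bigl(ft\cdot\Delta_2(H)\bigr)$ labelled edges through $v$, and with the trivial bound $\Delta_2(H)\le n^{k-2}$ together with $ft\le\tfrac{c^k}{4}pn$ this is $O_k(c^k p n^{k-1})$, a fixed fraction of $\deg(v;X)$. You already noted that the $\Delta_2(H)$ hypothesis of Lemma~\ref{lem:absorber} is available; use it (or the trivial $n^{k-2}$ bound) directly for the loss, rather than pseudo-randomness, and the argument goes through.
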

\begin{proof}[Proof of Claim~\ref{claim:Tsmall}]
We run the Algorithm~\ref{alg:absorber} with input $(A,\cX)$, $H$, $\hat T$ and $X$ and claim that the family $\cA^t$, $t=e(\hat T)$, produced by the algorithm, has the required properties.
Let $(e_1,\dots,e_t)$ be an ordering of $E(\hat T)$. Note that after step $s\in[t]$ the algorithm 
has removed from $X$ in total at most $f \cdot s$ vertices. Thus, at each time $s$ the set $X^s$ in the algorithm has size $|X^s|\geq \frac12|X|> c_{\ref{lem:rootcount}} n$ and  
$\deg(v;X^s)\geq \deg(v;X)-f s \cdot\Delta_2(H)>c_{\ref{lem:rootcount}} p |X^s|^{k-1}$ holds for each $v\in e_{s}$, using that $\Delta_2(H)\leq n^{k-2}$.
Thus, Lemma~\ref{lem:rootcount} applied with the choices of constants 
yields a rooted copy $A_s$ of $(A,\cX)$ in $(H,e_s,X^s)$. As this holds for all $s\in[t]$ the family $\cA^t$  is $\hat T$-compatible and is contained in $V_T\cup X$.
\end{proof}
Returning to the proof of Lemma \ref{lem:absorber}, choose disjoint subsets $U, W\subset V(H)\setminus Y$ of size $|U|=|W|=2\gamma n$ but arbitrarily otherwise. 
Let \[B=\{v\in V_T\colon \deg(v;W)<2cp|W|^{k-1}\}\] 
and let $T_1\subset T$ denote the subgraph of $T$  on the same vertex set $V_T$ which consists of all edges {intersecting $B$}.
    By {the} pseudo-randomness of $H$ we conclude that $|B|\leq \eps \gamma^{-(k-1)} p^{\ell}n$. As  $e(T_1)\leq \Delta |B|$ 
we  can apply Claim~\ref{claim:Tsmall} with $\hat T=T_1$ and $X=V(H)\setminus V_T$
to find a $T_1$-compatible family $\cA_1$. 
Let\footnote{Here, $V(\cA)$ denotes the set of vertices which feature in rooted copies of $A$  in the family $\cA$.}
 $U'=U\setminus V(\cA_1)$ and $W'=W\setminus V(\cA_1)$ which are disjoint sets of size $|U'|,|W'|\geq \gamma n$ and
note that  each vertex $v\in V_T\setminus B$ satisfies
 
  \begin{align}\label{eq:degW'}\deg(v;W')\geq\deg(v;W)-f\cdot e(T_1)\Delta_2(H)>cp |W'|^{k-1}.\end{align}
\begin{claim}\label{claim:Tlarge}
Let $T'\subset T$ be the subgraph obtained by removing the edge set $E(T_1)$ from $T$.
Then there is a subgraph $T_2\subset T'$  with $e(T_2)\geq e(T')-(2/\gamma)^{k-1}\Delta\eps p^{\ell}n$ and a $T_2$-compatible 
family $\cA_2$ of rooted copies of $(A,\cX)$ whose vertices are entirely contained in $V_T\cup U'$.
\end{claim}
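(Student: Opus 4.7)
My plan is to apply Algorithm~\ref{alg:absorber} to a suitable subgraph $T_2 \subseteq T'$, with reservoir $X := U'$. The set $T_2$ is obtained by deleting from $T'$ every edge meeting a set $B' \subseteq V_T$ of vertices with anomalously small degree into $U'$. This extra deletion is necessary because $B$ was defined in terms of~$W$, so the previous argument only controls degrees into $W'$ and provides no direct information about the degree of vertices of $T'$ into $U'$.

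Concretely, set $B' := \{v \in V_T : \deg(v;U') < 2cp|U'|^{k-1}\}$. By the definition of $B'$,
\[
e(B', U', \ldots, U') \;=\; \sum_{v \in B'}\deg(v; U') \;<\; 2cp|B'||U'|^{k-1}.
\]
If we had $|B'| \cdot |U'|^{k-1} \geq \eps p^{\ell}n^k$, then the $(p, \eps p^{\ell}, \eps)$-pseudo-randomness of~$H$ would force $e(B', U', \ldots, U') \geq (1-\eps)p|B'||U'|^{k-1}$, contradicting the previous line since $c < \gamma/8$ and $\eps$ is small. Hence $|B'| < \eps p^{\ell}n^k/|U'|^{k-1} \leq (2/\gamma)^{k-1}\eps p^{\ell}n$, using $|U'| \geq \gamma n$. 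Setting $T_2 := T' \setminus \{e \in E(T') : e\cap B' \neq \emptyset\}$ and using $\Delta_1(T) \leq \Delta$ yields $e(T') - e(T_2) \leq \Delta|B'| \leq (2/\gamma)^{k-1}\Delta\eps p^{\ell}n$, matching the required bound.

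It remains to verify that running Algorithm~\ref{alg:absorber} on $T_2$ with reservoir $U'$ logs no failure. Using $|V_T| \leq n/(200\Delta^2(r+f)^2)$ and $e(T_2) \leq \Delta|V_T|/r$, the total number of vertices removed from $U'$ over the entire run is at most $f\cdot e(T_2)$, which is a tiny fraction of $|U'| \geq \gamma n$; consequently $|X^s| \geq \gamma n/2$ throughout, so the size condition of Lemma~\ref{lem:rootcount} is satisfied. For each root $y$ of an edge $e_s$ of $T_2$, the fact that $y \notin B'$ yields $\deg(y;U') \geq 2cp|U'|^{k-1}$, while the loss $\deg(y;U') - \deg(y;X^s)$ is bounded by $(k-1)!\cdot|U' \setminus X^s|\cdot\Delta_2(H)$, controlled via the hypothesis $\Delta_2(H) < \eps pn^{k-1}$ together with the smallness of $|U'\setminus X^s|$. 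The main technical point — which dictates the final choice of $\eps$ — is to certify that this loss stays below $cp|X^s|^{k-1}$ throughout the $e(T_2)$ iterations, so that the degree hypothesis $\deg(y;X^s) \geq c_{\ref{lem:rootcount}}p|X^s|^{k-1}$ of Lemma~\ref{lem:rootcount} persists and the algorithm produces the required rooted copy $A_s$ of $(A,\cX)$ at every step; the resulting collection $\{A_s\}_{e_s \in E(T_2)}$ is then the desired $T_2$-compatible family.
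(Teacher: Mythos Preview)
Your approach of pre-filtering a bad set $B'$ and then claiming the algorithm logs no failures on $T_2$ has a genuine gap in the sparse regime $p=o(1)$. The issue is the degree-maintenance step: you bound
\[
\deg(y;U')-\deg(y;X^s)\;\leq\;(k-1)!\cdot|U'\setminus X^s|\cdot\Delta_2(H),
\]
but $|U'\setminus X^s|$ can be as large as $f\cdot e(T_2)$, which is $\Theta(n)$ (since $|V_T|$ is allowed to be linear and $T$ has bounded degree). Combined with $\Delta_2(H)<\eps p n^{k-1}$ this gives a loss of order $\eps p n^{k}$, whereas the target $cp|X^s|^{k-1}$ is only of order $p n^{k-1}$. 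No fixed choice of $\eps$ can close this extra factor of $n$; the bound is off by a whole power of $n$. (Using the trivial $\Delta_2(H)\le n^{k-2}$ instead leads to the same conclusion: the loss is $\Theta(n^{k-1})$, which swamps $cp n^{k-1}$ once $p=o(1)$.) This is exactly why Claim~\ref{claim:Tsmall} requires $e(\hat T)\leq \tfrac{c^k}{4f}pn$; that extra factor of $p$ in the number of steps is what makes the degree bookkeeping go through there, and it is absent here.

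The paper sidesteps this obstruction by a different idea: it runs the algorithm on \emph{all} of $T'$, accepts that failures may occur, and bounds $|I^t|$ \emph{after the fact}. The key monotonicity is that if a vertex $y^s$ causes a failure at time $s$ (i.e.\ $\deg(y^s;X^s)$ is too small), then since $X^t\subseteq X^s$ it also satisfies $\deg(y^s;X^t)<cp|X^t|^{k-1}$. Hence every failure-causing vertex lies in the single fixed set $Y^t=\{y\in V_T:\deg(y;X^t)<cp|X^t|^{k-1}\}$, whose size is controlled by one application of pseudo-randomness to $(Y^t,X^t,\dots,X^t)$, and $|I^t|\le\Delta|Y^t|$. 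This post-hoc analysis avoids any need to track degrees through $\Theta(n)$ iterations.
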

Before proving the claim we note that it readily implies the lemma. Indeed, define $T_3= T'\setminus T_2$ which then satisfies $e(T_3)\leq (2/\gamma)^{k-1}\Delta\eps p^{\ell}n$. 
Further,~\eqref{eq:degW'} holds for all vertices in $V_T\setminus B$, in particular for all those contained in edges of $T_3$.
Thus we can apply Claim~\ref{claim:Tsmall} with $\hat T=T_3$ and $X=W'$   and obtain a $T_3$-compatible family 
$\cA_3$ of rooted copies of $(A,\cX)$ whose vertices are entirely contained in $V_T\cup W'$. Since $T=T_1\cup T_2\cup T_3$, the family 
 $\cA_1\cup \cA_2\cup \cA_3$ is   $T$-compatible 
and the lemma follows.
\end{proof}

\begin{proof}[Proof of Claim~\ref{claim:Tlarge}]
We run the Algorithm~\ref{alg:absorber} with input $(A,\cX)$, $H$, $\hat T=T'$ and $X= U'$ and it is sufficient to show that {$|I^t|\leq (2/\gamma)^{k-1}\Delta\eps p^{\ell}n$.} Let $(e_1,\dots, e_t)$ be an ordering of $E(T')$.
As $t< \Delta |V_T|\leq \frac{\gamma n}{2f}\leq \frac{|X|}{2f}$ and at each time $s< t$ the algorithm removes at most  $f$ vertices from $X^s$ to obtain $X^{s+1}$, we have for each time $s\in[t]$   that 
$|X^s|\geq |X^1|-(s-1)f>|X^1|/2 \ge \gamma n/2$.
Then Lemma~\ref{lem:rootcount} applied with the choices of constants 
implies that for each $s\in[t]$ there is a rooted copy of $(A,\cX)$ in $(H,e_s,X^s)$
unless  the  degree condition fails for a  vertex in~$e_s$. 
Thus, $I^t$ comprises exactly of those  $e_s$ such that for some vertex in $e_s$, say $y^s$, we have $\deg(y^s;X^s)<c_{\ref{lem:rootcount}}p|X^s|^{k-1}\leq c_{\ref{lem:rootcount}}p|X|^{k-1}$,
 which  implies that  $\deg(y^s;X^t)<c_{\ref{lem:rootcount}}p|X|^{k-1}\leq cp|X^t|^{k-1}$.
Let \[Y^t=\{y\in V_T\colon\deg(y;X^t)<cp|X^t|^{k-1}\}.\]
Clearly,  $e(Y^t,X^t,\ldots,X^t)\leq cp|Y^t||X^t|^{k-1}$ and the  pseudo-randomness  condition together with $|X^t|\geq \gamma n/2$ implies that $|Y^t|\leq (2/\gamma)^{k-1}\eps p^{\ell}n$.
On the other hand,  for any $e_s\in I^t$, the vertex $y^s\in e_s$, as detailed above, is contained in
$Y^t$ and every $y\in Y^t$ is $y^s$ for at most $\Delta$ elements $e_s\in I$. This shows that $|I^t|\leq \Delta|Y^t|$ and the claim follows.
\end{proof}


\section{Absorbers and the template}\label{sec:absorber}

As mentioned in  Section~\ref{sec:outline} our proof works by absorption. In particular, it utilises the ``absorber-template'' method introduced by Montgomery \cite{M14a,montgomery2019spanning} which 
has since been used by various authors in different settings, see, e.g., \cite{FKL16,FN17,HKMP18,han2019finding,han2019tilings,Kwan16,nenadov2018ramsey}. 
In our case we combine many copies of small special subgraphs called  \emph{absorbers} to a large family using a large \emph{template}, which captures how these copies intersect.
The absorbers will depend  on the spanning structure we are interested  in but in
 both cases we should be able to find them in the pseudo-random host, thus
they need to be linear hypergraphs and  edges containing distinct rooted vertices should be disjoint. 
Moreover, in light of Lemma~\ref{lem:rootcount} we want that absorbers have small edge degeneracy as to weaken the pseudo-random condition necessary for the argument.  
The absorbers for factors {given here are  straightforward to describe and rely on permuting  copies of $F$ in a grid-like structure so as to reduce the degeneracy.} 
The path absorbers, however, are more involved and  differ from those used in absorbing arguments before. 
In particular, the first absorbers \cite{HanSchacht,BHS}  for finding loose Hamilton cycles in hypergraphs  do not satisfy our definition of path absorbers, 
while the ones used, e.g., in~\cite{LMM},  have edge degeneracy~$k$ and so are not as effective as those given here, which have degeneracy $k-1$.

\subsection{Absorbers for factors} \label{subsec:factorabsorbers}

For a linear $k$-graph $F$ an \emph{absorber} for $F$  is a rooted  linear $k$-graph $(A_F,\cX)$ with non-empty root vector~$\cX$ such that there is an
$F$-factor of $A_F$ and an $F$-factor of the subgraph of $A_F$ obtained by removing all  roots $\cX$. We will often refer to the $F$-factor on the full vertex set of $A_F$ as the \emph{complete} $F$-factor, while the factor on $V(A_F)\setminus \cX$ is referred to as the \emph{internal} $F$-factor.  Note that the number of root vertices is a multiple of $|V(F)|$.

\begin{lemma}\label{lem:AF}
Given $k$ and a linear $k$-graph $F$ on  $[f]$ vertices.
Then there is an absorber $(A_F,\cX)$ of $F$ with~$f^2$ vertices,   $f$  roots and edge degeneracy   
\[\degen(A_F, \cX)\leq \degen(F)+\max_{e\in E(F)}\sum_{u\in e}\deg_F(u)= \degen(F)+\Delta'(F)+k,\]
where $\degen(F)$ is considered here to be the degeneracy of $F$ with an empty root set of vertices. 
In particular, if $F$ consists of a single edge, then there is an absorber of edge degeneracy at most $k$.
\end{lemma}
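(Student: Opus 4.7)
The plan is to build $A_F$ on the vertex set $V(A_F) = [f] \times [f]$ with roots on the diagonal, realising the two $F$-factors as the rows and the broken diagonals of the grid. I would set $\cX = \big((1,1), \dots, (f,f)\big)$ and, for each $i \in [f]$, place a copy of $F$ on the row $R_i := \{(i, j) : j \in [f]\}$ via the bijection $\rho_i(j) = (i, j)$. For each $s \in [f-1]$, place a copy of $F$ on the broken diagonal $C_s := \{(j-s, j) : j \in [f]\}$ (with indices modulo $f$) via $\delta_s(j) = (j-s, j)$. The edges of $A_F$ are the union of the edges of these $2f-1$ copies.

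The rows immediately form a complete $F$-factor of $A_F$, and the broken diagonals $C_1, \dots, C_{f-1}$ partition the off-diagonal cells, giving an $F$-factor of $A_F \setminus \cX$. For linearity of $A_F$, rows are pairwise disjoint, the $C_s$ are pairwise disjoint (since $(j-s, j) = (j'-s', j')$ forces $s = s'$), and $R_i \cap C_s = \{(i, i+s)\}$ is a single vertex; hence any two edges from distinct copies meet in at most one vertex, and within a copy linearity is inherited from $F$. The rooted condition holds because each root $(i,i)$ lies only in $R_i$ (never in $C_s$ for $s \neq 0$), and distinct rows are disjoint.

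To bound the rooted edge degeneracy, I would expose the edges in three phases: (1) row-edges containing a root; (2) row-edges not containing a root; (3) broken-diagonal edges. This respects the constraint that root-edges precede non-root edges. A Phase 1 edge in $R_i$ only meets earlier Phase 1 edges in the same row, each sharing the root $(i,i)$, so it sees at most $\deg_F(i) - 1 \leq \degen(F)$ earlier edges; here I use that the edges through any vertex $v$ of $F$ form a clique of size $\deg_F(v)$ in the line graph of $F$, yielding $\degen(F) \geq \deg_F(v) - 1$ for every $v$. For Phase 2, I order the non-root edges of each row $R_i$ according to a degeneracy ordering of the subgraph of $F$ on $V(F)$ with edges not containing $i$: each such edge sees at most $\degen(F)$ earlier Phase 2 edges in its row and at most $k$ earlier Phase 1 edges, since by linearity of $F$ at most one $F$-edge contains both $i$ and any given other vertex.

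The critical step is Phase 3, and the precise choice $\delta_s(j) = (j-s, j)$ is what makes the bound work: it forces the row-label of $\delta_s(j)$ in row $R_{j-s}$ to be again $j$. Hence if $e' = \{(u_t - s, u_t) : t \in [k]\}$ corresponds to the edge $e = \{u_1, \dots, u_k\} \in E(F)$ in the $C_s$-copy, then each cell $(u_t - s, u_t)$ lies in exactly $\deg_F(u_t)$ row-edges, so $e'$ meets at most $\sum_{u \in e} \deg_F(u) \leq \Delta'(F) + k$ row-edges, at most $\degen(F)$ earlier edges of its own $C_s$-copy (via a degeneracy ordering), and no edges from other broken diagonals by disjointness. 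Summing gives at most $\degen(F) + \Delta'(F) + k$, dominating the Phase 1 and Phase 2 bounds; specialising to a single-edge $F$ yields $\degen(F) = \Delta'(F) = 0$ and bound $k$. The main obstacle I foresee is precisely the labeling subtlety in Phase 3: the naive choice $\delta_s(j) = (j, j+s)$ would replace $\sum_{u \in e} \deg_F(u)$ by the shifted sum $\sum_t \deg_F(u_t + s)$, which for non-edge-regular $F$ need not be bounded by $\Delta'(F) + k$ (one can exceed it, e.g., when $F$ is a star plus low-degree edges).
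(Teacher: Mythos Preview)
Your construction and argument are essentially identical to the paper's: the same grid $[f]\times[f]$ with rows as the complete factor, broken diagonals as the internal factor, diagonal roots, and the same three-phase edge exposure culminating in the key observation that the second-coordinate projection is the isomorphism for \emph{both} row and diagonal copies, so the row-degree sum at a diagonal edge equals $\sum_{u\in e}\deg_F(u)$ for some genuine edge $e$ of $F$. Your Phase~2 bound $\degen(F)+k$ is slightly sharper than the paper's cruder $\Delta'(F)$, and your explicit warning about the wrong labelling $\delta_s(j)=(j,j+s)$ is a nice touch the paper leaves implicit, but neither changes the overall bound or approach.
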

\begin{proof}
Given a linear $k$-graph $F$ on the vertex set $V(F)=\Zf=\mathbb Z/f\mathbb Z$ we define
the $k$-graph~$A_F$ on the vertex set $\Zf\times\Zf$ as follows.
For each $i\in\Zf$, $A_F$ contains a copy $F_i$  of $F$ on the vertex set $V(F_i)=\{i\}\times\Zf$.
Further, for each $j\in[f-1]$, $A_F$ contains a copy $F^j$ of $F$ on the vertex set 
$V(F^j)=\{(i,i+j)\colon i\in\Zf\}$ where addition is in $\Zf$. These copies we place so that the projection to the second coordinate $\phi((\cdot,\ell))=\ell$
defines an isomorphism  between~$F_i$ and~$F$ and between $F^j$ and~$F$, respectively. 
Note that $A_F$ is a linear hypergraph since
$F_i$ and $F_j$ are disjoint for $ij\in\Zf\times \Zf$, $i\neq j$, and $F^i$ and $F^j$ are disjoint for $ij\in[f-1]\times [f-1], i\neq j$, 
while $F_i$ and $F^j$,  $ij\in\Zf\times[f-1]$ intersect only in the vertex $(i,i+j)$. We further define roots $\cX=((1,1),\dots, (f,f))$ and obtain the rooted $k$-graph $(A_F,\cX)$ with the property that $\{F_i\}_{i\in\Zf}$ gives the complete $F$-factor of   $A_F$ while $\{F^j\}_{j\in[f-1]}$
gives the internal $F$-factor.
 Hence, $A_F$ is an absorber of $F$ and it remains to show the bound on the degeneracy of $A_F$. 
 
Let $\sigma$ denote an edge exposure of $F$ (without any roots) which yields the degeneracy of $F$.
We construct an edge exposure $\tau$ for $A_F$ by first exposing edges containing the roots in an arbitrary order. 
Note that this step does not expose any edge of  $F^j$, $j\in[f-1]$, since none of them contains root vertices. 
In the second step we expose the remaining edges of 
all $F_i$, $i\in\Zf$
in an arbitrary order and finally, in the third step, we expose the edges of each~$F^j$, $j=1,\dots, f-1$ according to the order given by $\sigma$. 
As the $F_i$'s are vertex disjoint each edge $e$ from the  second step  has weight at most $\sum_{u\in e} (\deg_F(u)-1)$.
 
To bound the weights of the edges in the third step consider a $j\in[f-1]$ and 
let  $e_t=\{(i_1,i_1+j),\dots,(i_k,i_k+j)\}$ be the $t$-th edge of $F^j$ in the ordering~$\tau$. 
Recall that $F^j$ is disjoint from other $F^{j'}$, that we expose the edges of~$F^j$ according to $\sigma$  and  that $V(F_i)\cap V(F^j)=\{(i,i+j)\}$  for $i\in\Zf$. 
Therefore, the weight of $e_t$ with respect to $\tau$ is exactly \[w_{\sigma(t)}+\sum_{\ell\in[k]}\deg_{F_{i_\ell}}((i_\ell,i_\ell+j)).\]
Recall also that  $F_i$ and $F^j$ are identical copies of $F$, i.e., the projection to the second coordinate $\phi(\cdot,\ell)=\ell$ is an isomorphism of $F_i$ and $F^j$ to  $F$. 
By this projection  $\{i_1+j,\dots, i_k+j\}$ is an edge in $F$ and the degree of $(i_\ell,i_\ell+j)$ in $F_{i_\ell}$ is $\deg_F(i_\ell+j)$ for each $\ell\in[k]$.
Hence, $\sum_{\ell\in[k]}\deg_{F_{i_\ell}}((i_\ell,i_\ell+j))\leq \max_{e\in E(F)}\sum_{u\in e}\deg_F(u).$ 
Together with $w_{\sigma(t)}\leq \degen (F)$ this yields the desired bound.
\end{proof}

\subsection{Path absorbers} \label{subsec:pathabsorbers}
A $k$-uniform \emph{path absorber} is a rooted linear $k$-graph $(P,\cX,y_1,y_2)$ with a non-empty set of root vertices $\cX$ and two distinguished vertices $y_1,y_2\in V(P)\setminus \cX$, 
called \emph{end vertices},
 such that there is a loose path from $y_1$ to $y_2$ which uses all the vertices of $V(P)$ and a loose path from $y_1$ to $y_2$ in $P$ 
 which covers the vertices $V(P)\setminus \cX$. The first path we call the \emph{complete} loose path and latter is called the \emph{internal} loose path.

\begin{lemma} \label{lem:AHC3}
For each $k\geq 3$, there exists a $k$-uniform path absorber $(P,\cX,y_1,y_2)$ on $9k^2-23k+15$ vertices with roots $\cX=\{x_1,\ldots,x_{k-1}\}$ and degeneracy $\degen (P,\cX)=k-1$.
\end{lemma}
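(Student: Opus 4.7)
The plan is to give an explicit construction of the absorber $(P,\cX,y_1,y_2)$ and then verify each of the six required properties in turn. I will take as a starting point a ``backbone'' loose Hamilton path $B$ from $y_1$ to $y_2$ consisting of $9k-14$ edges; by the standard vertex formula for $k$-uniform loose paths this has $(k-1)(9k-14)+1 = 9k^2-23k+15$ vertices, matching the claimed count, and $B$ itself will serve as the complete loose Hamilton path. The roots $x_1,\dots,x_{k-1}$ will be placed as middle vertices of $k-1$ widely separated edges of $B$ -- these are the root edges. On top of $B$ I will add a small collection of auxiliary edges (each using only non-root vertices already present in $B$); together with the non-root backbone edges, these will assemble the internal loose Hamilton path of $P\setminus\cX$ with $9k-15$ edges by providing a rerouting around each root edge that visits the non-root vertices displaced by the removal of the roots.

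After presenting the construction, I will verify in order: (i)~the vertex count, which is immediate from the backbone length; (ii)~the complete Hamilton path, which is $B$; (iii)~the internal Hamilton path, obtained by replacing the root edges together with a few neighbouring backbone edges by the auxiliary edges so that every non-root vertex is covered exactly once and the resulting structure is a loose path from $y_1$ to $y_2$; (iv)~linearity, which follows from the explicit choice of auxiliary edges, each sharing at most one vertex with every other edge of $P$; and (v)~the rooted property, which follows from the wide spacing of the root edges along $B$ together with the fact that each root $x_i$ appears only in its unique root edge.

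The final and hardest step is (vi), bounding the edge degeneracy by $k-1$. For this, I will exhibit an explicit edge exposure: first the $k-1$ root-containing edges (in any order, contributing $0$ prior neighbours each by the rooted property), then a left-to-right sweep of the non-root backbone edges, with each auxiliary edge scheduled immediately after the last backbone edge it meets. At each step a local analysis around the relevant root will show that the edge currently being added meets at most $k-1$ previously-exposed edges; the main obstacle will be controlling the intersections at the auxiliary edges, which by design touch several backbone edges simultaneously. It is precisely this degeneracy constraint that forces the separation between consecutive root edges to grow linearly in $k$, and hence the total backbone length to be $9k-14$ rather than some constant -- explaining the $\Theta(k^2)$ vertex count $9k^2-23k+15$.
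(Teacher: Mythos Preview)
Your proposal is an outline, not a proof. You describe the \emph{shape} of a construction---a backbone loose path of length $9k-14$, the roots placed as middle vertices of $k-1$ well-separated backbone edges, and some unspecified ``auxiliary edges'' that reroute around them---but you never actually write down the auxiliary edges, never give the edge exposure, and never verify linearity or the degeneracy bound. Since the entire content of this lemma \emph{is} the explicit construction, phrases such as ``by design'' and ``a local analysis will show'' are standing in for precisely the work that needs to be done. There is also a concrete obstruction to the most natural reading of your plan: if you try to reroute locally around a single root by replacing its backbone edge together with $s$ neighbours on each side, the number of non-root vertices in that block is $(2s{+}1)(k{-}1)\equiv 0\pmod{k-1}$, which is not the vertex count of any loose path between the two fixed junction endpoints. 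Hence the reroutings for different roots cannot be carried out independently; they must interact globally, and you give no indication of how. Likewise, an auxiliary edge using $k$ backbone vertices can a priori meet $k$ backbone edges, so without the construction in hand the claim that every weight is at most $k-1$ is unsupported.

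For comparison, the paper's construction is structurally quite different, and this difference is exactly what makes the degeneracy bound work. It does not start from a backbone at all. Instead it assembles $P$ from $2k-3$ small five-edge ``absorbing gadgets'' $P_i$, each admitting an inner three-edge loose path and an outer four-edge loose path with the same endpoints, chained together by connecting edges $h_i$. The roots lie only in the outer paths of the first $k-1$ gadgets; the crucial step is that the non-root outer/inner vertices of these gadgets are \emph{identified} with inner/outer vertices of the last $k-2$ gadgets (via carefully indexed sets $U,V,W$), so that flipping every gadget between its inner and outer path simultaneously covers or uncovers exactly~$\cX$. These cross-gadget identifications are what drive the degeneracy computation: under the paper's exposure the heavy edges $e'_i$ and $g_i$ have weight at most $k-1$ precisely because their shared vertices are distributed across distinct gadgets, while all remaining edges have weight at most~$2$. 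Your backbone-plus-detours scheme has no analogue of this identification step, and without one I do not see how you will keep every auxiliary edge's weight below~$k$.
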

\begin{proof}
Our path absorber is defined by smaller subgraphs which we call \emph{absorbing gadgets} (see Figure \ref{fig:absorbing_gadget}). 
An absorbing gadget $P_i$ is a hypergraph on $5k-6$ vertices, with disjoint vertex subsets $A_{i}$, $A'_i$, $B_i$ and $C_i$ such that $|A_i|=|A_i'|=k-2$, $|B_i|=k-3$, 
$|C_i|=2k+1$ and $V(P_i)=A_i\cup A'_i\cup B_i \cup C_i$. We  label $C_i=\{c_{i1},\ldots,c_{ik},c'_{i1},\ldots,c'_{ik},c_{i*}\}$ and let $E(P_i)=\{e_i,e_i',f_i,f_i',g_i\}$ where
\[\begin{aligned}
e_i:=\{c_{i2},c_{i*}\}\cup &A_i,\qquad  f_i:=\{c_{i1},\ldots,c_{ik}\},\\  
e'_i:=\{c_{i2}',c_{i*}\}\cup &A_i',\qquad f_i':=\{c_{i1}',\ldots,c'_{ik}\} \qquad { and }\\ g_i:=&\{c_{ik},c_{i*},c_{i1}'\}\cup B_i.\end{aligned}\]
The key property of the absorbing gadget is that there are two loose paths in $P_i$, both with end vertices $c_{i1}$ and $c'_{ik}$. 
Indeed, there is a loose path from $c_{i1}$ to $c'_{ik}$ which covers $V(P)\setminus (A_i\cup A_i')$, namely taking the edge sequence $(f_i,g_i,f_i')$. 
We call this the \emph{inner loose path} of $P_i$. We also have the \emph{outer loose path} of~$P_i$ defined by the edge sequence $(f_i,e_i,e_i',f'_i)$ which covers $V(P)\setminus B_i$ and also has endpoints $c_{i1}$ and $c_{ik}'$. 

\begin{figure}
    \centering
  \includegraphics[scale=.8]{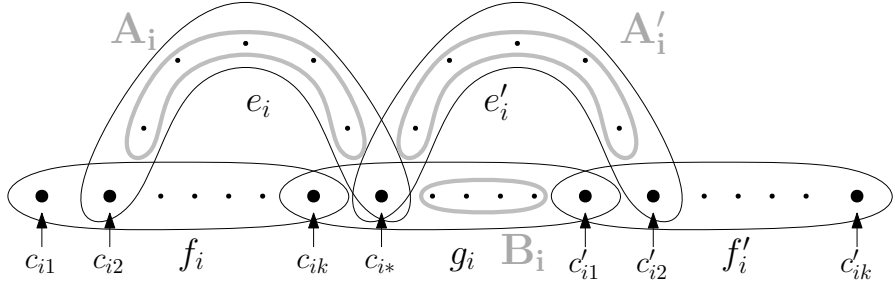}
    \caption{An absorbing gadget.}
    \label{fig:absorbing_gadget}
\end{figure}

The path absorber $P$ is then defined by taking copies of these absorbing gadgets, joining them together using singular edges 
and identifying  vertices  in $A_i\cup A_i'\cup B_i$ across the gadgets. In more detail, we take the vertex set of $P$ to 
be the disjoint union \[V(P):=\cX \cup U\cup V \cup W\cup \bigcup_{i=1}^{2k-3}C_i\cup \bigcup_{i=1}^{2k-4}D_i.\]
On $V(P)$ we will define absorbing gadgets $P_1,\dots, P_{2k-3}$, where $V(P_i)=A_i\cup A'_i\cup B_i \cup C_i$ and $E(P_i)=\{e_i,e'_i,f_i,f'_i,g_i\}$ as above.
The set $C_i$ is also labelled as in the previous paragraph and is used by $P_i$ only, while vertices in $A_i, A_i'$ and $B_i$ will be shared with other gadgets in the way 
explained in the next paragraph.
 The sets~$D_i$, $|D_i|=k-2$, {are used} to build the edges $h_i=\{c_{ik}',c_{(i+1)1}\}\cup D_i$, $i\in[2k-4]$ which  connect~$P_i$ and $P_{i+1}$.
 All together we obtain edge set of $P$ given by 
\[E(P):= E(P_1)\cup\dots\cup E(P_{2k-3})\cup\{h_1,\dots, h_{2k-4}\}\]
and we take $y_1:=c_{11}$ and $y_2:=c'_{(2k-3)k}$ to be the two endpoints of $P$.

To complete the definition of $P$, it is left to assign the vertices in $A_i,A_i'$ and $B_i$ as subsets of $\cX \cup U\cup V \cup W$. For this purpose
we consider the following labelling 
\[\begin{aligned}
    \cX &:= \{x_1,\ldots,x_{k-1}\}, \\
    U &:=\{u_{ij}\colon1\leq i, j \leq k-1~\text{ and }~i \bmod (k-1)\notin\{ j,j-1\} \},\\ 
    W &:=\{w_{ij}\colon k\leq i, j \leq 2k-3~\text{ and }~i\neq j \},\\ 
    V&:=\{v_{ij}\colon 1\leq i \leq k-1~\text{ and }~k \leq j \leq 2k-3\}.
\end{aligned}\]
We assign the vertices as follows. For $1\leq i \leq k-1$ we define 
\[\begin{aligned}
    A_i&:= \{x_i\}\cup\{u_{i\ell}\colon1\leq \ell \leq k-1~\text{ and }~\ell \bmod (k-1) \notin \{i,i+1\} \}, \\
    A_i'&:=\{v_{i\ell}\colon k \leq \ell \leq 2k-3\},\\
    B_i&:=\{u_{\ell i}\colon1\leq \ell \leq k-1~\text{ and }~\ell \bmod (k-1) \notin \{i-1,i\}  \}.
\end{aligned}\]
On the other hand{,} for $k\leq i \leq 2k-3$, we assign the vertices in the following way:
\[\begin{aligned}
    A_i&:= \{v_{(i-k+1)i}\}\cup\{ w_{i\ell}: k\leq \ell \leq 2k-3~\text{ and }~\ell \neq i\}, \\
    A_i'&:=\{v_{\ell i}\colon 1 \leq \ell \leq k-1~\text{ and }~\ell \neq i-k+1)\}\qquad \mbox{ and }\\
    B_i&:=\{w_{\ell i}\colon k\leq \ell \leq 2k-3~\text{ and }~\ell \ne i \}.  
    \end{aligned}\]
This finishes the definition of $P$.    
 Examples of these vertex set assignments are shown in Figure~\ref{fig:assignment}. When $k=3$, the sets $B_i$ are empty which simplifies the situation considerably. The resulting hypergraph in this 
  case is shown in Figure \ref{fig:three_unif}.

\begin{figure}
    \centering
  \includegraphics[scale=0.7]{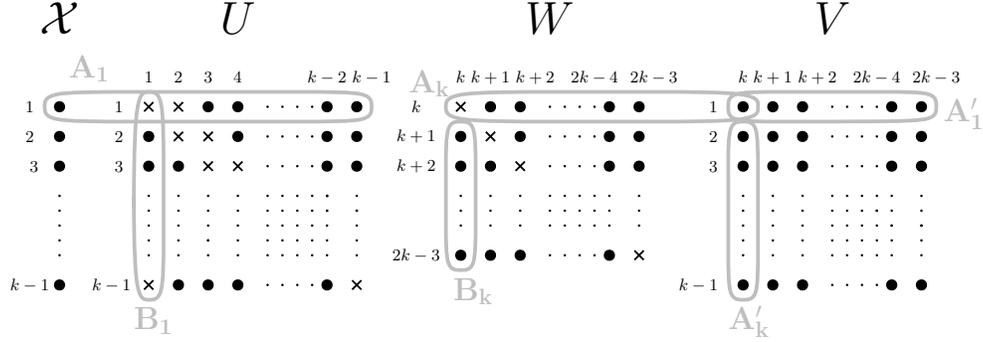}
    \caption{Assigning the vertices.}
    \label{fig:assignment}
\end{figure}

Let us now establish the claimed properties. Firstly, it is easily verified that $V(P)$ has the required size and  we now show that $P$ is linear. Note that for a fixed $i\in[2k-3]$, the vertices that appear in the absorbing gadget $P_i$ are all distinct. 
One can easily check that~$P_i$ is linear and the edges $h_i$ intersect two distinct gadgets in one vertex each. 
Therefore it suffices to establish that if $e\in E(P_i)$ and  $f\in E(P_{j})$, for $i\neq j$ then $|e\cap f|\leq 1$. 
If $i,j\in[k-1]$, $i\notin \{j-1,j+1\} \bmod(k-1)$, then $V(P_i)\cap V(P_{j})=\{u_{ij},u_{ji}\}$ and these vertices do not lie in the same edge, neither in $P_i$ nor in~$P_{j}$. 
If  $i\in \{j-1,j+1\} \bmod(k-1)$, then the situation is even simpler as $|V(P_i)\cap V(P_{j})|\leq 1$. 
In a similar fashion, we have that if $k\leq i,j\leq 2k-3$ then $V(P_i)\cap V(P_{j})=\{w_{ij},w_{ji}\}$ and neither $P_i$ nor~$P_{j}$ have an edge which contains both of these vertices. 
Finally if $1\leq i \leq k-1$ and $k\leq j\leq 2k-3$, then $V(P_i)\cap V(P_{j})=\{v_{ij}\}$.

We now verify  that $(P,\cX,y_1,y_2)$ defines a path absorber. 
Consider traversing the outer loose path of $P_i$ for $1\leq i\leq k-1$ and the inner loose path for $k\leq i \leq 2k-3$ as well as the edges~$h_i$ which link the absorbing gadgets. 
This gives a loose path from $y_1$ to $y_2$, which covers all the vertices exactly once. 
Indeed, when we traverse the outer loose paths in the first $k-1$ absorbing gadgets we cover all of $U$, $V$  and 
$\cX$ exactly once whilst avoiding the $B_i$ and thus avoiding taking any $u\in U$ more than once. 
Further, taking the inner loose paths for $k\leq i \leq 2k-3$ guarantees that we cover~$W$ and do not repeat any vertices of $V$ in the process.  
Similarly, if we now consider taking the inner loose path for all the absorbing gadgets $P_i$ with $1\leq i\leq k-1$ and the outer loose path for all 
absorbing gadgets $P_i$ with $k\leq i \leq 2k-3$, one can see that we obtain a loose path from $y_1$ to $y_2$ on $P$ which covers exactly the vertices $V(P)\setminus \cX$.

Finally let us turn to the degeneracy of $(P,\cX)$. We consider the following edge exposure. We reveal all the $e_i$ for $i=1,2,\ldots,2k-3$ first. 
This guarantees that all the edges with root vertices are revealed first. 
We then reveal $e'_i$ for  $i=1,2,\ldots,2k-3$. 
Each $e'_i$ has weight at most {$k-1$}. Indeed, as we add $e_i'$ the vertex $c_{i2}'$ contributes nothing to the weight, whilst all other vertices contribute at most one. 
We then reveal $g_i$ for each $i=1,2,\ldots,2k-3$. Again we can conclude that the weight of $g_i$ 
according to this edge exposure is at most $k-1$ as no edges containing $c_{ik}$ or $c'_{i1}$ have been revealed yet and so they contribute nothing to the weight, 
whilst $c_{i*}$ contributes two and all other vertices contribute at most one. Indeed all other vertices in the edge have degree two and so can contribute no more than one to the weight. 
We can then reveal  $f_i$ and $f_i'$ for $i=1,2,\ldots,2k-3$, each of which has weight two, given by the two vertices in the edge which have previously featured. 
Finally, we reveal $h_i$ for $i=1,2,\ldots,2k-4$ which again has weight two given by its two degree two vertices. This gives an edge exposure with degeneracy $k-1$.

\begin{figure}
    \centering
  \includegraphics[scale=.8]{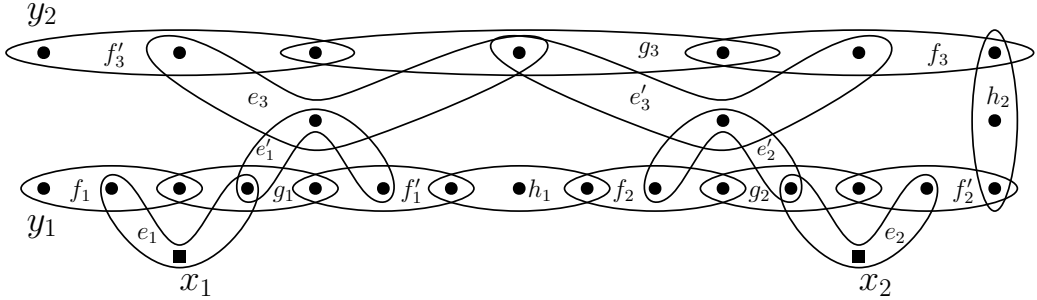}
    \caption{A three-uniform path absorber.}
    \label{fig:three_unif}
\end{figure}
\end{proof}

\subsection{The template}\label{sec:template}

We look to find many (path) absorbers in our host graph and 
the relative positions of these absorbers will be determined by an auxiliary hypergraph which we call a template.  

\begin{definition} \label{templatedefn}
An $(r,m)$-\emph{template}  $T$ is an $r$-uniform, $r$-partite  hypergraph  with vertex parts $V(T)=Y_0\cup Y_1 \cup \ldots \cup Y_{r-1}$ 
of sizes  $|Y_0|=4m$, $|Y_1|=\dots=|Y_{r-1}|=3m$ such that the following holds. There exists a subset $Z\subset Y_0$, called the \emph{flexible set}, of size $|Z|=2m$ with the property that for any ${Z}'\subset Z$ of size $m$ the induced hypergraph 
$T[V\setminus{Z}']$ has a perfect matching.
\end{definition}
 As $T$ is $r$-uniform and $r$-partite  there is no confusion in considering edges of $T$ as sets or as ordered tuples and we will switch between these viewpoints throughout.
 For large enough $m$ the existence of a $(2,m)$-template 
  with maximum vertex-degree $\Delta_1(T)\leq 40$ was  proven by Montgomery~\cite{M14a,montgomery2019spanning} using a probabilistic argument. 
  These easily extend to $(r,m)$-templates by taking $r-2$ disjoint copies $Y_2,\dots,Y_{r-1}$ of $Y_1$ and adding to each edge  $ab\in (Y_0\times Y_1)\cap E(T)$   the copies  
$b^i\in Y_i$ of $b\in Y_1$, $i\in\{2,\dots,r-1\}$ to make  it $r$-uniform. This yields the following.


\begin{lemma} \label{lem:template}
For an  $r\geq 2$  and large enough $m$ there is an $(r,m)$-template $T$ with $\Delta_1(T)\leq 40$.\qed
\end{lemma}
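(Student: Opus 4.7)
The plan is to reduce everything to the $r=2$ case, which is Montgomery's result, and then upgrade an arbitrary template of this form to an $r$-uniform one by stacking disjoint copies of the part $Y_1$. To start, I would invoke Montgomery's probabilistic construction \cite{M14a,montgomery2019spanning} to obtain a $(2,m)$-template $T'$ on a bipartite vertex set $Y_0\cup Y_1$ with $|Y_0|=4m$ and $|Y_1|=3m$, a flexible subset $Z\subseteq Y_0$ of size $2m$, and maximum degree $\Delta_1(T')\leq 40$. By definition, for every $Z'\subseteq Z$ with $|Z'|=m$ the induced graph $T'[Y_0\cup Y_1\setminus Z']$ admits a perfect matching $M'$, which necessarily consists of exactly $3m$ edges and saturates all of $Y_1$ together with the $3m$ vertices of $Y_0\setminus Z'$.

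Next, I would build the desired $(r,m)$-template $T$ by adjoining $r-2$ fresh vertex classes. Concretely, introduce pairwise disjoint sets $Y_2,\dots,Y_{r-1}$, each of size $3m$ and each equipped with a fixed bijection $b\mapsto b^i$ from $Y_1$ to $Y_i$. For every edge $\{a,b\}\in E(T')$ with $a\in Y_0$ and $b\in Y_1$, place the $r$-set
\[
\{a,b,b^{2},\dots,b^{r-1}\}
\]
into $E(T)$. By construction $T$ is $r$-uniform and $r$-partite with the prescribed part sizes, and the map $E(T')\to E(T)$ is a bijection.

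The final step is to verify the two template properties. Each vertex of $Y_0\cup Y_1$ inherits its degree from $T'$, while each $b^i\in Y_i$ with $i\geq 2$ has degree equal to $\deg_{T'}(b)$; hence $\Delta_1(T)=\Delta_1(T')\leq 40$. For the flexibility property, fix any $Z'\subseteq Z$ with $|Z'|=m$ and let $M'\subseteq E(T')$ be a perfect matching of $T'[Y_0\cup Y_1\setminus Z']$. Replacing every edge $\{a,b\}\in M'$ by its $r$-uniform extension $\{a,b,b^2,\dots,b^{r-1}\}$ yields a matching $M\subseteq E(T)$ of size $3m$; as $b$ ranges over $Y_1$ the vertices $b^i$ exhaust each $Y_i$, so $M$ covers $V(T)\setminus Z'$ perfectly. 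The only nontrivial ingredient is Montgomery's bipartite construction, and I do not foresee any real obstacle in the extension step beyond this accounting.
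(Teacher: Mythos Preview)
Your argument is correct and is exactly the approach indicated in the paper: reduce to Montgomery's $(2,m)$-template and then extend each edge $\{a,b\}$ by adjoining the clones $b^2,\dots,b^{r-1}$ from $r-2$ disjoint copies of $Y_1$. The degree and flexibility checks you give are precisely the routine verifications the paper leaves implicit.
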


Templates $T$  are defined so that they are flexible with respect to perfect matchings in $T$, reflected by the existence of the flexible set $Z\subset V(T)$.
Let $T$ with $V(T)\subset V(H)$ be a suitable chosen template. By combining this  with the notions of absorbers from Section~\ref{subsec:factorabsorbers} 
and Section~\ref{subsec:pathabsorbers}, 
and that of $T$-compatible families  from  Section~\ref{subsec:compatiblefams},
one can transform $V(T)$  into a set $A\supset V(T)$ 
such that $Z\subset A$ is flexible with respect to 
 the desired spanning structure, which means that for each $Z'\subset Z$ of size $m$ the induced $k$-graph 
 $H[A\setminus Z']$ contains an $F$-factor or {a} spanning loose {path} with end vertices independent of $Z'$. 
 This is the key property in our proof of the main theorems.

\section{Proof of Theorem~\ref{thm:Ftilings} and Theorem~\ref{thm:hamcyc}}\label{sec:proofthms}
We prove both theorems,  Theorem~\ref{thm:Ftilings} and Theorem~\ref{thm:hamcyc}, at once following the outline from  Section~\ref{sec:outline}. 
The proof essentially consists of two steps, encapsulated by Claim~\ref{claim:2} and Claim~\ref{claim:3}.
The first, Claim~\ref{claim:2}, concerns the construction of the flexible and absorbing sets $Z\subset A\subset V(H)$  as explained in
the outline given in Section~\ref{sec:outline} and in the last paragraph of the previous section. 
The second, Claim~\ref{claim:3}, then makes use of the flexibility of $Z$ to construct a spanning structure in $H[(V\setminus A)\cup Z']$ for some suitable $Z'\subset Z$.  
Combining this with 
the spanning structure in $H[A\setminus Z']$, which exists by the flexibility of~$Z$, we obtain the desired spanning structure of~$H$. 

Before delving into the proof let us first
expand the outline of the construction of the absorbing set from Claim~\ref{claim:2} by some technical details
and explain the role that  Claim~\ref{claim:1} {plays}.
To be able to make use of the flexible set $Z\subset V(H)$ we require 
that essentially all vertices have high degree into $Z$,  including those in $Z$. 
Due to the necessity to connect paths we require slightly more  in the {Hamilton cycle case}, 
namely that these vertices have high degree, e.g., 
into sets  $Z_1$ and $Z_2$ of a partition of $Z$ and to another set $W$, which we use within the claim  to connect the many small path absorbers to one long path{.} 
One way to ensure this degree condition is to simply choose a slightly larger set and remove vertices with bad degree, of which there are few due to the pseudo-randomness. 
Using Claim~\ref{claim:1} these vertices can then be covered by a small $F$-factor or a loose {path}, respectively, giving rise to the additional  set $U$ in Claim~\ref{claim:2}. 
Claim~\ref{claim:1}
will also become handy in the proof of Claim~\ref{claim:3}. We remark that the proof for $F$-factors (Theorem~\ref{thm:Ftilings}) is simpler than that of Hamilton cycles (Theorem~\ref{thm:hamcyc}). Indeed certain technicalities that arise for Hamilton cycles (e.g.\ the sets $Z_1$ and $Z_2$ and the constant $\alpha$) can be ignored. The reader may therefore prefer to concentrate just on $F$-factors on first read.

\begin{proof}[Proofs of Theorem~\ref{thm:Ftilings} and~\ref{thm:hamcyc}]
Let $k$ and $c$ be given as in the theorems. For Theorem~\ref{thm:Ftilings} we further have the parameter $f\geq k$. 
Let $\Delta=40$ and for convenience choose $\gamma\ll\beta\ll\alpha\ll 1/\Delta,1/f$. We may also assume that $c\ll\gamma$ since it only appears in lower bounds.
Apply Lemma~\ref{lem:absorber} several times, each time with parameters $k$, $\Delta$, and $c_{\ref{lem:absorber}}=c'\ll c$, and for each
$f_{\ref{lem:absorber}}\in\{9k^2-24k+16,f^2-f\}$, and each $r_{\ref{lem:absorber}}\in\{k-1,f\}$,  to obtain $\eps'$, the minimum $\eps_{\ref{lem:absorber}}$ 
over all choices of $f_{\ref{lem:absorber}}$ and $r_{\ref{lem:absorber}}$.
Choose $\eps\ll \eps'$ and $n_0$ sufficiently large. Thus 
we may assume the following hierarchy of constants \[1/n_0\ll\eps\ll\eps'\ll c'\ll c\ll\gamma\ll \beta\ll\alpha\ll1/\Delta,1/f.\]
For Theorem~\ref{thm:Ftilings} let a linear $k$-graph $F$ on $f$ vertices be given. We define 
\[\lF:=\degen(F)+\max_{e\in E(F)}\sum_{v\in e}\deg_F(v)= \degen(F)+\Delta'(F)+k \qquad \text{and}\qquad \lham=\lham(k):=k-1
\]
We apply Lemma~\ref{lem:AF} to obtain an absorber $(A_F,\cX_F)$ of $F$
on $f^2$ vertices, of which $f$  are root vertices {such that $A_F$ has } degeneracy $\degen(A_F)=\ell_F$. 
Similarly, apply Lemma~\ref{lem:AHC3} 
to obtain a path absorber $(A_\ham,\cX_\ham,y_1,y_2)$ 
on $9k^2-23k+15$ vertices, of which $k-1$ are roots {such that $A_\ham$ has degeneracy $\lham$ .}  

Suppose that $H$ is a $(p,\eps p^\ell,
\eps)$-pseudo-random $k$-graph where $\ell=\lF$ when dealing with Theorem~\ref{thm:Ftilings} and $\ell=\lham$ when dealing
with Theorem~\ref{thm:hamcyc}. We start with the following observation which will be used to include small sets of vertices in substructures of our desired spanning structure. 
\begin{claim}\label{claim:1}
Suppose that $\hat B,\hat X\subset V(H)$ are disjoint sets, $(\hat b_1,\dots, \hat b_t)$ is an ordering of the vertices of $\hat B$, 
$|\hat X|\geq \max\{\gamma n/4, 800 (f^2+9k^2)^2 |\hat B|\}$ and $\deg(\hat b;\hat X)\geq cp|\hat X|^{k-1}$ for each $\hat b\in \hat B$. 
Then there is a set $\hat B\subseteq \hat R\subseteq \hat B\cup \hat X$ such that the following holds:
\begin{description}
\item[$F$-factors] If $\ell= \lF$, then   $H[\hat R]$ contains an $F$-factor and $|\hat R|\leq f |\hat B| +f^2$.
\item[Hamcyc] If $\ell=\lham$  then $|\hat R|= 3(t-1)(k-1)+1$ and $H[\hat R]$ contains a loose spanning path with end vertices $\hat b_1$ and  $\hat b_t$ and such that
$\hat b_1,\dots,\hat b_t$ appear  in this order in the path.
\end{description}
\end{claim}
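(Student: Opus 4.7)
The plan is to handle both cases via a greedy embedding argument driven by the rooted counting lemma (Lemma~\ref{lem:rootcount}).

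For the $F$-factor case, I will process $\hat b_1,\dots,\hat b_t$ in order, and at step $i$ find a copy of $F$ in $H$ sending one distinguished vertex $x$ of $F$ to $\hat b_i$ and the remaining $f-1$ vertices into $\hat X\setminus S$, where $S$ collects the vertices used in earlier steps. The union of these $t$ copies will be $\hat R$, giving $|\hat R|=tf=f|\hat B|\le f|\hat B|+f^2$. To apply Lemma~\ref{lem:rootcount} I need $\degen(F,(x))\le\lF$; this follows from the exposure that first reveals the $\deg_F(x)$ edges through $x$ in any order, and then reveals the remaining edges in the order witnessing $\degen(F)$. By linearity the root edges are pairwise disjoint off of $x$, so each root edge has weight at most $\deg_F(x)-1\le\Delta'(F)$; a non-root edge $e$ accumulates at most $\degen(F)$ from prior non-root edges and at most $\sum_{v\in e}\deg_F(v)=\deg_F(e)+k\le\Delta'(F)+k$ from prior root edges. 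Hence $\degen(F,(x))\le\degen(F)+\Delta'(F)+k=\lF$.

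Routine bookkeeping confirms the hypotheses of Lemma~\ref{lem:rootcount} at each step: since $|S|\le(f-1)t\le f|\hat B|\le|\hat X|/(800f)$, the candidate set $U_i:=\hat X\setminus S$ satisfies $|U_i|\ge|\hat X|/2\ge\gamma n/8$, and the degree degrades by at most $(k-1)|S|\Delta_2(H)$ which is negligible compared to $cp|U_i|^{k-1}$ (using $\Delta_2(H)<\eps pn^{k-1}$ and $\eps\ll c,\gamma$), so $\deg(\hat b_i;U_i)\ge (c/2)p|U_i|^{k-1}$. Lemma~\ref{lem:rootcount} thus supplies a rooted copy of $(F,(x))$ in $(H,(\hat b_i),U_i)$, which is added to the construction, and we iterate.

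The Ham-cyc case is handled analogously by processing the $t-1$ consecutive pairs $(\hat b_i,\hat b_{i+1})$ and embedding at step $i$ a loose $k$-uniform path $P_i$ of three edges from $\hat b_i$ to $\hat b_{i+1}$ whose $3k-4$ internal vertices lie in $\hat X\setminus S$. Writing the edges of $P_i$ as $e_1\ni\hat b_i$, $e_3\ni\hat b_{i+1}$ and the middle edge $e_2$, the exposure $(e_1,e_3,e_2)$ gives weights $0$, $0$, $2$, so the rooted degeneracy of $(P_i,(\hat b_i,\hat b_{i+1}))$ equals $2\le k-1=\lham$; Lemma~\ref{lem:rootcount} again applies, with the size and degree conditions verified exactly as above. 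Concatenating the $P_i$ (which are internally vertex-disjoint and meet only at the shared endpoints $\hat b_{i+1}$) yields a loose path on $t+(t-1)(3k-4)=3(t-1)(k-1)+1$ vertices with endpoints $\hat b_1$ and $\hat b_t$ visiting $\hat b_2,\dots,\hat b_{t-1}$ in order. The main technical hurdle is the degeneracy computation for $(F,(x))$, where one must carefully track both the root-edge and prior non-root-edge contributions to the weight of each non-root edge in the combined exposure; once this bound is in place, the rest reduces to a straightforward iterative application of Lemma~\ref{lem:rootcount}.
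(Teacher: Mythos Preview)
Your degeneracy computations for $(F,(x))$ and for the length-three loose path are correct, and your overall strategy of iteratively applying Lemma~\ref{lem:rootcount} is natural. The gap is in the ``routine bookkeeping'': the degree degradation is \emph{not} negligible in general. You bound the loss by $(k-1)|S|\Delta_2(H)$ and then invoke $\Delta_2(H)<\eps pn^{k-1}$, but that hypothesis of Lemma~\ref{lem:rootcount} is vacuous here (since $\ell\ge k-1$ forces $p\gg 1/n$, so the trivial bound $\Delta_2(H)\le n^{k-2}$ is the operative one). With $\Delta_2(H)\le n^{k-2}$ the loss is of order $|S|\,n^{k-2}\le f|\hat B|\,n^{k-2}$, whereas the target degree is of order $cp|U_i|^{k-1}\ge cp(\gamma n/8)^{k-1}$. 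Thus your greedy step only goes through when $|\hat B|=O(pn)$. The claim, however, allows $|\hat B|$ up to a constant fraction of $|\hat X|$ (hence of $n$), and it is applied with $|\hat B|=\Theta(\beta n)$ and $|\hat B|=\Theta(\gamma n)$ later in the proof; for $p=o(1)$ your argument breaks down in those applications.

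This is precisely why the paper routes the proof through Lemma~\ref{lem:absorber} rather than applying Lemma~\ref{lem:rootcount} directly. The proof of Lemma~\ref{lem:absorber} (see Claims~\ref{claim:Tsmall} and~\ref{claim:Tlarge} there) runs a greedy algorithm that \emph{is allowed to fail} on some edges, bounds the number of failures using pseudo-randomness, and then repairs them on a reserve set; the pure greedy argument only succeeds in the small case $e(\hat T)\le \tfrac{c^k}{4f}pn$. To fix your proof you would need to incorporate a similar two-phase mechanism, which essentially reproduces Lemma~\ref{lem:absorber}. The paper's proof of the claim simply applies Lemma~\ref{lem:absorber} to $H[\hat B\cup\hat X]$ with $T$ a perfect matching (for $F$-factors, using the absorber $(A_F,\cX_F)$) or a path (for Ham-cyc, using a length-three loose path with both ends as roots).
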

\begin{proof} 
By Fact~\ref{fact:subset} we know that  $H[\hat B\cup \hat X]$ is $\big(p,\eps' p^\ell,
\eps)$-pseudo-random.
In particular, there are $f-1$ vertices  $v_1,\dots, v_{f-1}\in \hat X$ with $\deg(v_i;\hat X)\geq \frac p2|\hat X|^{k-1}$, $i\in[f-1]$, and in the case of $F$-factors we may
add some (at most $f-1$) of them to $\hat B$ so that $|\hat B|$ is a multiple of $f$. Abusing notation slightly we denote this modified set by $\hat B$. In the Hamilton cycle case we leave 
$(\hat b_1,\dots,\hat b_t)$ unchanged.

In both cases, $\ell=\lF$ and $\ell=\lham$, we apply Lemma~\ref{lem:absorber} with 
$H_{\ref{lem:absorber}}=H[\hat B\cup \hat X]$ and  $Y_{\ref{lem:absorber}}=\hat B$.
\begin{itemize}
\item If $\ell=\lF$ then we choose\footnote{One could equally just cover with copies of $F$, avoiding the use of absorbers here but we prefer to use absorbers for brevity.} $(A_{\ref{lem:absorber}},\cX_{\ref{lem:absorber}})=(A_F,\cX_F)$ and { we fix 
$T_{\ref{lem:absorber}}$  as  an} $f$-uniform perfect matching on~$\hat B$. The lemma then implies that there is a $T_{\ref{lem:absorber}}$-compatible family $\cA_F$ 
of copies of $(A_{\ref{lem:absorber}},\cX_{\ref{lem:absorber}})$, i.e., a family of disjoint copies of $A_F$. Taking the complete $F$-factor in each of these copies yields
 an $F$-factor of $H[\hat R]=H[V(\cA_F)]$ and certainly $\hat B \subset V(\cA_F)$.
\item If $\ell=\lham$ then we let  $T_{\ref{lem:absorber}}$ be the $2$-uniform path 
with the vertex ordering $\hat b_1,\hat b_2,\dots,\hat b_t$, i.e., $\hat b_i\hat b_{i+1}\in E( T_{\ref{lem:absorber}})$ for all $i\in[t-1]$ and we let
 $(A',\cX')$ be the loose path of length\footnote{{The \emph{length} of a path is the number of edges in the path.}} three with the ends being the root vertices. Note that 
 $(A',\cX')$ has 
 edge degeneracy two, thus at most $\lham$. 
 Applying Lemma  \ref{lem:absorber} with $(A_{\ref{lem:absorber}},\cX_{\ref{lem:absorber}})=(A',\cX')$, we obtain a family $\cA'$ which consists of $t-1$ length three loose paths with ends $\hat b_i$ 
 and $\hat b_{i+1}$, $i\in[t-1]$, and which are disjoint otherwise.
 Thus, $\hat R=V(\cA')$ has size $3(t-1)(k-1)+1$ and $\cA'$ forms a spanning loose path with ends $\hat b_1$ and $\hat b_t$ in $H[\hat R]$, as required.
 \end{itemize}
\end{proof}

With this auxiliary claim proven we now turn to the construction of the absorbing set.
\begin{claim}\label{claim:2}
Fix $m:=\lceil \beta n\rceil$. 
There are vertex sets $Z_1\cup Z_2=Z\subset A \subset V(H)$ and  $U\subset V(H)$ of sizes $|Z_1|=m+ \lceil \gamma n\rceil$ and $|Z_2|=m-\lceil\gamma n\rceil$, $|A\cup U|\leq 8f^2\beta n$ 
such that 
\begin{equation}
\label{eq:highdegZ}\deg(v; Z_i)>\frac p4|Z_i|^{k-1}\quad \text{for } i=1,2\text{ and any }v\in V\setminus U
\end{equation}
and such that  $Z\subset A\cup U$ satisfies the following flexibility property:
\begin{description}
\item[$F$-factors]  If $\ell=\lF$ then for any $Z'\subset Z$ of size $m$ the subgraph $H[(A\cup U)\setminus Z']$ contains an $F$-factor.
\item[Hamcyc] If $\ell=\lham$ then there are $a_1,a_2\in A\setminus Z$ such that for any $Z'\subset Z$ of size $m$ the subgraph 
$H[(A\cup U)\setminus Z']$ contains a spanning loose {path} with ends $a_1$ and $a_2$.
\end{description}
\end{claim}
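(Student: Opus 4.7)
The plan is to build $A$ as the vertex set of a compatible family of absorbers arranged on an $(r,m)$-template $T$, with $Z = Z_1 \cup Z_2$ identified with the flexible subset of $T$. The set $U$ will contain a small number of ``exceptional'' vertices with low degree into $Z$, which will be absorbed into an auxiliary $F$-factor (resp.\ connecting loose paths) using Claim \ref{claim:1}. I first randomly select disjoint $Z_1, Z_2 \subset V(H)$ of sizes $m + \lceil \gamma n \rceil$ and $m - \lceil \gamma n \rceil$. Pseudo-randomness applied to $(B_i, Z_i, \ldots, Z_i)$ with $B_i := \{v \in V(H) : \deg(v; Z_i) \leq (p/4)|Z_i|^{k-1}\}$ forces $|B_i| \ll \beta n$, and I may perturb $Z_1, Z_2$ slightly so that $B := B_1 \cup B_2$ is disjoint from $Z$.

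Next, with $r = f$ for the $F$-factor case and $r = k-1$ for the Hamilton cycle case, I invoke Lemma \ref{lem:template} to obtain an $(r, m)$-template $T$ with $\Delta_1(T) \leq 40$ and identify $Z$ with its flexible subset. I embed the remaining vertex classes $Y_0 \setminus Z, Y_1, \ldots, Y_{r-1}$ as disjoint subsets of $V(H) \setminus (Z \cup B)$ of the prescribed sizes; a further application of pseudo-randomness shows that only a negligible set $B' \subset V(T)$ fails the degree hypothesis $\deg(v; V(H) \setminus V(T)) > c' p n^{k-1}$ required by Lemma \ref{lem:absorber}. After removing edges of $T$ touching $B'$, I apply Lemma \ref{lem:absorber} with the absorber $(A_F, \mathcal{X}_F)$ from Lemma \ref{lem:AF} or the path absorber $(A_\ham, \mathcal{X}_\ham, y_1, y_2)$ from Lemma \ref{lem:AHC3} --- whose edge degeneracies $\ell_F, \ell_\ham$ are exactly the values $\ell$ in the pseudo-random hypothesis --- obtaining a $T$-compatible family $\mathcal{A}$ with $|V(\mathcal{A})| = O(f^2 \beta n)$. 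Set $A_0 := V(\mathcal{A})$. For $F$-factors let $U$ be the vertex set of an $F$-factor covering $B \cup B'$ produced by Claim \ref{claim:1} using vertices outside $A_0 \cup Z$, and take $A := A_0$. For Hamilton cycles, order $\mathcal{A} = \{A_{e_1}, \ldots, A_{e_t}\}$ with endpoints $u_{e_i}, v_{e_i}$, and repeatedly apply Claim \ref{claim:1} to construct short loose paths linking $v_{e_i}$ to $u_{e_{i+1}}$ and routing through $B \cup B'$, placing these connecting vertices in $U$ and setting $a_1 := u_{e_1}$, $a_2 := v_{e_t}$.

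To verify flexibility, given $Z' \subset Z$ with $|Z'| = m$ the template property provides a perfect matching $M$ of $T[V(T) \setminus Z']$. For each $e \in M$ take the complete $F$-factor (resp.\ loose path) inside $A_e$, and for each $e \notin M$ take the internal one, which avoids the roots $e \subset Z'$. In the $F$-factor case, union these with the $F$-factor covering $U$ to obtain an $F$-factor of $H[(A \cup U) \setminus Z']$; in the Hamilton cycle case, concatenating the internal/complete paths via the precomputed connecting paths yields a spanning loose path of $H[(A \cup U) \setminus Z']$ with endpoints $a_1, a_2$.

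The main obstacle is orchestrating the construction so that every application of Claim \ref{claim:1} and Lemma \ref{lem:absorber} has its hypotheses satisfied simultaneously: the exceptional sets $B, B'$ must be small compared to $\beta n$, the template's size $(3r+1)m$ must respect the bound $|Y| \leq n/(200 \Delta^2 (r + f)^2)$ in Lemma \ref{lem:absorber}, and in the Hamilton case the connecting paths must be built consecutively while retaining enough fresh vertices with the right degrees for subsequent calls to Claim \ref{claim:1}. These bookkeeping requirements are precisely what the hierarchy $\varepsilon \ll \varepsilon' \ll c' \ll c \ll \gamma \ll \beta \ll \alpha \ll 1/\Delta, 1/f$ is engineered to enforce.
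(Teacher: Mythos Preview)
Your overall architecture is right, but there is a genuine gap in how you handle the exceptional set $B'\subset V(T)$. You propose to \emph{remove the edges of $T$ touching $B'$} and then apply Lemma~\ref{lem:absorber} to the trimmed template. This is fatal for the flexibility argument: the perfect matching $M$ of $T[V(T)\setminus Z']$ that you invoke later is a matching of the \emph{original} template $T$, and it may well use edges through $B'$, for which you have built no absorber copy $A_e$. Worse, a vertex $v\in V(T)\setminus B'$ all of whose $T$-edges meet $B'$ is not the root of any absorber in your family, hence $v\notin A_0=V(\cA)$; if $v\in Z$ this already contradicts $Z\subset A$. (Putting $B'$ into $U$ does not help, since $v\notin B'$.) A secondary but related issue is the perturbation step ``perturb $Z_1,Z_2$ so that $B$ is disjoint from $Z$'': changing $Z_i$ changes $B_i$, so as written this is circular.

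The paper avoids all of this by reversing the order of operations. It first picks slightly \emph{enlarged} sets $\hat Z_1,\hat Z_2,\hat Y_0,\dots,\hat Y_{r-1}$ together with a reserve set $W$, defines $B$ once and for all as the set of vertices with low degree into any of $\hat Z_1,\hat Z_2,W,V\setminus(W\cup\hat Y)$, and immediately absorbs $B$ into $U$ via Claim~\ref{claim:1}. Only then are the sets shrunk to the exact sizes $Z_i,Y_j$, all chosen disjoint from $U$. Consequently every template vertex automatically satisfies the degree hypothesis of Lemma~\ref{lem:absorber}, so the lemma applies to $T$ with no edges removed and the template's matching property is used intact. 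The set $W$ also supplies the ``fresh'' vertices for the connecting paths in the Hamilton cycle case, with the required degree into $W$ guaranteed in advance by the same preprocessing; your sketch leaves unverified that the endpoints $u_{e_i},v_{e_i}$ and the bad vertices you route through actually have good degree into whichever set you connect in.
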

\begin{proof}
As explained in the beginning of this section we need to do some preprocessing as to guarantee~\eqref{eq:highdegZ}.
Let $r=f$ in the case of $F$-factors and $r=k-1$ in the case of finding a Hamilton cycle.
Let $s=\lceil 10k f\gamma^{-k}\eps p^\ell n\rceil$.
We choose disjoint sets $\hat Z_1,\hat Z_2\subset V$ of size $|\hat Z_1|=m+ \lceil \gamma n\rceil+s$, $|\hat Z_2|=m- \lceil \gamma n\rceil+s$ and extend $\hat Z=\hat Z_1\cup\hat Z_2$  to a set $\hat Y_0$ of size
$4m+3s$. Further, we choose disjoint sets  $\hat Y_1,\dots, \hat Y_{r-1}\subset V(H)\setminus \hat Y_0$ each of size $3m+s$.
Let $\hat Y=\hat Y_0\cup\dots\cup \hat Y_{r-1}$, let $W\subset V\setminus \hat Y$ be a set of size $\alpha n$ and let
\[B=\left\{v\in V\colon \deg(v;S)< \frac{p}{2} |S|^{k-1}, \text{for some }S\in\{\hat Z_1,\hat Z_2, W,V\setminus(W\cup \hat Y)\}\right\}.\]
From the  pseudo-randomness of $H$ we infer that  $|B|\leq \eps \gamma^{-(k-1)}p^{\ell}n$ and  thus \[\deg(v;V\setminus B)\geq \deg(v)-|B|n^{k-2}>\frac c2pn^{k-1}\quad\text{for each } v\in V.\]
\begin{description}
\item[$F$-factors] For $\ell=\lF$ an application of Claim~\ref{claim:1} with $\hat B=B$, $\hat X= V\setminus B$ then yields a set $U=\hat R\supset B$ of size $|U|\leq f(|B|+f)<s$ 
such that $H[U]$ contains an $F$-factor.
\item[Hamcyc] For $\ell=\lham$ we choose $u_0, v_0\in V\setminus B$ and let $(u_0,b_1,\dots, b_{|B|},v_0)$ be an ordering of the vertices of $B\cup\{u_0,v_0\}$. 
With this ordering and $\hat X=V\setminus (B\cup\{u_0,v_0\})$ an application of Claim~\ref{claim:1} then yields  
a set $U=\hat R\supset B$ of size $|U|\leq 3k|B|<s$ such that $H[U]$ contains a spanning path with ends $u_0$ and $v_0$.
\end{description}

Consequently, from each of the sets $\hat Z_1$, $\hat Z_2$, $\hat Y_0\setminus \hat Z, \hat Y_1,\dots, \hat Y_{r-1}$,  we can remove a subset of size exactly $s$ 
to obtain sets $Z_1\cup Z_2=Z\subset Y_0, Y_1,\dots  Y_{r-1}$ with $|Z_1|=m+ \lceil \gamma n\rceil$, $|Z_2|=m- \lceil \gamma n\rceil$, $|Y_0|=4m$, $|Y_1|=\dots=|Y_{r-1}|=3m$, all disjoint from $U$.
Let $Y=Y_0\cup\dots\cup Y_{r-1}$, let  $V'=V\setminus U$. By the  definition of $B$, the fact that $B\subset U$ and noting that  $s\cdot n^{k-2}<\frac p4 (\gamma n)^{k-1}$ we have that 
\begin{equation}\label{eq:prepareY}
 \deg(v;S)>  \frac p4 |S|^{k-1} \text{ for any }S\in\{Z_1,Z_2, W,V'\setminus  (W\cup Y)\}\quad\text{ and any   }\quad v\in V'\cup\{u_0,v_0\}.
\end{equation}
In particular,  \eqref{eq:highdegZ} holds and we can now turn to the core of the proof of the claim.

\medskip

Recall that $r=f$ in the  case of $F$-factors and $r=k-1$ in Hamilton cycle case. In both cases
we first apply Lemma~\ref{lem:template} to obtain an $(r,m)$-template $T_r$ with  vertex set $Y=Y_0\cup\dots\cup Y_{r-1}$, maximum degree
 $\Delta_1(T_r)\leq \Delta$ and with the flexible set $Z\subset Y_0$. In particular, there is a perfect matching $M(Z')$ of $T_r[Y\setminus Z']$
 for each set $Z'\subset Z$ of size $m$. 
Then, apply Lemma~\ref{lem:absorber} 
with  $H_{\ref{lem:absorber}}=H[V'\setminus W]$, $Y_{\ref{lem:absorber}}=Y$,
 $T_{\ref{lem:absorber}}=T_r$ and with $(A_{\ref{lem:absorber}},\cX_{\ref{lem:absorber}})=(A_F,\cX_F)$ in the case of $F$-factors, and with 
$(A_{\ref{lem:absorber}},\cX_{\ref{lem:absorber}})=(A_{\ham},\cX_\ham)$ in the case of Hamilton cycle. Note that all the conditions of Lemma~\ref{lem:absorber} are indeed satisfied. In particular, the degeneracy of $(A_{\ref{lem:absorber}},\cX_{\ref{lem:absorber}})$ is at most $\ell$ (by our choice of $\ell_F$ and $\ell_\ham$) and the maximum 2-degree condition in Lemma~\ref{lem:absorber} is void for us here as $\ell\geq k-1$. 
This yields a $T_f$-compatible family $\cA_F=\{(A_e,e)\}_{e\in E(T_f)}$ of copies of $(A_F,\cX_F)$
with $Y\subset V(\cA)\subset V'\setminus W$ in the first case and a  $T_{k-1}$-compatible family 
$\cA_\ham=\{(A_e,e,u_e,v_e)\}_{e\in E({T_{k-1})}}$\footnote{Recall that $u_e, v_e\notin e$ and thus they are distinct for all $e\in E({T_{k-1}})$.} of copies of $(A_{\ham},\cX_\ham,y_1,y_2)$ with $Y\subset V(\cA_\ham)\subset V'\setminus W$
in the second. 
In particular, by the defining property of $(A_F,\cX_F)$  we infer that  in any $(A_e,e)\in \cA_F$
there is a complete 
$F$-factor, which covers all of $V(A_e)$, and an internal $F$-factor, which covers $V(A_e)\setminus e$. 
Similarly, in $(A_e,e,u_e,v_e)\in\cA_\ham$ there is a complete loose path,  which covers $V(A_{e})$ and an internal loose path, which covers $V(A_{e})\setminus e$,
 both with the same end vertices~$u_e$ and~$v_e$.  Moreover, being $T_r$-compatible  any two copies $A_e$ and $A_{e'}$ in $\cA_F$ 
 (in $\cA_\ham$, respectively) are disjoint if $e$ and $e'$ are.
 
Together with the flexibility of $Z\subset Y$ with respect to the template $T_f$ we now
 easily establish the flexibility of $Z\subset V(\cA_F)\cup U$, which thus concludes the proof for the case of $F$-factors by setting $A=V(\cA_F)$. 
Indeed,  let $Z'\subset Z$ of size~$m$ be given and let $M(Z')\subset E(T_f)$ denote  a perfect matching  of $Y\setminus Z' \subset A$.
 By taking a complete $F$-factor of $A_e$ if $e\in M(Z')$  while taking an internal $F$-factor of $A_e$
if $e\in E(T_f)\setminus M(Z')$ we obtain an $F$-factor of $H[A\setminus Z']$. Together with the $F$-factor of $H[U]$ and $A\cap U=\emptyset$
this yields an $F$-factor of $H[(A\cup U)\setminus Z']$, as required.

By the same argument we obtain in  the { Hamilton cycle case} that 
for any set $Z'\subset Z$ of size~$m$ there is a collection of $e(T_{k-1})+1$  loose paths {with} fixed end vertices independent of $Z'$, 
and which {span} the vertices of $H[(V(\cA_\ham)\cup U)\setminus Z']$.
 Indeed, this follows by considering  a perfect matching $M(Z')$ of $T_{k-1}[Y\setminus Z']$ for a given set $Z'\subset Z$ of size $m$ and
taking in each $(A_e,e,u_e,v_e)\in \cA_\ham$  
 the complete path if $e\in M(Z')$ and the internal path if $e\in E(T_{k-1})\setminus M(Z')$, both having end vertices $u_e$ and $v_e$. Together with   
  the loose spanning path in $H[U]$ with end vertices $u_0$ and $v_0$ we obtain
the required collection of $e(T_{k-1})+1$  loose paths. Thus, to obtain $A$ and the flexibility of $Z\subset A\cup U$ it is left to connect the end vertices of these paths to obtain {a}
long path.

Let $(A_{e_1},\dots, A_{e_t})$ be an ordering of the elements of $\cA_\ham$. 
As  the end vertices {$u_i:=u_{e_i}$} and  {$v_i:=v_{e_i}$} of $A_{e_i}$, $i\in[t]$, are all contained in $ V'$  we can make use of~\eqref{eq:prepareY} (with $S=W$) to
apply Claim~\ref{claim:1} with the ordering $(\hat b_1,\hat b_2,\dots, \hat b_{2t})=(v_0,u_1, v_1, u_2, v_2,\dots, u_t)$, where $t < \Delta_1(T_{k-1})\cdot |{Y_1}|\leq 3 \Delta m$ and
$\hat X=W$ to find a loose path connecting these vertices as given in the order. From this path we only keep the connecting paths $P_{i+1}$ between $v_i$ and $u_{i+1}$, $i=0,\dots, t-1$, discarding
 each of the loose paths between $u_i$ and $v_i$.  
 Now let $A=V(\cA_\ham)\cup V(P_1)\cup\dots\cup V(P_t)$ and let  $a_1=u_0$ and $a_2=v_t$. Together with the argument from above we conclude that
 $Z\subset A\cup U$ has the  desired flexibility property.
 \end{proof}

Using Claim \ref{claim:2}, we can now prove the following claim which will conclude the proof.

\begin{claim}\label{claim:3}
Let $V''=V\setminus (A\cup U)$ then there is a set $Z'\subset Z$ of size $m$ such that the following holds:
\begin{description}
\item[$F$-factors] If $\ell=\lF$ then $H[V''\cup Z']$ contains an $F$-factor.
\item[Hamcyc] If $\ell=\lham$ then $H[V''\cup Z'\cup\{a_1,a_2\}]$ contains a spanning loose   path with ends $a_1$ and $a_2$.
\end{description}
\end{claim}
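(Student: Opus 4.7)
We treat both cases in parallel via a three-phase scheme: (i) cover most of $V''$ by a near-spanning structure of the right type inside $V''$, leaving a small leftover $V_0\subseteq V''$; (ii) absorb $V_0$ into the structure by invoking Claim~\ref{claim:1} with $\hat X\subseteq Z_1$; and (iii) extend with vertices of $Z$ so that the number of $Z$-vertices used becomes exactly $m$. Throughout we use that (by Fact~\ref{fact:subset}) every linearly-sized subset of $V(H)$ inherits pseudo-randomness comparable to $H$, and that (by~\eqref{eq:highdegZ}) every vertex of $V\setminus U$ has high degree into each of $Z_1,Z_2$, which supplies the degree hypotheses of Claim~\ref{claim:1} and Lemma~\ref{lem:rootcount}. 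We also assume the mild divisibility $|V''|+m\equiv 0\pmod{f}$ (respectively $|V''|+m+1\equiv 0\pmod{k-1}$ in the Hamilton cycle case), which can be arranged by perturbing the set sizes in Claim~\ref{claim:2} by a constant.

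\medskip
\textbf{$F$-factor case.} In Phase~(i) we greedily pack copies of $F$ inside $H[V'']$ by iteratively applying Lemma~\ref{lem:rootcount} with $r=0$ to the uncovered portion, halting once the leftover $V_0$ is sublinear. Phase~(ii) applies Claim~\ref{claim:1} with $\hat B=V_0$ and $\hat X=Z_1$, producing an $F$-tiling of some $\hat R_1\subseteq V_0\cup Z_1$ using $t_0\leq(f-1)|V_0|+f^2$ vertices of $Z_1$. Divisibility then forces $(m-t_0)/f\in\mathbb{N}$. In Phase~(iii) we set $Z^*:=Z\setminus \hat R_1$ (of size at least $m$) and iterate Lemma~\ref{lem:rootcount} inside $H[Z^*]$ to find $(m-t_0)/f$ additional disjoint $F$-copies, bringing the total $Z$-consumption to exactly $m$. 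Taking the union of all $F$-copies found in the three phases gives an $F$-factor of $H[V''\cup Z']$ with $Z'\subseteq Z$ of size $m$.

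\medskip
\textbf{Hamilton cycle case.} In Phase~(i) we greedily build disjoint loose paths $P_1,\dots,P_s$ in $H[V'']$, each grown one edge at a time by pseudo-randomness (choosing at each step the next endpoint among the new vertices of the extending edge so that it has high degree into the remaining uncovered set), and terminate with $s+|V_0|$ sublinearly small, where $V_0$ is the uncovered set. In Phase~(ii) we iteratively apply Claim~\ref{claim:1} with $t=2$ and $\hat X\subseteq Z_1$ to link, in order, the sequence of anchors $(a_1,l_1,r_1,l_2,r_2,\dots,l_s,r_s,v_1,\dots,v_{|V_0|},a_2)$ by disjoint length-$3$ loose paths through $Z_1$ (here $(l_j,r_j)$ are the endpoints of $P_j$ and $v_i$ enumerates $V_0$); chaining these with the $P_j$'s produces a spanning loose path from $a_1$ to $a_2$ inside $V''\cup Z_1^*\cup\{a_1,a_2\}$ using some $t_0\ll m$ vertices of $Z_1$. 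In Phase~(iii) we correct the $Z$-count: remove the internals of one length-$3$ link with endpoints $b,b'$, choose intermediate anchors $c_1,\dots,c_q\in Z^*:=Z\setminus Z_1^*$, and use repeated Claim~\ref{claim:1} with $t=2$ along $(b,c_1,\dots,c_q,b')$ with $\hat X\subseteq Z^*$ to reconnect the path. The number $q$ of inserted anchors and the length of the individual connecting links (length $3$ or $4$) can be tuned so that the total $Z$-consumption is exactly $m$; this is possible thanks to the divisibility $m-t_0\equiv 0\pmod{k-1}$.

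\medskip
\textbf{Main obstacles.} The most delicate step is Phase~(i): the greedy extension must continue until the leftover $V_0$ is sublinear, which requires careful choice of the extending edge and tracking of atypical vertices (those with anomalously low degree into the shrinking uncovered set). The Hamilton cycle case further requires the loose paths to be long enough that $s+|V_0|\ll m$; this ensures Phase~(ii) only consumes a small fraction of $Z_1$, so that Claim~\ref{claim:1}'s degree hypotheses (inherited from \eqref{eq:highdegZ} after a minor loss controlled by $\Delta_2(H)\leq n^{k-2}\ll pn^{k-1}$) remain valid throughout the iteration. Phase~(iii) for Hamilton cycles amounts to a combinatorial tuning of connector lengths, which is straightforward but fiddly once the divisibility is in place.
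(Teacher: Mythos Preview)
Your $F$-factor argument is essentially the paper's: greedily tile $V''$ leaving a small leftover, absorb the leftover into $Z$ via one call to Claim~\ref{claim:1}, then adjust the count inside $Z$. This is correct.

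The Hamilton cycle argument, however, has a real gap in Phase~(ii) (and the same issue recurs in Phase~(iii)). You propose to link a sequence of $s+|V_0|+1$ anchor pairs by \emph{iterating} Claim~\ref{claim:1} with $t=2$, each time taking $\hat X$ to be the unused part of $Z_1$. After $i$ iterations you have removed $\Theta(i)$ vertices from $Z_1$, and the degree of any fixed anchor into the current $\hat X$ has dropped by up to $(k-1)\cdot\Theta(i)\cdot n^{k-2}$. Your remark that this loss is ``minor, controlled by $\Delta_2(H)\le n^{k-2}\ll pn^{k-1}$'' only bounds the loss from a \emph{single} removed vertex; the cumulative loss over $\Theta(|V_0|)$ iterations is $\Theta(|V_0|)\,n^{k-2}$, which must be compared not to $pn^{k-1}$ but to $p|Z_1|^{k-1}\asymp p(\beta n)^{k-1}$. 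Since Phase~(i) only guarantees $|V_0|$ below a small \emph{constant} fraction of $n$ (the pseudo-randomness condition becomes vacuous once the uncovered set shrinks below that scale, so ``sublinear'' is not attainable here), you would need $\gamma\ll p\beta^{k-1}$; as $\gamma,\beta$ are fixed and $p=o(1)$ this fails. The same problem arises in Phase~(iii): after $q\asymp m/(k-1)$ insertions the fixed anchor $b'$ no longer satisfies the degree hypothesis of Claim~\ref{claim:1}.

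The paper circumvents this precisely by never iterating Claim~\ref{claim:1} a growing number of times. It absorbs \emph{all} of the leftover $L$ in a \emph{single} call to Claim~\ref{claim:1} with $\hat B=L$ (this is exactly what Claim~\ref{claim:1} is designed for), producing one path $S_1$ through $L\cup Z_1$; then it builds one further maximal path $S_2$ inside $Z_1\setminus S_1$ and \emph{trims} $S_2$ to hit the right residue modulo $k-1$. This leaves only four endpoint pairs to splice (the paths in $R$, $S_1$, $S_2$, together with $a_1,a_2$), and those four connections are made through the reserved set $Z_2$; a constant number of Claim~\ref{claim:1} calls causes only an $O(n^{k-2})$ total degree loss, which is indeed negligible. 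Your scheme can be repaired along these lines: make Phase~(i) produce a single path (your endpoint-selection idea does give $s=1$), call Claim~\ref{claim:1} once on all of $V_0$, and replace the variable-length reconnection of Phase~(iii) by growing one long path in $Z$ and trimming it, so that only $O(1)$ pairwise connections remain.
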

Before proving the claim note that it  implies the theorems. Indeed, if $\ell=\lF$ then we use Claim~\ref{claim:3} and choose an $F$-factor
of $H[V''\cup Z']$, for some $Z'\subset Z$ of size $m$. Claim~\ref{claim:2} then guarantees that there is an $F$-factor
of $H[(A\cup U)\setminus Z']$ which thus yields an $F$-factor of $H$, as $V=V''\cup A \cup U$. For $\ell=\lham$ 
we take a loose Hamilton  path of $H[V''\cup Z'\cup\{a_1,a_2\}]$ with ends $a_1$ and $a_2$. By Claim~\ref{claim:2}
there is a loose Hamilton  path of $H[(A\cup U)\setminus Z']$ with the same end vertices which thus yields a Hamilton cycle of $H$.
\end{proof}

\begin{proof}[Proof of Claim~\ref{claim:3}]

 Consider first the $F$-factor case.
Let $R\subset V''$ be the largest set such that $H[R]$ contains an $F$-factor and let $L=V''\setminus R$ be the set of uncovered vertices. Suppose $|L|\geq \gamma n$, then there is 
a vertex $v\in L$ with $\deg(v; L)>\frac p2|L|^{k-1}$ and by applying Claim~\ref{claim:1}  with $\hat B=\{v\}$ and $\hat X=L\setminus \{v\}$ we  find a copy of $F$ in $L$, 
 contradicting the maximality of $R$. Thus $|L|<\gamma n$
and the claim follows (by setting $R\cup S=V''\cup Z'$) once we have shown that there is a set $S$ such that
\begin{enumerate}
\item $L\subset S\subset L\cup Z$ and $H[S]$ contains an  $F$-factor,
\item $Z\setminus S$ has size $m$.
\end{enumerate}
To find $S$ consider first  a smallest set $S_1$ which satisfies the first property.    Such a set exists since we can apply Claim~\ref{claim:1} with 
$\hat B=L$ and $\hat X=Z$, noting that the assumptions are met due to~\eqref{eq:highdegZ} and the fact that  $L\subset V\setminus U$. 
Thus $S_1$ exists and $|S_1|\leq f(|L|+f)<m$.  
Now let $S_2\subset Z\setminus S_1$ be the largest set such that $H[S_2]$ contains an $F$-factor.
By the same argument as in the previous paragraph  $|Z\setminus (S_2\cup S_1)|<\gamma n$.
Finally, due to Claim~\ref{claim:2} we have \[|V''|+m=|V|-(|A\cup U|-m)\in f\NN,\] and  
\[|V''|+2m=|V''\cup Z|=|R|+|S_1|+|S_2|+|Z\setminus  (S_1\cup S_2)|.\] 
This yields $|Z\setminus  (S_1\cup S_2)|-m\in f\NN$ and 
therefore we can remove copies of $F$ from $S_2$ to obtain $S_1\subset S\subset S_1\cup S_2$ with the required properties.


 \vspace{2mm}
  Let us now turn to considering the Hamilton loose path. Here the argument is very similar to the above but it is slightly more delicate as we have to connect the loose paths that we find to one. For this, we use the partition of $Z$ into $Z_1\cup Z_2$ as in Claim \ref{claim:2}, and carry out the argument using only vertices from $Z_1$, reserving the vertices of $Z_2$ to connect the paths in the very last step. The details follow. 
  
  In $V''$ we choose the largest set $R\subset V''$ with the property that $H[R]$ contains a loose Hamilton path with one of its end vertices, say, $a\in R$ satisfying $\deg(a;V''\setminus R)>2cpn^{k-1}$.
 Let $L=V''\setminus R$ and suppose that $|L|\geq\gamma n$. Then there is  a vertex $b\in L$ with $\deg(b;L)>4cpn^{k-1}$.
 Applying Claim~\ref{claim:1} with  $\hat B=\{a,b\}$ and with $\hat X=L\setminus\{b\}$ we  then find a path of length three in $L\cup\{a\}$ connecting $a$ and~$b$,
which thus yields a contradiction to the maximality of $R$. Thus $|L|<\gamma n$.
 Next, we claim that there is a set $S$ such that 
\begin{enumerate}\item $L\subset S\subset L\cup Z_1$ and $H[S]$ has a spanning subgraph consisting of two vertex disjoint 
loose paths,
\item  $|Z\setminus S|=m+12(k-1)-4$.
\end{enumerate}
To find $S$  consider first  a smallest set  $S_1$ with $L\subset S_1\subset L\cup Z_1$ and a largest set $S_2\subset Z_1\setminus S_1$
such that $H[S_1]$ and $H[S_2]$ both contain a Hamilton path.
Due to~\eqref{eq:highdegZ} and the fact that  $L\subset V\setminus U$ we can apply Claim~\ref{claim:1}  
with an arbitrary ordering of the vertices of $\hat B=L$ and $\hat X=Z_1$. This shows that $S_1$ exists and $|S_1|\leq 3k|L|<m-12(k-1)$. 
Further, using the same argument which was used to find $R$ above, we have that  $|Z_1\setminus S_2|<\gamma n$, thus $|Z\setminus (S_1\cup S_2)|<\gamma n+|Z_2|\le m$. 
Note that  $|S_i|\equiv1\mod (k-1)$, $i=1,2$ and the same holds for $|R|$ and also for $(|A\cup U|-m)$ due to Claim~\ref{claim:2}. With $|V|\in (k-1)\NN$ this yields
\[|V''|+2m=|V|+m-(|A\cup U|-m)\equiv m-1\mod (k-1)\] and moreover we have \[|V''|+2m=|V''\cup Z|=|R|+|S_1|+|S_2|+|Z\setminus (S_1\cup S_2)|.\]
This yields $|Z\setminus (S_1\cup S_2)|\equiv m-4\mod (k-1)$ and therefore by shortening the path in $S_2$ we can enlarge $|Z\setminus (S_1\cup S_2)|$ and thus choose a set
$S_1\subset S\subset S_1\cup S_2$ with the required properties.

Finally, let $(b_1, b_2)$, $(c_1,c_2)$ and $(d_1,d_2)$ denote the ends of a Hamilton path in $H[R]$ and the  two paths  in $H[S]$  which cover all of $S$.
Note that  these vertices are contained in $V\setminus U$.
Hence, by~\eqref{eq:highdegZ} we can apply Claim~\ref{claim:1} with $(a_2,b_1)$ and $\hat X=Z_2$  
and find a set $R_{a_2,b_1}\subset Z_2$ of size $3(k-1)-1$
which connect $a_2$ and $b_1$ by a loose path. We repeat the argument with $(b_2,c_1)$ and $\hat X=Z_2\setminus R_{a_2,b_1}$ to find 
 $3(k-1)-1$ vertices in $Z_2\setminus R_{a_2,b_1}$ to
connect $b_2$ and $c_1$ and in the same manner connect $(b_2,c_1)$, $(c_2,d_1)$ and $(d_2,a_1)$.
This yields a loose path with ends $a_1$ and $a_2$ which 
covers all but $|Z\setminus S|-12(k-1)+4=m$  vertices of $V''\cup Z$, and the claim follows.
\end{proof}

\bibliography{refs}

\end{document}